\def\blx@maxline{77}
\definecolor{blue75}{rgb}{0,0,.75}
\definecolor{green75}{rgb}{0,.75,0}
\numberwithin{equation}{section}
\newtheorem{theorem}{Theorem}[section]
\newtheorem{lemma}{Lemma}[section]
\newtheorem{remark}{Remark}[section]
\newtheorem{proposition}{Proposition}
\newtheorem{rule-def}[theorem]{Rule}
\theoremstyle{definition}
\newcommand{\cb}{\color{blue}}
\newcommand{\R}{{\mathbb R}}
\newcommand{\eps}{\varepsilon}
\newcommand{\Om}{\Omega }
\newcommand{\Sbb}{\mathbb S^{n-1}}
\date{\today} 
\begin{document}
	\title{On a mathematical model for tissue regeneration}
\author{Shimi Chettiparambil Mohanan, Nishith Mohan, and Christina Surulescu\thanks{\href{mailto:surulescu@mathematik.uni-kl.de}{surulescu@mathematik.uni-kl.de}}\\
RPTU Kaiserslautern-Landau, Department of Mathematics,\\ Postfach 3049, 67653 Kaiserslautern, 
Germany	}
	\maketitle{}
	
	\begin{abstract}
		\noindent
	We propose a PDE-ODE model for tissue regeneration, obtained by parabolic upscaling from kinetic transport equations written for the mesoscopic densities of mesenchymal stem cells and chondrocytes which evolve in an artificial scaffold impregnated with hyaluron. Due to the simple chosen turning kernels, the effective equations obtained on the macroscopic level are of the usual reaction-diffusion-taxis type. We prove global existence of solutions to the coupled macroscopic system and perform a stability and bifurcation analysis, which shows that the observed patterns  are driven by taxis. Numerical simulations illustrate the model behavior for various tactic sensitivities and initial conditions.
	\end{abstract}
	
	\section{Introduction}\label{sec:intro}
	
	With the advent of treatment paradigms speaking in favor of repairing and regenerating rather than solely surgical resection of tissues \cite{Athanasiou2009,Guo2015,Orlando2012,Pillai2018,Ude2014,Yu2015,Zreiqat2015}, mathematical modeling of regeneration processes in the context of e.g., wound healing, bone and meniscus regeneration has received increasing attention. Here we are primarily interested in the latter application and refer e.g. to \cite{Borgiani2017,WSE-H} and references therein for comprehensive reviews concerning such or closely related models; see also the model classification in \cite{grosjean_et_all}.\\[-2ex]
	
	\noindent
	Mesenchymal stem cells have been recognized as a source of cells which can be induced to differentiate into tissue repair cells like (fibro)chondrocytes \cite{Yu2015,Zellner2010}. Among these, adipose derived stem cells (ADSCs) are obtained from perivascular white adipose tissue and are relatively easy to isolate; moreover, they produce a higher yield of cells when compared to other adult stem cell sources, see e.g. \cite{Johnstone2013}. We will develop here a mathematical model to characterize the dynamics of ADSC and chondrocyte densities under the influence of extracellular matrix (ECM) including newly formed tissue and hyaluron impregnating a non-resorbable scaffold in which the cells migrate and proliferate. Unlike \cite{grosjean_et_all,Grosjean2023}, our model does not take into account fluid mechanics and tissue deformation:  we are rather interested in the patterns generated by the dynamics of involved cell populations under the said influences.   The scaffold's anisotropic structure is not addressed here - but see again \cite{grosjean_et_all}, which provides a detailed account of it. Our model here can be seen as a simplified module of that complex, more realistic setting. We also address here  the well-posedness of this simplification and prove the global existence of  solutions therefor.\\[-2ex]%, which - as we shall see in Section \ref{sec:analysis} - can nevertheless raise some mathematically interesting issues. \\[-2ex]
	
	\noindent
	The paper is organized as follows: Section \ref{sec:model-deduction} is concerned with obtaining (in an informal manner) a macroscopic formulation of cell and hyaluron \& ECM dynamics from decriptions on a lower (mesoscopic) scale, thus enabling us to provide some effective equations of reaction-diffusion-transport type, for which we perform in Section \ref{sec:analysis} a stability analysis of patterns and bifurcations, along with a global existence proof. Section \ref{sec:numerics} shows 1D and 2D numerical simulations of the PDE-ODE system obtained in Section \ref{sec:model-deduction} under various scenarios.
	
	\section{Model deduction}\label{sec:model-deduction}
	
\subsection{Mesoscopic level}
We use the kinetic theory of active particles developed by Bellomo et al. \cite{Bell-KTAP} and describe the dynamics of cell density distributions for ADSCs and chondrocytes by way of kinetic transport equations (KTEs). Let $p_1(t,x,v)$ denote the density of ADSCs sharing at time $t>0$ and position $x\in \R^n$ the velocity regime $v\in V_1=s_1\Sbb$. Likewise, $p_2(t,x,v)$ represents the density of chondrocytes with velocity $v\in V_2 =s_2 \Sbb$. The positive constants $s_1$ and $s_2$ are the speeds of ADSCs and chondrocytes, respectively. The KTEs for $p_1$ and $p_2$ then write
\begin{align}
&\partial_tp_1+\nabla _x\cdot (vp_1)=\mathcal L_1[\lambda_1 (v,h)]p_1+\text{(de)differentiation \& proliferation}\label{KTE:ADSCs}          \\
&\partial_tp_2+\nabla _x\cdot (v p_2)=\mathcal L_2[\lambda_2]p_2+\text{(de)differentiation}.\label{KTE-chondro}
\end{align}
The first terms on the right hand sides of \eqref{KTE:ADSCs} and \eqref{KTE-chondro} characterize the reorientation of ADSCs and chondrocytes, respectively. The turning operators depend on the turning rates $\lambda _1$ and $\lambda_2$, whereby the former also depends on the pathwise gradient of some ligand concentration $h$, while the latter is constant:
\begin{align}
\mathcal{L} _1[\lambda_1(v,h)]p_1(t,x,v)&:=- \int_{V_1}\lambda_1(v,h)K_1(v',v)p_1(t,x,v)dv'+\int _{V_1}\lambda_1(v',h)  K_1(v,v')p(t,x,v') dv'\notag \\
&=-\lambda (v,h)p_1(t,x,v)+\int _{V_1}\lambda_1(v',h)  K_1(v,v')p(t,x,v') dv'\label{turnop-1}\\
\mathcal{L} _2[\lambda _2]p_2(t,x,v)&:=-\lambda_2p_2(t,x,v)+\lambda_2\int_{V_2}K_2(v,v')p_2(t,x,v') dv'.\label{turnop-2}
\end{align}	
In our model $h$ represents the volume fraction of an attractant ligand (hyaluron) which impregnates the fibers of an artifical scaffold on which the cells are supposed to migrate and spread. As such, it cannot diffuse or be transported, but it can be uptaken by chondrocytes and also produced by them in a limited manner. As our model does not specifically involve dynamics of newly produced extracellular matrix (ECM), nor resorption of the artificial scaffold, we lump all tissue and hyaluron volume fractions in the macroscopic variable $h$. These assumptions lead for $h$ to the degenerate partial differential equation 
\begin{align}\label{eq:h}
\partial _th=-\gamma_1hc_2+\gamma _2\frac{c_2}{K_{c_2}+c_2},
\end{align}
with $\gamma_1,\gamma_2>0$ constants.\\[-2ex]

\noindent
For the turning kernels $K_1, K_2$ we require as usual $\int _{V_i}K_i(v,v')dv=1$ ($i=1,2$). They can be used like e.g., in \cite{Conte2021,conte2023mathematical,corbin2021modeling,dietrich2022multiscale,Engwer2,engwer2015glioma,engwer2016effective,Hunt2016,kumar2021multiscale,grosjean_et_all} to account for a heterogeneous and even anisotropic environment, which is of particular relevance when a fibrous scaffold is used to support cell migration and proliferation. Here we adopt a simplified description and consider uniform turning kernels, i.e. $K_i(v,v')=1/|V_i|$ ($i=1,2$). For the turning rate of ADSCs we choose $\lambda _1(v,h)=\lambda_0\exp(-\phi D_th)$, where $D_th=h_t+v\cdot \nabla h$ represents the pathwise gradient of hyaluron concentration $h$ and $\lambda_0,\phi >0$ are constants. This choice is in line with previous works, e.g. \cite{Othmer2002,kumar2021multiscale} related to cell migration in response to gradients of diffusing or nondiffusing cues and it typically leads to chemotactic and haptotactic behavior, respectively. With these choices the turning operators from \eqref{turnop-1}, \eqref{turnop-2} become
\begin{align}
&\mathcal{L} _1[\lambda_1(v,h)]p_1(t,x,v)=\lambda_0e^{-\phi h_t}\left (-e^{-\phi v\cdot \nabla h}p_1(t,x,v)+\frac{1}{|V_1|}\int _{V_1}e^{-\phi v'\cdot \nabla h}p_1(t,x,v')dv'\right )\notag \\
&\mathcal{L} _1[\lambda_2]p_2(t,x,v)=-\lambda_2p_2+\lambda_2\frac{1}{|V_2|}c_2=\lambda_2(\frac{1}{s_2^{n-1}|\Sbb |}c_2(t,x)-p_2(t,x,v)),
\end{align}
where $c_i(t,x):=\int _{V_i}p_i(t,x,v)dv$ ($i=1,2$) denote the macroscopic densities of ADSCs and chondrocytes, respectively. For small enough values of $\phi$ we get for $\mathcal{L} _1$ the approximation
\begin{align}
\mathcal{L} _1[\lambda_1(v,h)]p_1(t,x,v)\simeq -\lambda_0\left (1-\phi (h_t+v\cdot \nabla h)\right )p_1+\frac{\lambda_0}{|V_1|}\left (c_1-\phi \left (h_tc_1+M_1\cdot \nabla h\right )\right ),
\end{align}
where $M_1(t,x):=\int _{V_1}v p_1(t,x,v)dv$ denotes the first moment of the ADSC orientation distribution. \\[-2ex]

\noindent
Similarly to \cite{Conte2021,corbin2021modeling,engwer2016effective,grosjean_et_all} we consider the following source terms:
\begin{align*}
\mathcal S_1p_1&:=-\alpha p_1+\delta p_2+\beta p_1(1-\frac{c_1}{K_{c_1}}-\frac{c_2}{K_{c_2}}), \\
\mathcal S_2p_2&:=\alpha p_1-\delta p_2,
\end{align*}
with $\alpha >0$ denoting the differentiation rate of ADSCs to chondrocytes and $\delta >0$ the dedifferentiation rate of chondrocytes to ADSCs. The constant $\beta >0$ represents the proliferation rate of ADSCs (chondrocytes are assumed not to proliferate during the time considered here) and $K_{c_1}, K_{c_2}>0$ represent the carrying capacities of the two cell populations. %$\eps ^2$ 

\subsection{Parabolic upscaling}

\noindent
We perform a parabolic scaling of the KTEs \eqref{KTE:ADSCs}, \eqref{KTE-chondro}, i.e. we rescale the time and space variables by $\tilde t:=\eps^2t$, $\tilde x:=\eps x$. Since proliferation is much slower than migration, we also rescale as in \cite{Conte2021,corbin2021modeling,engwer2016effective,grosjean_et_all} with $\eps ^2$ the corresponding terms $\mathcal S_1p_1$, $\mathcal S_2p_2$. For notation simplification we will drop in the following the $\sim $ symbol from the scaled variables $t$ and $x$. Thus, we get:
\begin{align*}
\eps^2\partial_tp_1+\eps \nabla \cdot (vp_1)&=-\lambda_0\left (1-\phi (\eps^2h_t+\eps v\cdot \nabla h)\right )p_1+\frac{\lambda_0}{|V_1|}\left (c_1-\phi \left (\eps^2h_tc_1+\eps M_1\cdot \nabla h\right )\right )\notag \\
+&\eps ^2\left (-\alpha p_1+\delta p_2+\beta p_1(1-\frac{c_1}{K_{c_1}}-\frac{c_2}{K_{c_2}})\right )\\
\eps^2\partial_tp_2+\eps \nabla \cdot (vp_2)&=\lambda_2(\frac{1}{|V_2|}c_2-p_2)+\eps ^2\left (\alpha p_1-\delta p_2\right ).
\end{align*}
In the sequel we consider $p_i$ ($i=1,2$) to be compactly supported in the phase space $\R^n\times V_i$. Using Hilbert expansions of $p_i$ and identifying equal powers of $\eps$ we obtain:\\[-2ex]

\noindent
$\eps ^0$: 
\begin{align}
&p_{1,0}=\frac{1}{s_1^{n-1}|\Sbb |}c_{1,0}\label{ADSC:p1-c1}\\
&p_{2,0}=\frac{1}{s_2^{n-1}|\Sbb |}c_{2,0};\label{chondro:p2-c2}
\end{align}

\noindent
$\eps ^1$: 
\begin{align}
v\cdot \nabla p_{1,0}&=-\lambda_0(p_{1,1}-\phi v\cdot \nabla hp_{1,0})+\frac{\lambda_0}{|V_1|}\left (c_{1,1}-\phi M_{1,0}\cdot \nabla h\right )\notag \\
&=-\lambda_0(p_{1,1}-\phi v\cdot \nabla hp_{1,0})+\frac{\lambda_0}{|V_1|}c_{1,1}\quad \text{(by \eqref{ADSC:p1-c1})}\label{eq:eps1-p1}\\ 
v\cdot \nabla p_{2,0}&=\lambda_2(\frac{1}{|V_2|}c_{2,1}-p_{2,1}).\label{eq:eps1-p2}
\end{align}
With the notation $\mathcal L[\lambda_0]p:=\lambda_0\left (\frac{1}{|V_1|}\int _{V_1}p(v)dv-p(v)\right)$ we can rewrite \eqref{eq:eps1-p1} as
\begin{align}
\mathcal L[\lambda_0]p_{1,1}=-\lambda_0\phi \frac{1}{|V_1|}c_{1,0}v\cdot \nabla h+\nabla \cdot (v\frac{c_{1,0}}{|V_1|}).
\end{align}
Since the integral with respect to $v$ of the right hand side in the above equation vanishes, we can invert $\mathcal L[\lambda_0]$ to obtain
\begin{align}
p_{1,1}=-\frac{1}{\lambda_0}\nabla \cdot (v\frac{c_{1,0}}{|V_1|})+\phi \frac{1}{|V_1|}c_{1,0}v\cdot \nabla h.\label{eq:p11}		
\end{align}
Analogously, from \eqref{eq:eps1-p2} we obtain
\begin{align}
p_{2,1}=-\frac{1}{\lambda_2}\nabla \cdot \left (v \frac{c_{2,0}}{|V_2|}\right ).\label{eq:p21}
\end{align}

\noindent
$\eps ^2$: 
\begin{align}
\partial_tp_{1,0}+\nabla \cdot (vp_{1,1})&=-\lambda_0\left (p_{1,2}-\frac{c_{1,2}}{|V_1|}\right)\notag \\
+&\lambda _0\phi \left (h_t(p_{1,0}-\frac{c_{1,0}}{|V_1|})+v\cdot \nabla hp_{1,1}-\frac{1}{|V_1|}M_{1,1}\cdot \nabla h\right )\notag \\
-&\alpha p_{1,0}+\delta p_{2,0}+\beta p_{1,0}\left (1-\frac{c_{1,0}}{K_{c_1}}-\frac{c_{2,0}}{K_{c_2}}\right )\label{eq:eps2-p1}\\
\partial_tp_{2,0}+\nabla \cdot (vp_{2,1})&=\lambda_2(\frac{1}{|V_2|}c_{2,2}-p_{2,2})+\alpha p_{1,0}-\delta p_{2,0}.\label{eq:eps2-p2}
\end{align}
Integrating \eqref{eq:eps2-p1} with respect to $v$ and using \eqref{eq:p11}, we obtain
\begin{align}\label{eq:c10}
\partial_tc_{1,0}-\nabla \cdot \left (\frac{s_1^2}{n\lambda _0}\nabla c_{1,0}\right )+\nabla \cdot \left (\frac{s_1^2\phi }{n}c_{1,0}\nabla h\right )=-\alpha c_{1,0}+\delta c_{2,0}+\beta c_{1,0}\left (1-\frac{c_{1,0}}{K_{c_1}}-\frac{c_{2,0}}{K_{c_2}}\right )
\end{align}
\noindent
Likewise, integrating \eqref{eq:eps2-p2} with respect to $v$ and using \eqref{eq:p21} leads to 
\begin{align}\label{eq:c20}
\partial_tc_{2,0}-\nabla \cdot \left (\frac{s_2^2}{n\lambda _2}\nabla c_{2,0}\right )=\alpha c_{1,0}-\delta c_{2,0}.
\end{align}
The obtained equations \eqref{eq:c10}, \eqref{eq:c20} describe the macroscopic dynamics of ADSC and chondrocyte densities, respectively, at leading order w.r.t. $\eps\ll 1$. They are supplemented with the macroscopic equation \eqref{eq:h} for the dynamics of hyaluron/ECM density $h$.\\[-2ex]

\noindent
The above macroscopic system for ADSC and chondrocyte densities has been deduced upon considering $x\in \R^n$ and, together with \eqref{eq:h}, it has to be supplemented with appropriate initial conditions. Subsequently we will consider the dynamics to take place in a bounded, sufficiently regular domain $\Om \subset \R^n$ and endow it with no-flux boundary conditions, which can be obtained in a similar way to that presented in \cite{plaza,corbin2021modeling,dietrich2022multiscale}. We emphasize the fact that the obtained motility coefficients - although occurring in a macroscopic setting- depend on parameters originating on a lower (mesoscopic) scale: cell speeds $s_1, s_2$, turning rates $\lambda_0, \lambda _2$, and orientation bias $\phi $ towards the gradient of $h$.

	\section{Analysis of the macroscopic model}\label{sec:analysis}
%	An analysis of the steady state is conducted with a model involving linear diffusion and constant taxis. We consider the following simplified model for the further analysis: \\
Choosing $a_1:=\frac{s_1^2}{n\lambda _0}$, $a_2:=\frac{s_2^2}{n\lambda _2}$, and $b:=\frac{s_1^2\phi }{n}$ we consider the following macroscopic model for the dynamics of ADSCs, chondrocytes, and hyaluron/ECM:
	\begin{equation}
		\begin{aligned}
			\label{model}
			\partial_t c_1 &= a_1 \Delta c_1 - \nabla \cdot (b c_1 \nabla h ) - \alpha c_1 + \delta c_2+ \beta c_1 ( 1 - \frac{c_1}{K_{c_1}} - \frac{c_2}{K_{c_1}}),       ~~~~~~ & \text{in} ~~ (0,T) \times \Omega,                  \\[5pt]
			\partial_t c_2 &= a_2 \Delta c_2 + \alpha c_1 - \delta c_2,                                                        ~~~~~~  &\text{in} ~~ (0,T) \times\Omega,                  \\[5pt]
			\partial_t h &=-\gamma_1 hc_2+\gamma_2 \dfrac{c_2}{K_{c_2}+c_2},                                                       ~~~~~~  &\text{in} ~~ (0,T) \times\Omega,                  \\[5pt]
			\partial_\nu c_1 &= \partial_\nu c_2 = \partial_\nu h = 0,                                                         ~~~~~~  &\text{on} ~~ (0,T) \times\partial \Omega,         \\[5pt]
			c_1(0) &= c_1^0, \ c_2(0) = c_2^0, \ h(0) = h_0& \text{in} ~~ \Omega, 
		\end{aligned}
	\end{equation}
	\\
	where $a_1,\ a_2, \ \alpha, \ \beta,\ \gamma,\ \delta, \ b > 0 $ are all positive constants and $\Om\subset \R^n$ ($n=2,3$) is a bounded domain with sufficiently regular boundary, while $\nu $ represents the outer unit normal on $\partial \Om$.
	
	%{\cb check whether its possible to give $K_1$ as a reference quantity for $c_2$}

%	\subsection{Global existence}
	
	\subsection{Global existence}
	We consider model \eqref{model} with $x\in \Omega $ and $t>0$. 
%	\begin{equation}
%		\begin{cases}
%			\label{ndsys1}
%			\partial_t c_1 = a_1 \Delta c_1 - \nabla \cdot (b c_1 \nabla h) - \alpha c_1 + \delta c_2 + \beta c_1 \left(1 - \dfrac{c_1}{K_{c_1}} - \dfrac{c_2}{K_{c_1}}\right),  & \quad  x \in \Omega, t > 0,           \\[7pt]
%			\partial_t c_2 = a_2 \Delta c_2  - \delta c_2 + \alpha c_1,                                                                                                          & \quad  x \in \Omega, t > 0,          \\[7pt]
%			\partial_t h   = - \gamma_1 h c_2 + \gamma_2\dfrac{c_2}{K_{c_1} + c_2},                                                                                              & \quad  x \in \Omega, t > 0,          \\[7pt]
%			\partial_\nu c_1 = \partial_\nu c_2 = \partial_\nu h = 0,                                                                                                            & \quad  x \in \partial \Omega, t > 0, \\[7pt]
%			c_1(0, x) = c_1^0(x), \ c_2(0, x) = c_2^0(x), \ h(0, x) = h_0(x),                                                                                                    & \quad  x \in \Omega,
%		\end{cases}
%	\end{equation}
%	in a bounded domain $\Omega \subset \mathbb{R}^n (n = 2)$ with smooth boundary $\partial \Omega$. $\nu$ is the unit normal on $\partial \Omega$.
%	All the parameters $a_1, a_2, b, \alpha, \delta, \beta, \gamma_1$ and $\gamma_2$ are positive.
 We also assume that the initial data $(c_1^0, c_2^0, h_0)$ satisfy for some $\omega \in (0, 1)$ 
	\begin{equation}
		\begin{cases}
			\label{assu1}
			c_1^0(x) \geq 0, \ c_2^0(x) \geq 0, \  h_0(x) \geq 0,                                                                                        \\[5pt]
			c_1^0 \in C^{2 + \omega}(\bar{\Omega}),  \ c_2^0(x) \in C^{2 + \omega}(\bar{\Omega}), \ h_0(x) \in C^{2 + \omega}(\bar{\Omega}),             \\[5pt]
			\dfrac{\partial c_1^0}{\partial \nu} = \dfrac{\partial c_2^0}{\partial \nu} = \dfrac{\partial h_0}{\partial \nu} = 0 ~\text{on}~ \partial \Omega.
		\end{cases}
	\end{equation}
	The main result regarding global solvability in a 2D spatial domain is given as follows:
	\begin{theorem}
		\label{mth}
		Let $\Omega \subset \mathbb{R}^2$ be a bounded domain with smooth boundary, assume that $a_1, a_2, b, \alpha, \delta, \beta, \gamma_1$ and $\gamma_2$ are positive. Then for any $(c_1^0, c_2^0, h_0)$ satisfying \eqref{assu1} with some $\omega \in (0, 1)$, the problem \eqref{model}admits a unique global classical solution $(c_1, c_2, h) \in (C^{2, 1}(\bar{\Omega}) \times [0, \infty))^3$ for which $c_1, c_2$ and $h$ are nonnegative.
	\end{theorem}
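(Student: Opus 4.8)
The plan is to build a local-in-time classical solution together with a continuation criterion, verify nonnegativity, and then derive a chain of a priori estimates — made possible by the logistic damping in the $c_1$–equation — showing that no relevant norm can blow up in finite time, so that the solution is global. For local existence, regard $(c_1,c_2)$ as the unknowns of a semilinear parabolic system in which $\nabla h$ enters as a drift coefficient and $h$ is the explicit functional of $c_2$ obtained by integrating its ODE, $h(t,\cdot)=h_0e^{-\gamma_1\int_0^t c_2\,d\sigma}+\gamma_2\int_0^t\frac{c_2(s)}{K_{c_2}+c_2(s)}e^{-\gamma_1\int_s^t c_2\,d\sigma}\,ds$; then Amann's theory for quasilinear parabolic problems under Neumann boundary conditions (or a contraction argument in $C([0,T];C^{1+\omega}(\bar{\Omega}))^2$) yields a unique maximal classical solution on $[0,T_{\max})$, together with the criterion that $T_{\max}<\infty$ forces $\|c_1(t)\|_{L^\infty(\Omega)}+\|c_2(t)\|_{C^{1+\omega}(\bar{\Omega})}+\|h(t)\|_{C^{2+\omega}(\bar{\Omega})}\to\infty$ as $t\uparrow T_{\max}$.

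Nonnegativity is obtained from the parabolic maximum principle applied to the pair $(c_1,c_2)$ written in nondivergence form (the coefficients $b\nabla h$, $b\Delta h$, $\alpha-\beta(1-\tfrac{c_1}{K_{c_1}}-\tfrac{c_2}{K_{c_1}})$, $\delta$ being bounded as long as the classical solution exists), noting that the cross-reaction terms $\delta c_2$ and $\alpha c_1$ are nonnegative, or equivalently from a positivity-preserving version of the local iteration; the ODE representation of $h$ together with $0\le\frac{c_2}{K_{c_2}+c_2}\le 1$ then gives $h\ge 0$ and $\|h(t)\|_{L^\infty(\Omega)}\le\|h_0\|_{L^\infty(\Omega)}+\gamma_2 t$. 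Integrating the $c_1$– and $c_2$–equations over $\Omega$ and adding, the logistic contribution is at most $\tfrac{\beta}{4}K_{c_1}|\Omega|$ because $c_1(1-\tfrac{c_1}{K_{c_1}}-\tfrac{c_2}{K_{c_1}})\le c_1-\tfrac{c_1^2}{K_{c_1}}\le\tfrac{K_{c_1}}{4}$, so $c_1,c_2\in L^\infty(0,T;L^1(\Omega))$; reinserting this bound, the dissipative term $-\tfrac{\beta}{K_{c_1}}\int_\Omega c_1^2$ in the integrated $c_1$–equation yields in addition $c_1\in L^2((0,T)\times\Omega)$.

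Now the bootstrap: with $c_1\in L^2$, maximal Sobolev regularity for the heat equation $\partial_t c_2-a_2\Delta c_2=\alpha c_1-\delta c_2$ gives $c_2\in L^2(0,T;H^2(\Omega))$, and since $n=2$ the embedding $H^2(\Omega)\hookrightarrow W^{1,p}(\Omega)$ for all $p<\infty$ yields $\nabla c_2\in L^2(0,T;L^p(\Omega))$; differentiating the $h$–ODE in space, $\partial_t\nabla h=-\gamma_1 c_2\nabla h-\big(\gamma_1 h-\gamma_2\tfrac{K_{c_2}}{(K_{c_2}+c_2)^2}\big)\nabla c_2$, and integrating in $t$ gives $\|\nabla h(t)\|_{L^p(\Omega)}\le\|\nabla h_0\|_{L^p(\Omega)}+C(T)\int_0^t\|\nabla c_2(s)\|_{L^p(\Omega)}\,ds$, hence $\nabla h\in L^\infty(0,T;L^p(\Omega))$ for every $p<\infty$. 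Testing the $c_1$–equation with powers $c_1^{q-1}$, estimating the taxis contribution $b(q-1)\int_\Omega c_1^{q-1}\nabla h\cdot\nabla c_1$ by Young's and Hölder's inequalities and absorbing it into the diffusion and the dissipative $-\tfrac{\beta q}{K_{c_1}}\int_\Omega c_1^{q+1}$ term via Gagliardo--Nirenberg gives $c_1\in L^\infty(0,T;L^q(\Omega))$ for all $q<\infty$, upgraded by a Moser/Alikakos iteration to $c_1\in L^\infty((0,T)\times\Omega)$. Then $c_2$ solves a heat equation with bounded source, so $c_2\in C^{2+\theta,1+\theta/2}(\bar{\Omega}\times[0,T])$ by De Giorgi--Nash--Moser and parabolic Schauder estimates; differentiating the $h$–ODE once and twice in space propagates Hölder regularity to $\nabla h$ and $\Delta h$; and the $c_1$–equation, now a linear parabolic equation with Hölder-continuous coefficients and bounded source, gives $c_1\in C^{2+\theta,1+\theta/2}(\bar{\Omega}\times[0,T])$. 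Since every quantity in the continuation criterion remains finite on each $[0,T]$, $T<\infty$, we conclude $T_{\max}=\infty$.

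The main obstacle is that the $h$–equation is an ODE and hence provides no smoothing: $\nabla h$ and $\Delta h$ are only as regular and integrable as $\nabla c_2$ and $\Delta c_2$, which in turn are controlled only through the integrability of $c_1$; this makes the coupling genuinely circular ($c_1$ controls $c_2$, which controls $h$ and therefore $\nabla h$, which re-enters the $c_1$–equation as a drift), and it must be opened at the $L^1$ level by the logistic dissipation and then climbed exponent by exponent. It is precisely here that the restriction $n=2$ is used — through the Sobolev embeddings $H^1\hookrightarrow L^p$ and $H^2\hookrightarrow W^{1,p}$ ($p<\infty$) that power the early stages of the bootstrap, and through the fact that in two dimensions the quadratic logistic damping suffices, with no size restriction on $\beta$, to dominate the taxis-induced growth.
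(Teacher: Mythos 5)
Your proposal is essentially correct, but it takes a genuinely different route from the paper. The paper first performs the change of variables $z=c_1e^{-\frac{b}{a_1}h}$, which hides the taxis term inside the weighted divergence structure $a_1e^{-\frac{b}{a_1}h}\nabla\cdot(e^{\frac{b}{a_1}h}\nabla z)$; local existence is then a contraction for the pair $(z,h)$ in $C^{1,0}\times C(0,T;W^{1,5})$ with the weak extensibility criterion \eqref{leloc2} involving only $\|z\|_{L^\infty}$ and $\|h\|_{W^{1,5}}$, and the central a priori step is a Moser-type iteration on $\int_\Omega e^{\frac{b}{a_1}h}z^p$ that never requires a bound on $\nabla h$; the gradient estimate for $h$ (via $\Delta c_2\in L^2(Q_T)$ and the differentiated ODE) enters only at the very end, to close the $W^{1,5}$ part of the criterion. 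You instead keep $c_1$ as the unknown, establish $\nabla h\in L^\infty(0,T;L^p)$ \emph{first} (using $c_1\in L^2(Q_T)$ from the logistic dissipation, maximal regularity for $c_2$, and the differentiated $h$-ODE --- essentially the chain of the paper's Lemmas \ref{lemmal1bound}--\ref{nabhbound2} in a different order), and then absorb the taxis contribution in the $L^q$ testing of the $c_1$-equation by combining that gradient bound with the $-\frac{\beta}{K_{c_1}}\int_\Omega c_1^{q+1}$ term. Both routes break the circular coupling at the same point, namely the $L^1$/$L^2$ estimates produced by the logistic damping, and both use $n=2$ through $W^{1,2}(\Omega)\hookrightarrow L^p(\Omega)$ and $H^2(\Omega)\hookrightarrow W^{1,p}(\Omega)$. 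The substitution buys the paper an $L^\infty$ bound for the cell density that is independent of any knowledge of $\nabla h$ and a very mild blow-up criterion; your direct route avoids having to argue the equivalence of the transformed and original systems, at the price of a stronger continuation criterion and hence a longer end-stage bootstrap. Two points there need care: (i) Amann's theory does not apply verbatim, since $\nabla h$ is a nonlocal-in-time functional of $c_2$, so you should rely on the contraction alternative you mention (this is what the paper does, for $(z,h)$); (ii) a merely bounded source $\alpha c_1$ gives only $W^{2,1}_p$/$C^{1+\theta}$ regularity for $c_2$, not $C^{2+\theta,1+\theta/2}$ --- you must first obtain H\"older continuity of $c_1$ (e.g.\ De Giorgi--Nash for the divergence-form $c_1$-equation, whose flux coefficient $b\,c_1\nabla h$ is bounded at that stage), then apply Schauder to the $c_2$-equation, transfer H\"older bounds to $h$, $\nabla h$, $\Delta h$ through the ODE, and only then apply Schauder to the $c_1$-equation. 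With that reordering your argument closes.
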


	\subsubsection{Change of variable}
	
	Employing the strategy outlined in \cite{tao2009global,marciniak2010boundedness,tao2011global,tao2014energy,ke2018note,giesselmann2018existence,surulescu2021does}, we change the variables to transform the first equation of \eqref{model} in the divergence form. To this end, substitute
	\begin{equation}
		\label{cov1}
		z = c_1 e^{-\tfrac{b}{a_1} h}.
	\end{equation}
	Using \eqref{cov1} we can rewrite system \eqref{model} in the following form
	\begin{equation}
		\begin{cases}
			\label{sys1}
			\partial_t z = a_1e^{-\tfrac{b}{a_1} h} \nabla \cdot (e^{\tfrac{b}{a_1} h} \nabla z) - \alpha z + \delta c_2 e^{-\frac{b}{a_1} h}                                                                     \\[7pt]
			\hspace{3cm} + z \left(\beta\big(1 - \frac{1}{K_{c_1}}z e^{\tfrac{b}{a_1} h} - \frac{c_2}{K_{c_1}}\big) + \frac{b \gamma_1}{a_1} h c_2 - \frac{b\gamma_2 }{a_1}\frac{c_2}{K_{c_1} + c_2}\right), & x \in \Omega,\ t > 0, \\[7pt]
			\partial_t c_2 = a_2 \Delta c_2  - \delta c_2 + \alpha   z e^{\frac{b}{a_1} h}                                                          & x \in \Omega,\ t > 0,                                        \\[7pt]
			\partial_t h = -\gamma_1 h c_2 + \gamma_2\dfrac{c_2}{K_{c_1} + c_2},                                                                    & x \in \Omega,\ t > 0,                                        \\[7pt]
			\partial_\nu z = \partial_\nu c_2 = \partial_\nu h = 0,                                                                                 & x \in \partial \Omega,\ t > 0                                \\[7pt]
			z(0, x) = z_0(x) = e^{-\tfrac{b}{a_1} h_0} c_1^0, \ c_2(0, x) = c_2^0, \ h(0, x) = h_0,                                                 & x \in \Omega.
		\end{cases}
	\end{equation}
	It should be pointed out that \eqref{model} and \eqref{sys1} are equivalent within the concept of classical solutions \cite{surulescu2021does}.
	%%%%%%%%%%%%%%%%%%%%%%%%%%%%%%%%%%%%%%%%%%%%%%%%%%%%%%%%%%%%%%%%%%%%%%%%%%%%%%%%%%%%%%%%%%%%%%%%%%%%%%%%%%%%%%%%%%%%%%%%%%%%%%%%%%%%%%%%%%%%%%%%%%%%%%%%%%%%%%%%%%%%%%%%%%%%%%%%%%%%%%%%%%%%%%%%%%%%%%%%%%%%%%%
	%%%%%%%%%%%%%%%%%%%%%%%%%%%%%%%%%%%%%%%%%%%%%%%%%%%%%%%%%%%%%%%%%%%%%%%%%%%%%%%%%%%%%%%%%%%%%%%%%%%%%%%%%%%%%%%%%%%%%%%%%%%%%%%%%%%%%%%%%%%%%%%%%%%%%%%%%%%%%%%%%%%%%%%%%%%%%%%%%%%%%%%%%%%%%%%%%%%%%%%%%%%%%%%
	%%%%%%%%%%%%%%%%%%%%%%%%%%%%%%%%%%%%%%%%%%%%%%%%%%%%%%%%%%%%%%%%%%%%%%%%%%%%%%%%%%%%%%%%%%%%%%%%%%%%%%%%%%%%%%%%%%%%%%%%%%%%%%%%%%%%%%%%%%%%%%%%%%%%%%%%%%%%%%%%%%%%%%%%%%%%%%%%%%%%%%%%%%%%%%%%%%%%%%%%%%%%%%%

	\subsubsection{Local existence and extensibility criterion}
	
In the sequel we will use the following notations and conventions:
\begin{itemize}
	\item $Q_T = \Omega \times (0, T)$;
	\item we abbreviate the integrals $\int_\Omega f(x) dx$ as $\int_\Omega f(x)$;
	\item the sequentiality of the constants $ C_i, i = 1, 2, 3, \ldots$ holds only within the lemma/theorem and its proof in which the constants are used. The sequence restarts once the proof is over;
	\item $W^{2,1}_p(Q_T):=\{u :Q_T\to \R\ :\ u,\ \nabla u,\ \nabla ^2 u,\  \partial_tu\in L^p(Q_T)\}$.
\end{itemize}
		%%%%%%%%%%%%%%%%%%%%%%%%%%%%%%%%%%%%%%%%%%%%%%%%%%%%%%%%%%%%%%%%%%%%%%%%%%%%%%%%%%%%%%%%%%%%%%%%%%%%%%%%%%%%%%%%%%%%%%%%%%%%%%%%%%%%%%%%%%%%%%%%%%%%%%%%%%%%%%%%%%%%%%%%%%%%%%%%%%%%%%%%%%%%%%%%%%%%%%%%%%%
	\begin{lemma}
		\label{local_existence}
		Assume the initial data $(c_1^0, c_2^0, h_0)$ satisfy \eqref{assu1} with some $\omega \in (0, 1)$. Then, the problem \eqref{sys1} admits a unique classical solution
		\begin{equation}
			\begin{cases}
				\label{leloc1}
				z \in C^0(\bar{\Omega} \times [0, T_{\max})) \cap C^{2, 1}(\bar{\Omega} \times (0, T_{\max}))                  \\[5pt]
				c_2 \in C^0(\bar{\Omega} \times [0, T_{\max})) \cap C^{2, 1}(\bar{\Omega} \times (0, T_{\max}))                \\[5pt]
				h \in C^{2, 1}(\bar{\Omega} \times [0, T_{\max}))
			\end{cases}
		\end{equation}
		such that
		\begin{equation}
			\label{leloc2}
			\text{either}~ T_{\max} = \infty, \quad \text{or} \quad \limsup_{t \nearrow T_{\max}} \left\{\|z(\cdot, t)\|_{L^\infty(\Omega)} + \|h(\cdot, t)\|_{W^{1, 5}(\Omega)}\right\} = \infty.
		\end{equation}
		Moreover, we have $z > 0, c_2 > 0$ and $h > 0$ in $\bar{\Omega} \times (0, T_{\max})$.
	\end{lemma}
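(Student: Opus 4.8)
The plan is a Banach fixed-point argument on a short time interval, followed by a parabolic bootstrap to classical regularity, a maximum-principle argument for positivity, and a continuation argument for the blow-up alternative \eqref{leloc2}.

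First I would fix $T>0$ small and reduce \eqref{sys1} to a fixed-point equation for the pair $(z,c_2)$ alone. The $h$-equation is, for each frozen $x$, a linear scalar ODE in $t$ and can be solved explicitly:
\begin{align*}
h(x,t)=h_0(x)\,e^{-\gamma_1\int_0^t c_2(x,s)\,ds}+\gamma_2\int_0^t \frac{c_2(x,\tau)}{K_{c_1}+c_2(x,\tau)}\,e^{-\gamma_1\int_\tau^t c_2(x,s)\,ds}\,d\tau .
\end{align*}
This shows that $h=\mathcal H[c_2]$ depends continuously, and Lipschitz-continuously on bounded sets, on $c_2\in C^0(\bar\Omega\times[0,T])$, that $h\ge0$ whenever $c_2\ge0$ and $h_0\ge0$, and --- differentiating in $x$ --- that $\nabla h$ solves the linear ODE $\partial_t\nabla h=-\gamma_1(c_2\nabla h+h\nabla c_2)+\gamma_2 K_{c_1}(K_{c_1}+c_2)^{-2}\nabla c_2$, so that spatial regularity of $h$ is inherited from that of $c_2$ and $h_0$. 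Inserting $h=\mathcal H[\bar c_2]$ and a given pair $(\bar z,\bar c_2)$ into the right-hand sides (and into the coefficients $e^{\pm\frac{b}{a_1}h}$) of the first two equations of \eqref{sys1} yields two decoupled, linear, uniformly parabolic Neumann problems whose unique solutions $(z,c_2)=:\Phi(\bar z,\bar c_2)$ I would control via maximal $L^p$-regularity / analytic-semigroup estimates for the Neumann Laplacian on $\Omega\subset\R^2$. Choosing $T$ small and a suitable closed ball in $C^0(\bar\Omega\times[0,T])^2$, intersected with the cone of nonnegative functions, makes $\Phi$ a self-map and a strict contraction, producing a unique local mild solution defined on a maximal interval $[0,T_{\max})$.

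For positivity and regularity I would then argue as follows. If $\bar z,\bar c_2\ge0$, the source $\alpha\bar z e^{\frac{b}{a_1}h}$ in the $c_2$-equation is nonnegative, so $c_2\ge0$ by the parabolic maximum principle and $h=\mathcal H[\bar c_2]\ge0$; the $z$-equation carries the nonnegative term $\delta\bar c_2 e^{-\frac{b}{a_1}h}$ and is otherwise linear in the unknown $z$, hence $z\ge0$. Thus $\Phi$ preserves the nonnegative cone, the fixed point is nonnegative, and the strong maximum principle (together with $\gamma_2 c_2/(K_{c_1}+c_2)>0$ for the $h$-equation) gives $z,c_2,h>0$ in $\bar\Omega\times(0,T_{\max})$. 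To upgrade to a classical solution, the explicit formula and the $\nabla h$-ODE show $h\in C^{2,1}$ as soon as $c_2$ is $C^1$ in $x$; parabolic Schauder estimates applied to the $z$- and $c_2$-equations --- now with H\"older coefficients and bounded drift $b\,\nabla h\cdot\nabla z$ --- give $z,c_2\in C^{2,1}(\bar\Omega\times(0,T_{\max}))$, with continuity up to $t=0$ inherited from the mild formulation and \eqref{assu1}, after finitely many bootstrap steps.

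It remains to prove the extensibility criterion. Suppose $T_{\max}<\infty$ and $M:=\limsup_{t\nearrow T_{\max}}\big(\|z(\cdot,t)\|_{L^\infty(\Omega)}+\|h(\cdot,t)\|_{W^{1,5}(\Omega)}\big)<\infty$. Since $\Omega\subset\R^2$ and $5>2$, $W^{1,5}(\Omega)\hookrightarrow C^{0,\sigma}(\bar\Omega)$, so $h$ and $\nabla h$ stay bounded; consequently the source $\alpha z e^{\frac{b}{a_1}h}$ in the $c_2$-equation is bounded in $L^\infty(Q_{T_{\max}})$, and parabolic $L^p$- then Schauder estimates bound $c_2$ in $W^{2,1}_p$ for all $p<\infty$, hence in a parabolic H\"older space, uniformly on $[0,T_{\max})$. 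Feeding this back into the ODEs for $h$ and $\nabla h$ bounds $h$ in $C^{2,1}$, and then the $z$-equation is bounded in $C^{2,1}$ as well. Hence, for $t_0$ close to $T_{\max}$, the data $(z(\cdot,t_0),c_2(\cdot,t_0),h(\cdot,t_0))$ lie in a fixed bounded subset of the spaces occurring in \eqref{assu1}, so re-running the fixed-point argument from $t_0$ yields an existence time bounded below independently of $t_0$, contradicting the maximality of $T_{\max}$. The delicate point throughout is that $h$ gains no regularity from its own (purely ODE) equation, so all spatial smoothness of $h$ --- in particular the $\nabla h$-control needed for the drift term in the $z$-equation --- must be transferred from $c_2$ through the ODE; closing this loop in two space dimensions is precisely what forces the $W^{1,5}$ (more generally $W^{1,p}$ with $p>2$) norm of $h$, next to $\|z\|_{L^\infty}$, into the blow-up alternative \eqref{leloc2}.
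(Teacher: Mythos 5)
Your overall architecture (short-time Banach fixed point, positivity by comparison/maximum principles, parabolic bootstrap to classical regularity, continuation for the alternative \eqref{leloc2}) is the same type of argument as the paper's, but you iterate on $(z,c_2)$ and eliminate $h$ through the explicit ODE solution, whereas the paper iterates on $(z,h)$ measured in $C^{1,0}(\bar Q_T)\times C(0,T;W^{1,5}(\Omega))$ and treats $c_2$ as an auxiliary quantity solved first by parabolic $L^p$ theory. The trouble is that your choice of ambient space breaks the scheme at a specific step: you take the closed ball in $C^0(\bar\Omega\times[0,T])^2$, but the linear problem defining the $z$-component of $\Phi(\bar z,\bar c_2)$ carries the drift $b\,\nabla h\cdot\nabla z$ (equivalently the coefficient $e^{\frac{b}{a_1}h}$ in divergence form) with $h=\mathcal{H}[\bar c_2]$, and by your own ODE for $\nabla h$ this gradient is expressed through $\nabla\bar c_2$. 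A merely continuous $\bar c_2$ gives no control of $\nabla\bar c_2$, so the parabolic estimates you invoke for the $z$-problem are not available, the self-map bound on the drift fails, and the contraction estimate would require an inequality of the form $\|\nabla h_1-\nabla h_2\|_{L^\infty(0,T;L^5(\Omega))}\le C\,\|\bar c_{2,1}-\bar c_{2,2}\|_{C^0(\bar Q_T)}$, which is false. This is precisely the loop you flag at the end (all spatial smoothness of $h$ must be transferred from $c_2$), but the transfer cannot start from a $C^0$ ball.

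The gap is repairable rather than fatal: measure the $c_2$-component in $C^{1,0}(\bar Q_T)$ or $C(0,T;W^{1,5}(\Omega))$, so that $\mathcal{H}[\bar c_2]$ and $\nabla\mathcal{H}[\bar c_2]$ are controlled and Lipschitz in $\bar c_2$ (this is what the paper's Appendix Lemma \ref{lem_app} provides for differences), and re-derive the self-map and contraction bounds via $W^{2,1}_5(Q_T)$ maximal regularity together with the embedding $W^{2,1}_5\hookrightarrow C^{1+\sigma,\frac{1+\sigma}{2}}(\bar Q_T)$. After that repair your route is essentially the paper's with the roles of $h$ and $c_2$ in the iteration swapped: the paper's ball controls $\|h\|_{C(0,T;W^{1,5}(\Omega))}$ directly, and $c_2$ inherits $C^{1,0}$ regularity for free from its parabolic equation, whose source needs only pointwise values of $z$ and $h$. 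Two smaller points: in the continuation step, $W^{1,5}(\Omega)\hookrightarrow C^{0,\sigma}(\bar\Omega)$ bounds $h$ but not $\nabla h$ in $L^\infty$ (only in $L^5$, which does suffice for the $L^p$ estimates you need in 2D); and the paper obtains the extensibility criterion by citation of the analogous lemma in the literature rather than proving it, so your direct continuation argument would be a genuine addition once the functional setting is corrected.
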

	
	\begin{proof}
		
		Local existence for problem \eqref{sys1} is established by a fixed-point argument. Consider the Banach space $X$ of functions $(z, h)$ with norm
		\begin{equation*}
			\|(z, h)\|_X = \|z\|_{C^{1, 0}(\bar{\cb \Om} \times [0,T])} + \|h\|_{C(0, T; W^{1, 5}(\Omega))} \quad (0 < T < 1)
		\end{equation*}
		and a closed subset $S$ given by
		\begin{equation*}
			S = \left\{(z, h) \in X\ :\ z, h \geq 0,\ z(x, 0) = z_0(x),\ h(x, 0) = h_0(x),\  \frac{\partial z}{\partial \nu} = 0,\ \|(z, h)\|_X \leq M\right\}
		\end{equation*}
		with
		\begin{equation}
			\label{M}
			M = 2 \|z_0\|_{C^1(\bar{\Omega})} + 4 \|h_0\|_{W^{1, 5}(\Omega)} + 2.
		\end{equation}
		For any $(z, h) \in S$, we define a corresponding function $(\bar{z}, \bar{h}) \equiv F(z, h)$ where $(\bar{z}, \bar{h})$, along with $c_2$, satisfies the following decoupled problems
		\begin{equation}
			\label{dcop1}
			\begin{cases}
				\partial_t c_2 =  a_2 \Delta c_2 - \delta c_2 + \alpha  z e^{\frac{b}{a_1} h},         & \quad (x, t) \in Q_T,                             \\[5pt]
				\frac{\partial c_2}{\partial \nu} = 0,                                                 & \quad x \in \partial \Omega, 0 < t < T,           \\[5pt]
				c_2(x, 0) = c_2^0(x),                                                                  & \quad x \in \Omega;
			\end{cases}
		\end{equation}
		\begin{equation}
			\label{dcop2}
			\begin{cases}
				\partial_t \bar{h} = - \gamma_1 c_2 \bar{h} + \gamma_2 \frac{c_2}{K_{c_1} + c_2},                & \quad (x, t) \in Q_T,                             \\[5pt]
				\bar{h}(x, 0) = h_0(x),                                                     & \quad x \in \Omega;
			\end{cases}
		\end{equation}
		and
		\begin{equation}
			\label{dcop3}
			\begin{cases}
				\partial_t \bar{z} - a_1 \Delta \bar{z} - b \nabla h \cdot \nabla \bar{z} + g(x, t) \bar{z} = \frac{b \gamma_1}{a_1} z h c_2 + \delta c_2 e^{- \frac{b}{a_1} h} ,       & \quad (x, t) \in Q_T,                             \\[5pt]
				\frac{\partial \bar{z}}{\partial \nu} = 0,                                                                                                                       & \quad x \in \partial \Omega, 0 < t < T,           \\[5pt]
				\bar{z}(x, 0) = z_0(x),                                                                                                                                          & \quad x \in \Omega,
			\end{cases}
		\end{equation}
		where $g(x, t) = \alpha + \frac{b \gamma_2}{a_1} \frac{c_2}{K_{c_1} + c_2} - \beta\big(1 - \frac{1}{K_{c_1}}z e^{\tfrac{b}{a_1} h} - \frac{c_2}{K_{c_1}}\big)$.
		As $(z, h) \in S$, by the parabolic $L^p$-theory \cite[Theorem 2.1]{ladyzenskaja1968linear}, \eqref{dcop1} admits a unique solution $c_2$ satisfying
		\begin{equation}
			\label{c2s1}
			\|c_2\|_{W_5^{2, 1}(Q_T)} \leq C_1(M).
		\end{equation}
		By the Sobolev embedding $W_p^{2, 1}(Q_T) \hookrightarrow C^{1 + \omega, \frac{1 + \omega}{2}}(\bar{Q}_T)\ (p > 4,\ \omega = 1 - \frac{4}{p})$  \cite[Lemma II.3.3]{ladyzenskaja1968linear}, we can directly have from \eqref{c2s1} that
		\begin{equation}
			\label{c2s2}
			\|c_2\|_{C^{\frac{6}{5}, \frac{3}{5}}(\bar{Q}_T)} \leq C_2(M).
		\end{equation}
		Moreover, as $z \geq 0$ we can apply the parabolic comparison principle to \eqref{dcop1} and have
		\begin{equation}
			\label{c2s3}
			c_2 \geq 0.
		\end{equation}
		We will now turn our attention to the ODE (\ref{dcop2}), which is explicitly solvable in $Q_T$, with the unique solution being
		\begin{equation}
			\label{hsol}
			\bar{h}(x, t) = h_0(x) \exp\left(-\int_0^t \gamma_1c_2 ds\right) + \exp\left(-\int_0^t \gamma_1c_2 ds\right) \int_0^t \left\{\exp\left(\int_0^s \gamma_1 c_2 dw\right) \dfrac{\gamma_2 c_2}{K_{c_1} + c_2} \right\}ds \geq 0,
		\end{equation}
		\begin{align}
			\label{nabh}
			\nonumber \nabla  \bar{h}(x, t) = & \nabla h_0(x) \exp\left(- \int_0^t \gamma_1 c_2 ds\right) - \gamma_1 h_0(x) \exp\left(- \int_0^t \gamma_1 c_2 ds\right) \int_0^t \nabla c_2 ~ ds                                   \\[7pt]
			\nonumber & - \gamma_1 \exp\left(- \int_0^t \gamma_1 c_2 ds\right) \int_0^t \nabla c_2 ~ ds \int_0^t \left\{\exp\left(\int_0^s \gamma_1 c_2 dw\right) \dfrac{\gamma_2 c_2}{K_{c_1} + c_2} \right\}ds    \\[7pt]
			\nonumber & + \bigg\{ \int_0^t \bigg[\gamma_1 \exp\left(\int_0^s \gamma_1 c_2 dw\right) \left(\int_0^s \nabla c_2  ~ ds \right) \frac{\gamma_2 c_2}{K_{c_1} + c_2}  \\[7pt] & + \exp\left(\int_0^s \gamma_1 c_2 dw\right) \frac{\gamma_2 K_{c_1}\nabla c_2}{(K_{c_1} + c_2)^2}\bigg]ds \bigg\}
			\times  \exp\left(- \int_0^t \gamma_1 c_2 ds\right).
		\end{align}
		Note that $W^{1, 5}(\Omega) \hookrightarrow C(\bar{\Omega})$, hence $\|c_2\|_{C(\bar{Q}_T)} \leq C_3(M)$ and $\|\nabla c_2(\cdot, t)\|_{L^5(\Omega)} \leq C_4(M)$. Hence, from \eqref{hsol} and \eqref{nabh} we can have
		\begin{align}
			\label{hbound}
			\|\bar{h}(\cdot, t)\|_{L^5(\Omega)} & \leq
			\|h_0\|_{L^5(\Omega)}+\frac{\gamma _2|\Om |^{1/5}}{\gamma _1}C_3(M)\left (e^{\gamma_1 C_3(M)T} - 1\right )\notag \\
			&\le \|h_0\|_{L^5(\Omega)}+\frac{\gamma _2|\Om |^{1/5}}{\gamma _1}C_3(M)e^{\gamma_1 C_3(M)T},
			 %\|h_0\|_{L^5(\Omega)}e^{\gamma_1 C_3(M)T} + e^{\gamma_1 C_3(M)T}\int_0^T e^{\gamma_1 C_3(M) s} \gamma_2 C_3(M) ds        \\
			%\nonumber & \leq \|h_0\|_{L^5(\Omega)}e^{\gamma_1 C_3(M)T} + \frac{\gamma_2}{\gamma_1} e^{\gamma_1 C_3(M)T}\left\{e^{\gamma_1 C_3(M)T} - 1\right\}                           \\
			% & \leq \|h_0\|_{L^5(\Omega)}e^{\gamma_1 C_3(M)T} + \frac{\gamma_2}{\gamma_1} e^{2 \gamma_1 C_3(M)T},
		\end{align}
		\begin{align}
			\label{nhbound}
			 \|\nabla \bar{h}(\cdot, t)\|_{L^5(\Omega)}  \leq &\|\nabla h_0\|_{L^5(\Omega)} %e^{\gamma_1 C_3(M)T} 
			+ \gamma_1 \|h_0\|_{L^\infty(\Omega)}|\Om |^{1/5}C_4(M)T  \notag \\                                      
			%\nonumber & 			+ \gamma_1\gamma_2 e^{\gamma_1 C_3(M)T}  C_3(M)  C_4(M)T \int_0^Te^{\gamma_1 C_3(M)s}ds                                                                                                                    
			&+\gamma_2|\Om |^{1/5}C_4(M)Te^{\gamma _1C_3(M)T}\notag \\
			&+\frac{C_4(M)}{C_3(M)}\frac{\gamma_2}{\gamma_1}|\Om |^{1/5}\left (1+T\gamma_1C_3(M)e^{\gamma _1C_3(M)T}\right )\notag \\
			&+ \frac{\gamma_2}{\gamma _1}|\Om |^{1/5}\frac{C_4(M)}{C_3(M)}\frac{K_{c_1}}{(K_{c_1}+C_3(M))^2}e^{\gamma_1C_3(M)T}.
		\end{align}
		From \eqref{hbound} and \eqref{nhbound} we can have
		\begin{align}
			\label{hWbound}
			\nonumber \|\bar{h}(\cdot, t)\|_{W^{1, 5}(\Omega)} \leq & ~\|h_0\|_{W^{1, 5}(\Omega)} + \gamma_1 \|h_0\|_{L^\infty(\Omega)}C_4(M) T +\frac{\gamma_2}{\gamma_1}|\Om |^{1/5}C_3(M)e^{\gamma_1C_3T}                                 \\[5pt]
			\nonumber  
			&+\frac{C_4(M)}{C_3(M)}\frac{\gamma_2}{\gamma_1}|\Om |^{1/5}\left [1+\left (\gamma_1C_3(M)T+\frac{K_{c_1}}{(K_{c_1}+C_3(M))^2}\right )e^{\gamma_1C_3(M)T}\right ]\notag \\
%			 & + K_{c_1} \frac{\gamma_2}{\gamma_1} \frac{C_4(M)}{C_3(M)} e^{2 \gamma_1 C_3(M) T}  + K_{c_1} \frac{\gamma_2}{\gamma_1} \frac{C_4(M)}{C_3(M)} e^{\gamma_1 C_3(M) T} + 2 \gamma_2 C_4(M) Te^{2 \gamma_1 C_3(M) T}                                               \\[5pt]
%			\nonumber                                          \leq & ~ 2 \|h_0\|_{W^{1, 5}(\Omega)} + 2 \gamma_1 \|h_0\|_{L^\infty(\Omega)}C_4(M) T + 4 K_{c_1} \frac{\gamma_2 C_4(M)}{\gamma_1 C_3(M)} + 2 K_{c_1} \frac{\gamma_2 C_4(M)}{\gamma_1 C_3(M)} + 8 \gamma_2 C_4(M) T \\
			\leq & ~ \frac{M}{2}
		\end{align}
		if we take $T$ sufficiently small. %such that $C_3(M) T \leq \tfrac{\ln 2}{\gamma_1}$ and $2 \gamma_1 \|h_0\|_{L^\infty(\Omega)}C_4(M) T + 4 K_{c_1} \frac{\gamma_2 C_4(M)}{\gamma_1 C_3(M)} + 2 K_{c_1} \frac{\gamma_2 C_4(M)}{\gamma_1 C_3(M)} +  8 \gamma_2 C_4(M) T\leq 1$.
		
\noindent
		We will now consider the parabolic problem \eqref{dcop3}. From \eqref{c2s1}, \eqref{c2s2}, \eqref{c2s3}, and $(z, h) \in S$, we can conclude that
		\begin{equation}
			\label{zeq1}
			\|b \nabla h(\cdot, t)\|_{L^5(\Omega)} \leq C_5(M), \quad \|g(x, t)\|_{L^5(\Omega)} \leq C_6(M), \quad \| \tfrac{b \gamma_1}{a_1} z h c_2 + \delta c_2 e^{- \frac{b}{a_1} h} \|_{L^5(\Omega)}  \leq C_7(M).
		\end{equation}
		Applying the parabolic $L^p$-theory \cite[Theorem 2.1]{ladyzenskaja1968linear} to \eqref{dcop3} while taking into account the inequalities in \eqref{zeq1}, we obtain
		\begin{equation}
			\label{zeq2}
			\|\bar{z}\|_{W^{2, 1}_5(Q_T)} \leq C_8(M).
		\end{equation}
		This, in conjunction with the Sobolev embedding \cite[Lemma II.3.3]{ladyzenskaja1968linear}, results in
		\begin{equation}
			\label{zeq3}
			\|\bar{z}\|_{C^{\frac{6}{5}, \frac{3}{5}}(\bar{Q}_T)} \leq C_9(M).
		\end{equation}
	%	{\cb Same comment as before: the theorem is written in that reference for $p>5$}
		
	\noindent
		By following the approach in \cite[(2.21)]{pang2017global}, we can likewise arrive at
		\begin{align}
			 \|\bar{z}\|_{C^{1, 0}(\bar{Q}_T)} & \leq \|\bar{z} - z_0\|_{C^{1, 0}(\bar{Q}_T)} + \|z_0\|_{C^1(\bar{\Omega})}   \notag                \\
			                           & \leq T^\frac{3}{5}\|\bar{z}\|_{C^{1, \frac{3}{5}}(\bar{Q}_T)} + \|z_0\|_{C^1(\bar{\Omega})}    \notag       \\
			                                 & \leq T^\frac{3}{5} C_{10}(M) + \|z_0\|_{C^1(\bar{\Omega})}    	\leq \frac{M}{2}, \label{zeq4}
		\end{align}
		provided $T \in (0, 1)$ and sufficiently small, such that $T^\frac{3}{5} C_{10}(M) \leq 1$. Moreover, by $z_0(x) \geq 0$, $(z, h) \in S$, and the parabolic comparison principle, we have
		\begin{equation}
			\label{nc1}
			\bar{z} \geq 0.
		\end{equation}
		From \eqref{hsol}, \eqref{hWbound} and \eqref{zeq4}, we conclude that $(\bar{z}, \bar{h}) \in S$, provided $T$ is sufficiently small. Hence, it is established that $F$ maps $S$ into itself.
		Next, we will show that $F$ is a contraction mapping on the set $S$. 
		
		\noindent
		Let $(z_1, h_1), (z_2, h_2) \in S$ and set $(\bar{z}_1, \bar{h}_1):= F(z_1, h_1), (\bar{z}_2, \bar{h}_2):=  F(z_2, h_2)$. From \eqref{dcop1}
		\begin{equation*}
			\begin{cases}
				\partial_t (c_{21} - c_{22}) = a_2\Delta (c_{21} - c_{22}) - \delta (c_{21} - c_{22}) + \alpha (e^{\frac{b}{a_1} h_1} z_1 - e^{\frac{b}{a_1} h_2} z_2)       & \quad (x, t) \in Q_T,                             \\
				\partial_\nu(c_{21} - c_{22}) = 0,                                                                                                                 & \quad x \in \partial \Omega,\ 0 < t < T,           \\
				(c_{21} - c_{22})(x, 0) = 0,                                                                                                                       & \quad x \in \Omega,
			\end{cases}
		\end{equation*}
		where we can also have
		\begin{equation}
			\label{contrac1}
			\alpha (e^{\frac{b}{a_1} h_1} z_1 - e^{\frac{b}{a_1} h_2} z_2) \leq C_{11}(M) \left(\|z_1 - z_2\|_{C^{1, 0}(\bar{Q}_T)} + \|h_1 - h_2\|_{C(0, T; W^{1, 5}(\Omega))}\right);
		\end{equation}
		for a detailed calculation of \eqref{contrac1},  please refer to \cite{pang2017global}. For $T \in (0, 1)$, the parabolic $L^p$ theory yields the following two estimates:
		\begin{equation}
			\label{cont1}
			\|c_{21} - c_{22}\|_{W_5^{2, 1}(Q_T)} \leq C_{12}(M) \left(\|z_1 - z_2\|_{C^{1, 0}(\bar{Q}_T)} + \|h_1 - h_2\|_{C(0, T; W^{1, 5}(\Omega))}\right)
		\end{equation}
		and
		\begin{equation}
			\label{cont2}
			\|c_{21} - c_{22}\|_{C^{1, 0}(\bar{Q}_T)} \leq C_{12}(M) \left(\|z_1 - z_2\|_{C^{1, 0}(\bar{Q}_T)} + \|h_1 - h_2\|_{C(0, T; W^{1, 5}(\Omega))}\right).
		\end{equation}
		The Sobolev embedding theorem in conjunction with \eqref{lem_app} in the Appendix, then gives
		\begin{equation}
			\label{cont3}
			\|\bar{h}_1 - \bar{h}_2\|_{C(0, T; W^{1, 5(\Omega)})} \leq T C_{13}(M) (\|z_1 - z_2\|_{C^{1, 0}(\bar{Q}_T)} + \|h_1 - h_2\|_{C(0, T; W^{1, 5}(\Omega))} ).
		\end{equation}
		Consider the equation for $\bar{z}_1 - \bar{z}_2$. From \eqref{dcop3} we can have
		\begin{equation}
			\label{cont4}
			\begin{cases}
				\partial_t(\bar{z}_1 - \bar{z}_2) - a_1 \Delta (\bar{z}_1 - \bar{z}_2) - b \nabla h_1 (\bar{z}_1 - \bar{z}_2) + \tilde{f}_1(x, t)  (\bar{z}_1 - \bar{z}_2) = \tilde{f}_2(x, t)   & (x, t) \in Q_T,\\
				\partial_\nu(\bar{z}_1 - \bar{z}_2) = 0,                                                                                                                 \quad & x \in \partial \Omega, 0 < t < T,           \\
				(\bar{z}_1 - \bar{z}_2)(x, 0) = 0,                                                                                                                        \quad & x \in \Omega,
			\end{cases}
		\end{equation}
		where
		\begin{align*}
			\tilde{f}_1(x, t) = & ~ \alpha + \tfrac{b\gamma_2}{a_1} \tfrac{c_{21}}{K_{c_1} + c_{21}} - \beta (1 - \tfrac{1}{K_{c_1}} z_1 e^{\frac{b}{a_1} h_1} - \tfrac{c_{21}}{K_{c_1}})   \\[5pt]
			\tilde{f}_2(x, t) = & ~ \tfrac{b \gamma_1}{a_1} z_1 h_1 c_{21} - \tfrac{b \gamma_1}{a_1} z_2 h_2 c_{22} + \delta  e^{-\frac{b}{a_1} h_1} c_{21} - \delta  e^{-\frac{b}{a_1} h_2} c_{22} - \tfrac{b \gamma_2}{a_1} \bar{z}_2 \left(\tfrac{c_{21}}{K_{c_1} + c_{21}} - \tfrac{c_{22}}{K_{c_1} + c_{22}} \right) \\
			& - \beta \bar{z}_2 \left\{(1 -  \tfrac{1}{K_{c_1}} z_2 e^{\frac{b}{a_1} h_2} - \tfrac{c_{22}}{K_{c_1}}) - (1 -  \tfrac{1}{K_{c_1}} z_1e^{\frac{b}{a_1}h_1} - \tfrac{c_{21}}{K_{c_1}})\right\} + b \nabla h_1 \cdot \nabla \bar{z}_2 - b \nabla h_2 \cdot \nabla \bar{z}_2.
		\end{align*}
		As $(z_j, h_j) \in S (j = 1, 2)$, by \eqref{c2s2}, \eqref{zeq3}, \eqref{cont2} and \eqref{cont3}, we can have
		\begin{align*}
			& \|\tilde{f}_1\|_{L^5(Q_T)} \leq C_{14}(M),     \\[5pt]
			& \|\tilde{f}_1\|_{L^5(Q_T)} \leq C_{15}(M)(\|z_1 - z_2\|_{C^{1, 0}(\bar{Q}_T)} + \|h_1 - h_2\|_{C(0, T; W^{1, 5}(\Omega))} ).
		\end{align*}
		Again, by the parabolic $L^p$-theory in conjunction with $0 < T < 1$, we can have
		\begin{equation}
			\label{cont5}
			\|\bar{z}_1 - \bar{z}_2\|_{W_5^{2, 1}(Q_T)} \leq  C_{16}(M)(\|z_1 - z_2\|_{C^{1, 0}(\bar{Q}_T)} + \|h_1 - h_2\|_{C(0, T; W^{1, 5}(\Omega))} ),
		\end{equation}
		which together with Sobolev embedding results in
		\begin{equation}
			\label{cont6}
			\|\bar{z}_1 - \bar{z}_2\|_{C^{\frac{6}{5}, \frac{3}{5}}(\bar{Q}_T)} \leq  C_{17}(M)(\|z_1 - z_2\|_{C^{1, 0}(\bar{Q}_T)} + \|h_1 - h_2\|_{C(0, T; W^{1, 5}(\Omega))}).
		\end{equation}
		Following the approach adopted in \cite[(2.27)]{pang2017global}, we arrive at
		\begin{equation}
			\label{cont7}
			\|\bar{z}_1 - \bar{z}_2\|_{C^{1, 0}(\bar{Q}_T)} \leq T^{\frac{3}{5}}C_{17}(M) (\|z_1 - z_2\|_{C^{1, 0}(\bar{Q}_T)} + \|h_1 - h_2\|_{C(0, T; W^{1, 5}(\Omega))}).
		\end{equation}
		From \eqref{cont2} and \eqref{cont7}, we conclude that $F$ is a contraction mapping on $S$ if $T$ is sufficiently small such that $T C_{13}(M) + T^{\frac{3}{5}}C_{17}(M) \leq \tfrac{1}{2}$. Hence, by Banach's fixed point theorem, $F$ possesses a unique fixed-point $(z, h) \in S$, which in conjunction with \eqref{dcop1} implies the local existence of a solution $(z, c_2, h)$ to \eqref{sys1}.
		
	\noindent
		The maximum existence time $T_{\max}$ of the solution $(z, c_2, h)$, as given in \eqref{leloc2}, can de derived by following the same reasoning as in \cite[Lemma 2.2]{pang2017global}.
	\end{proof}

	\subsubsection{A priori estimates}
	
	\begin{lemma}
		\label{lemmhbound}
		There are some positive constants $\hat C$ and $\tilde C$ such that any solution of \eqref{sys1} satisfies
		\begin{equation}
			\label{hbound}
			h(x, t) \leq \left\{\|h_0\|_{L^\infty(\Omega)} + \frac{\widehat{C}}{\widetilde{C}}\right\} \cdot e^{\widetilde{C} t} \quad \text{for all}~ x \in \Omega~\text{and}~t \in (0, T_{\max}).
		\end{equation}
		\end{lemma}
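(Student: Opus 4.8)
The plan is to read the bound straight off the ODE governing $h$, using nothing more than the nonnegativity of $h$ and $c_2$ already furnished by Lemma~\ref{local_existence}; the estimate is essentially a one-line Gronwall argument carried out pointwise in $x$, and no a priori control on the size of $c_2$ is needed at this stage.

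First I would fix $x\in\Omega$ and view $t\mapsto h(x,t)$ as the solution of the scalar ODE
\begin{equation*}
	\frac{d}{dt}h(x,t) = -\gamma_1\, c_2(x,t)\, h(x,t) + \gamma_2\,\frac{c_2(x,t)}{K_{c_1}+c_2(x,t)},
\end{equation*}
which is legitimate since $h\in C^{2,1}(\bar{\Omega}\times[0,T_{\max}))$ makes $t\mapsto h(x,t)$ continuously differentiable on $[0,T_{\max})$. Next, invoking $h\ge 0$ and $c_2\ge 0$ from Lemma~\ref{local_existence}, the reaction term $-\gamma_1 c_2 h$ is $\le 0$ while $0\le \tfrac{c_2}{K_{c_1}+c_2}<1$, so that $\partial_t h(x,t)\le \gamma_2=:\widehat C$ for all $x\in\Omega$, $t\in(0,T_{\max})$. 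Since in addition $h\ge 0$, this upgrades trivially to the Gronwall-type inequality $\partial_t h\le \widetilde C h+\widehat C$ for any fixed $\widetilde C>0$. I would then integrate this (equivalently, compare $t\mapsto h(x,t)$ with the solution $\bar h$ of $\bar h'=\widetilde C\bar h+\widehat C$, $\bar h(0)=\|h_0\|_{L^\infty(\Omega)}\ge h_0(x)$) to arrive at
\begin{equation*}
	h(x,t)\le \Big(\|h_0\|_{L^\infty(\Omega)}+\frac{\widehat C}{\widetilde C}\Big)e^{\widetilde C t}-\frac{\widehat C}{\widetilde C}\le \Big(\|h_0\|_{L^\infty(\Omega)}+\frac{\widehat C}{\widetilde C}\Big)e^{\widetilde C t},
\end{equation*}
uniformly in $x\in\Omega$ and $t\in(0,T_{\max})$, which is precisely the claimed estimate. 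Alternatively one can integrate $\partial_t h\le\widehat C$ directly to get the sharper $h(x,t)\le\|h_0\|_{L^\infty(\Omega)}+\widehat C t$ and then use $\widetilde C t\le e^{\widetilde C t}$ to pass to the stated exponential form; I would probably keep the Gronwall phrasing, since it matches the format of the companion estimates drawn from \cite{pang2017global}.

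I do not expect any genuine obstacle here: the $h$-equation is a pointwise-in-$x$ ODE whose production term $\gamma_2 c_2/(K_{c_1}+c_2)$ is bounded by $\gamma_2$ no matter how large $c_2$ grows, and whose decay term has the favourable sign, so the bound is elementary. The only points requiring a little care are to run the argument pointwise in $x$ (so that the nonnegativity and $C^{2,1}$-regularity from Lemma~\ref{local_existence} may be used and the manipulation is a true ODE estimate rather than a formal one on $\|h(\cdot,t)\|_{L^\infty(\Omega)}$), and — if the comparison-principle formulation is preferred — to note that $\bar h$ stays nonnegative, so $\widetilde C\bar h+\widehat C\ge\widehat C=\gamma_2\ge\partial_t h$ keeps the comparison valid throughout.
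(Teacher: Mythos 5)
Your argument is correct and coincides with the paper's proof: both bound the production term $\gamma_2 c_2/(K_{c_1}+c_2)$ by a constant, drop the nonpositive $-\gamma_1 c_2 h$ term, use the nonnegativity of $h$ and $c_2$ from Lemma \ref{local_existence} to get $h_t \le \widetilde C h + \widehat C$, and conclude by an ODE comparison with the solution starting from $\|h_0\|_{L^\infty(\Omega)}$ together with $\int_0^t e^{\widetilde C(t-s)}ds \le \frac{1}{\widetilde C}e^{\widetilde C t}$. No discrepancies worth noting.
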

	\begin{proof}
		Consider the third equation of \eqref{sys1}. Since the positivity of solution component $c_2$ and $h$ has been established in Lemma \ref{local_existence}, we can find two positive constants $\widetilde{C}$ and $\widehat{C}$ such that $\tfrac{c_2}{K_{c_1} + c_2} \leq \hat{C}$, resulting in
		\begin{equation*}
			h_t \leq \widetilde{C} h + \widehat{C} \quad ~\text{in}~ \Omega \times (0, T_{\max}),
		\end{equation*}
		as in \cite[Lemma 2.2]{surulescu2021does}. Since for
		\begin{equation*}
			\tilde{h}(x, t) := \|h_0\|_{L^\infty(\Omega)} e^{\widetilde{C} t} + \widehat{C} \int_0^t e^{\widetilde{C} (t - s)}ds, \quad x \in \Omega, t \geq 0,
		\end{equation*}
		we have $\tilde{h}_t{\cb \le} \widetilde{C}{\tilde h} + \widehat{C}$ in $\Omega \times (0, \infty)$ with $\tilde{h}(\cdot, 0) = \|h_0\|_{L^\infty(\Omega)} \geq h_0$ in $\Omega$, by an ODE comparison argument we can conclude that $h \leq \tilde{h}$ in $\Omega \times (0, T_{\max})$. 
		Thus,
		\begin{equation*}
			\|h(\cdot, t)\|_{L^\infty(\Omega)} \leq \|h_0\|_{L^\infty(\Omega)} e^{\widetilde{C} t} + \widehat{C}  \int_0^t e^{\widetilde{C} (t - s)}ds, \quad \text{for all}~ t \in (0, T_{\max}).
		\end{equation*}
		With the estimate $\int_0^t e^{\widetilde{C}(t - s)} ds \leq \frac{1}{\widetilde{C}} e^{\widetilde{C} t}$ for $t \geq 0$ we then get \eqref{hbound}.
	\end{proof}
	\begin{lemma}
		\label{lemmal1bound}
		Let $T > 0$. Then there exists a $C(T) > 0$ such that
		\begin{equation}
			\label{l1b1}
			\int_\Omega c_1(\cdot, t) \leq C(T), \quad \text{for all}~ t \in (0, \widehat{T}),
		\end{equation}
		\begin{equation}
			\label{l1b2}
			\int_\Omega c_2(\cdot, t) \leq C(T) \quad \text{for all}~ t \in (0, \widehat{T}),
		\end{equation}
		and
		\begin{equation}
			\label{sqc1}
			\int_0^{\widehat{T}} \int_\Omega c_1^2 \leq C(T),
		\end{equation}
		where $\widehat{T} := \min\{T, T_{\max}\}$.
	\end{lemma}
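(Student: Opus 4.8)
The plan is to argue directly on the original system \eqref{model} in the unknowns $(c_1,c_2,h)$, which by the remark following \eqref{sys1} is equivalent, within the class of classical solutions, to \eqref{sys1}; Lemma~\ref{local_existence} thus provides on $(0,T_{\max})$ a classical, nonnegative solution whose regularity (together with the $L^\infty$ bound on $h$ from Lemma~\ref{lemmhbound}) amply justifies the integrations below, and $y(0):=\int_\Omega c_1^0+\int_\Omega c_2^0<\infty$ by \eqref{assu1}. First I would integrate the first two equations of \eqref{model} over $\Omega$: by the no-flux conditions the diffusion terms $\int_\Omega a_i\Delta c_i$ vanish, as does the taxis term $\int_\Omega\nabla\cdot(bc_1\nabla h)$ (here $\partial_\nu h=0$ is used), leaving
\[
\frac{d}{dt}\int_\Omega c_1=-\alpha\int_\Omega c_1+\delta\int_\Omega c_2+\beta\int_\Omega c_1\Big(1-\tfrac{c_1}{K_{c_1}}-\tfrac{c_2}{K_{c_1}}\Big),\qquad
\frac{d}{dt}\int_\Omega c_2=\alpha\int_\Omega c_1-\delta\int_\Omega c_2.
\]

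Adding these identities, the exchange contributions $\mp\alpha\int_\Omega c_1$ and $\pm\delta\int_\Omega c_2$ cancel exactly; writing $y(t):=\int_\Omega c_1(\cdot,t)+\int_\Omega c_2(\cdot,t)$ and discarding the nonnegative term $\tfrac{\beta}{K_{c_1}}\int_\Omega c_1 c_2$ (recall $c_1,c_2\geq0$) I obtain
\[
y'(t)\leq\beta\int_\Omega c_1-\frac{\beta}{K_{c_1}}\int_\Omega c_1^2 .
\]
By Cauchy--Schwarz $\big(\int_\Omega c_1\big)^2\leq|\Omega|\int_\Omega c_1^2$, so the right-hand side is bounded above by $\max_{s\geq0}\big(\beta s-\tfrac{\beta}{K_{c_1}|\Omega|}s^2\big)=\tfrac{\beta K_{c_1}|\Omega|}{4}=:M_0$. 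Integrating in time then gives $y(t)\leq y(0)+M_0 t\leq y(0)+M_0T$ for $t\in(0,\widehat T)$, and since $\int_\Omega c_1$ and $\int_\Omega c_2$ are each nonnegative and dominated by $y$, this yields \eqref{l1b1} and \eqref{l1b2} with $C(T):=y(0)+M_0T$.

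For \eqref{sqc1} I would instead keep the quadratic term: integrating the differential inequality above over $(0,\widehat T)$ and using $y(\widehat T)\geq0$ together with \eqref{l1b1},
\[
\frac{\beta}{K_{c_1}}\int_0^{\widehat T}\!\!\int_\Omega c_1^2\leq y(0)-y(\widehat T)+\beta\int_0^{\widehat T}\!\!\int_\Omega c_1\leq y(0)+\beta\,\widehat T\,C(T),
\]
which is the assertion after dividing by $\beta/K_{c_1}$ and enlarging $C(T)$.

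I do not expect a genuine obstacle here: this is the standard mass estimate for logistic reaction--diffusion--taxis systems. The only points deserving a line of care are the vanishing of the taxis integral (which uses $\partial_\nu h=0$), the sign of the logistic contribution (both competition terms are $\le 0$ since $c_1,c_2\geq0$, so they can only help the estimate), and the fact that the constants are allowed to grow with $T$ — it is only the finiteness of $\widehat T\le T$ that is exploited, not any dissipativity, so a plain time integration suffices and no Gronwall feedback is needed. If one prefers to work on \eqref{sys1} directly, the same computation goes through after writing $c_1=ze^{(b/a_1)h}$ and invoking the $L^\infty$ bound on $h$, but the argument on \eqref{model} is cleaner.
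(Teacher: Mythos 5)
Your argument is correct, and its skeleton coincides with the paper's: integrate the $c_1$- and $c_2$-equations over $\Omega$ (no-flux killing the diffusion and taxis contributions), add so that the $\mp\alpha\int_\Omega c_1$ and $\pm\delta\int_\Omega c_2$ exchange terms cancel, use nonnegativity to discard the cross competition term, and then recover \eqref{sqc1} by integrating the differential inequality with the quadratic term retained. Where you genuinely depart from the paper is in how the $L^1$ bound itself is closed: the paper simply drops $\frac{\beta}{K_{c_1}}\int_\Omega c_1^2$, arrives at $y'\le\beta y$ and invokes Gronwall, obtaining $y(t)\le y(0)e^{\beta\widehat T}$, whereas you keep the quadratic term, bound it from below via Cauchy--Schwarz, $\int_\Omega c_1^2\ge|\Omega|^{-1}\bigl(\int_\Omega c_1\bigr)^2$, and maximize the concave quadratic $s\mapsto\beta s-\tfrac{\beta}{K_{c_1}|\Omega|}s^2$ to get $y'\le M_0$ and hence a bound growing only linearly in $T$. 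Both routes are equally valid for the lemma as stated (all constants are allowed to depend on $T$), but your version is quantitatively sharper and avoids Gronwall altogether; the paper's is marginally shorter. One small point of care, which affects the paper's write-up equally: since the solution is only classical on $(0,T_{\max})$, quantities like $y(\widehat T)$ in your estimate for \eqref{sqc1} should be read as integrating over $(0,t)$ for $t<\widehat T$ and then letting $t\nearrow\widehat T$ (using $y\ge0$), which is immediate and does not constitute a gap.
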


	\begin{proof}
		By integrating the first and second equations of \eqref{model} with respect to space and adding the respective results we obtain
		\begin{align*}
			\dfrac{d}{dt} \left\{\int_\Omega c_1 + \int_\Omega c_2\right\} + \frac{\beta}{K_{c_1}} \int_\Omega c_1^2 = \beta \int_\Omega c_1 (1 - \tfrac{c_2}{K_{c_2}}) \quad \text{for all}~ t \in (0, \widehat{T}).
		\end{align*}
		With the positivity of the solution components $c_1$ and $c_2$ established in Lemma \ref{local_existence} we get
		\begin{align}
			\label{l1b21}
			\dfrac{d}{dt} \left\{\int_\Omega c_1 + \int_\Omega c_2\right\} + \frac{\beta}{K_{c_1}} \int_\Omega c_1^2 \leq \beta \left\{\int_\Omega c_1 + \int_\Omega c_2\right\} \quad \text{for all}~ t \in (0, \widehat{T}),
		\end{align}
		or
		\begin{align}
			\label{l1b23}
			\dfrac{d}{dt} \left\{\int_\Omega c_1 + \int_\Omega c_2\right\} \leq \beta \left\{\int_\Omega c_1 + \int_\Omega c_2\right\} \quad \text{for all}~ t \in (0, \widehat{T}).
		\end{align}
		Applying Gronwall's lemma to \eqref{l1b2} in the interval $t \in (0, \widehat{T})$ results in
		\begin{equation*}
			\int_\Omega c_1(\cdot, t) + \int_\Omega c_2(\cdot, t) \leq \left\{\int_\Omega c_1^0 + \int_\Omega c_2^0\right\} e^{\beta {\cb \hat T}}  \quad \text{for all}~ t \in (0, \widehat{T}),
		\end{equation*}
		directly yielding \eqref{l1b1} and \eqref{l1b2}. Estimate \eqref{sqc1} can be directly obtained by integrating \eqref{l1b21} on the interval $t \in (0, \widehat{T})$ and considering both \eqref{l1b1} and \eqref{l1b2}.
	\end{proof}
	
	%%%%%%%%%%%%%%%%%%%%%%%%%%%%%%%%%%%%%%%%%%%%%%%%%%%%%%%%%%%%%%%%%%%%%%%%%%%%%%%%%%%%%%%%%%%%%%%%%%%%%%%%%%%%%%%%%%%%%%%%%%%%%%%%%%%%%%%%%%%%%%%%%%%%%%%%%%%%%%%%%%%%%%%%%%%%%%%%%%%%%%%%%%%%%%%%%%%%%%%%%%%%%%%
	\begin{lemma}
		\label{lemnabc2}
		Let $T > 0$. Then there exists a $\hat{C}(T) > 0$ such that
		\begin{equation}
			\label{nabc21}
			\int_\Omega |\nabla c_2(\cdot, t)|^2 \leq \hat{C}(T) \quad \text{for all} ~ t \in (0, \widehat{T})
		\end{equation}
		as well as
		\begin{equation}
			\label{delc2}
			\int_0^{\widehat{T}}  \int_\Omega  |\Delta c_2|^2 \leq \hat{C}(T),
		\end{equation}
		where $\widehat{T} := \min\{T, T_{\max}\}$.
	\end{lemma}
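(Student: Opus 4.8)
The plan is to derive both estimates from a single $H^1$ energy identity for the second equation of \eqref{model}, which is linear in $c_2$, testing it with $-\Delta c_2$ and then invoking the $L^2(Q_{\widehat T})$ bound \eqref{sqc1} on $c_1$ already available from Lemma~\ref{lemmal1bound}.

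Concretely, I would multiply $\partial_t c_2 = a_2 \Delta c_2 + \alpha c_1 - \delta c_2$ by $-\Delta c_2$ and integrate over $\Omega$. Since $\partial_\nu c_2 = 0$ on $\partial\Omega$, the integrations by parts produce no boundary terms: the left-hand side becomes $\tfrac{d}{dt}\tfrac12\int_\Omega |\nabla c_2|^2$, the reaction term contributes $\delta\int_\Omega c_2\Delta c_2 = -\delta\int_\Omega|\nabla c_2|^2 \le 0$ with a favorable sign, and the diffusion term gives $-a_2\int_\Omega|\Delta c_2|^2$. The only term needing care is the coupling $-\alpha\int_\Omega c_1\Delta c_2$, which I absorb by Young's inequality, $-\alpha\int_\Omega c_1\Delta c_2 \le \tfrac{a_2}{2}\int_\Omega|\Delta c_2|^2 + \tfrac{\alpha^2}{2a_2}\int_\Omega c_1^2$. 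This yields the differential inequality
\[
\frac{d}{dt}\,\frac12\int_\Omega |\nabla c_2|^2 \;+\; \frac{a_2}{2}\int_\Omega |\Delta c_2|^2 \;+\; \delta\int_\Omega |\nabla c_2|^2 \;\le\; \frac{\alpha^2}{2a_2}\int_\Omega c_1^2 .
\]
Integrating over $(0,t)$ for any $t \le \widehat T$, the initial contribution $\tfrac12\int_\Omega|\nabla c_2^0|^2$ is finite because $c_2^0 \in C^{2+\omega}(\bar\Omega) \hookrightarrow H^1(\Omega)$ by \eqref{assu1}, while $\int_0^{\widehat T}\!\int_\Omega c_1^2 \le C(T)$ by \eqref{sqc1}. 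Discarding the nonnegative term $\int_\Omega|\Delta c_2|^2$ and taking the supremum over $t$ gives \eqref{nabc21}; discarding instead $\int_\Omega|\nabla c_2|^2$ and setting $t = \widehat T$ gives \eqref{delc2}. The resulting constant $\widehat C(T)$ depends only on $a_2,\alpha,\delta$, $\|\nabla c_2^0\|_{L^2(\Omega)}$, and the constant $C(T)$ of Lemma~\ref{lemmal1bound}.

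The only technical point, rather than a genuine obstacle, is that Lemma~\ref{local_existence} provides $c_2 \in C^{2,1}$ only for $t>0$, so the test with $-\Delta c_2$ and the time integration should first be performed on $(\tau,\widehat T)$ and then passed to the limit $\tau \downarrow 0$. This is harmless: $c_2 \in C^0(\bar\Omega\times[0,T_{\max}))$, $c_2^0$ is smooth, and parabolic regularity for the heat equation with the locally bounded continuous source $\alpha c_1 - \delta c_2$ propagates enough regularity near $t=0$ so that $\|\nabla c_2(\cdot,\tau)\|_{L^2(\Omega)} \to \|\nabla c_2^0\|_{L^2(\Omega)}$ as $\tau\downarrow 0$, after which the inequality passes to the limit by monotone/dominated convergence. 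Apart from this routine justification, the statement is an immediate consequence of the already established $L^1$-in-space bounds on $c_1,c_2$ and the $L^2(Q_{\widehat T})$ bound on $c_1$.
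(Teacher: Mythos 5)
Your proposal is correct and follows essentially the same route as the paper: test the $c_2$-equation with $-\Delta c_2$, absorb the coupling term $-\alpha\int_\Omega c_1\Delta c_2$ by Young's inequality, and close via the bound \eqref{sqc1} on $\int_0^{\widehat T}\!\int_\Omega c_1^2$ from Lemma \ref{lemmal1bound}. The only (harmless) difference is in the last step: the paper feeds $v(t)=\int_\Omega|\nabla c_2|^2$ into the ODE-comparison lemma of \cite[Lemma 3.4]{stinner2014global} using $\int_t^{t+1}f\le C$, whereas you integrate the differential inequality directly in time, which is equally valid here since the constant is allowed to depend on $T$.
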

	\begin{proof}
		We test the $c_2$-equation of \eqref{model} with $- \Delta c_2$ and use Young's inequality to have
		\begin{align}
			\nonumber \frac{1}{2} \frac{d}{dt} \int_\Omega |\nabla c_2|^2 + a_2 \int_\Omega |\Delta c_2|^2 + \delta \int_\Omega |\nabla c_2|^2 & = - \alpha \int_\Omega c_1 \Delta c_2               \\
			\label{nabc22} & \leq \frac{a_2}{2} \int_\Omega |\Delta c_2|^2 + \frac{\alpha^2}{2 a_2} \int_\Omega c_1^2
		\end{align}
		for all $ t \in (0, T_{\max})$. In particular, this shows that $v(t) := \int_\Omega |\nabla c_2(\cdot, t)|^2, t \in [0, \widehat{T}),$ satisfies
		\begin{equation*}
			v'(t) + 2 \delta v(t) \leq f(t) := \frac{\alpha^2}{a_2} \int_\Omega c_1^2
		\end{equation*}
		where from Lemma \ref{lemmal1bound} we know that
		\begin{equation}
			\label{nabc23}
			\int_{t}^{t + 1} f(s) ds \leq \frac{\alpha^2}{a_2}C(T) > 0 \quad \text{for all}~ t \in [0, \widehat{T} - 1).
		\end{equation}
		Hence, \cite[Lemma.3.4]{stinner2014global} ensures that
		\begin{equation}
			\label{nabc24}
			\int_\Omega |\nabla c_2(\cdot, t)|^2 \leq \max\left\{\int_\Omega |\nabla c_2^0|^2 + \tfrac{\alpha^2}{a_2}C(T), \tfrac{\alpha^2}{ a_2 \delta}(\tfrac{1 + 4 \delta}{2 \delta}) C(T)\right\} \quad \text{for all}~ t \in (0, \widehat{T}),
		\end{equation}
		thus yielding \eqref{nabc21}. Estimate \eqref{delc2} can be directly obtained by integrating  \eqref{nabc22} and as a consequence of \eqref{nabc24}.
	\end{proof}
	\noindent
	The following result is an immediate outcome of the estimates in Lemma \ref{lemnabc2}.
	%%%%%%%%%%%%%%%%%%%%%%%%%%%%%%%%%%%%%%%%%%%%%%%%%%%%%%%%%%%%%%%%%%%%%%%%%%%%%%%%%%%%%%%%%%%%%%%%%%%%%%%%%%%%%%%%%%%%%%%%%%%%%%%%%%%%%%%%%%%%%%%%%%%%%%%%%%%%%%%%%%%%%%%%%%%%%%%%%%%%%%%%%%%%%%%%%%%%%%%%%%%%%%%
	\begin{lemma}
		\label{regc2}
		Let $p \geq 1$ and $T > 0$. Then there exists $C(p, T) > 0$ such that with $\widehat{T} := \min\{T, T_{\max}\}$,
		\begin{equation}
			\label{reguc2}
			\int_\Omega c_2^p(\cdot, t) \leq C(p, T)  \quad \text{for all} ~ t \in (0, \widehat{T}).
		\end{equation}
	\end{lemma}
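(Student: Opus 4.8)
The plan is to read the claim off the $t$-uniform bounds already established, via a single Gagliardo--Nirenberg interpolation; the two-dimensionality of $\Omega$ is exactly what makes this work for \emph{every} finite $p$. For $p=1$ there is nothing to prove, since \eqref{reguc2} is then precisely \eqref{l1b2} of Lemma \ref{lemmal1bound}. So I would fix $p>1$ for the rest of the argument.

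First I would recall the Gagliardo--Nirenberg inequality on the bounded, smooth domain $\Omega\subset\R^2$ in the form
\begin{equation*}
	\|u\|_{L^p(\Omega)}\le C_{GN}\Big(\|\nabla u\|_{L^2(\Omega)}^{\theta}\,\|u\|_{L^1(\Omega)}^{1-\theta}+\|u\|_{L^1(\Omega)}\Big),\qquad \theta=1-\tfrac1p\in(0,1),
\end{equation*}
which is admissible because $\tfrac1p=\theta\big(\tfrac12-\tfrac12\big)+(1-\theta)\cdot 1$ when $n=2$, with $C_{GN}=C_{GN}(p,\Omega)$. Then I would apply this with $u=c_2(\cdot,t)$ for $t\in(0,\widehat T)$ and insert the two $t$-independent estimates already at our disposal: the $L^1$ bound $\int_\Omega c_2(\cdot,t)\le C(T)$ from \eqref{l1b2} (Lemma \ref{lemmal1bound}) and the gradient bound $\int_\Omega|\nabla c_2(\cdot,t)|^2\le\hat C(T)$ from \eqref{nabc21} (Lemma \ref{lemnabc2}), using $c_2\ge0$ to identify $\|c_2\|_{L^1}$ with $\int_\Omega c_2$. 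This yields
\begin{equation*}
	\|c_2(\cdot,t)\|_{L^p(\Omega)}\le C_{GN}\Big(\hat C(T)^{\theta/2}\,C(T)^{1-\theta}+C(T)\Big)=:C_1(p,T)\qquad\text{for all }t\in(0,\widehat T),
\end{equation*}
and raising to the $p$-th power gives $\int_\Omega c_2^p(\cdot,t)\le C_1(p,T)^p=:C(p,T)$, which is \eqref{reguc2}.

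I do not expect a genuine obstacle here; the only points to watch are that the additive lower-order term in the Gagliardo--Nirenberg inequality is genuinely needed because $\Omega$ has finite measure and no zero-mean normalization is imposed on $c_2$, and that the resulting constant depends only on $p$, $\Omega$, and $T$ (through $C(T)$ and $\hat C(T)$), not on $T_{\max}$. An alternative would be a standard $L^p$-testing and bootstrap on the $c_2$-equation, but since the $H^1$-type bound of Lemma \ref{lemnabc2} is already in place, the interpolation route is the shortest. It is worth noting that this is precisely the step where $n=2$ enters decisively: in three dimensions the same reasoning would only deliver $p\le 6$.
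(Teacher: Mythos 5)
Your proof is correct and takes essentially the same route as the paper, whose one-line argument simply combines \eqref{l1b2} and \eqref{nabc21} with the two-dimensional embedding $W^{1,2}(\Omega)\hookrightarrow L^p(\Omega)$. Your explicit Gagliardo--Nirenberg interpolation with the additive $L^1$ term is in fact a touch more careful, since it also supplies the $L^2$ bound on $c_2$ itself that the paper's terse invocation of the embedding leaves implicit.
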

	\begin{proof}
		As $W^{1, 2}(\Omega) \hookrightarrow L^p(\Omega)$, combining \eqref{l1b2} with \eqref{nabc21} results in \eqref{reguc2}.
	\end{proof}

	%%%%%%%%%%%%%%%%%%%%%%%%%%%%%%%%%%%%%%%%%%%%%%%%%%%%%%%%%%%%%%%%%%%%%%%%%%%%%%%%%%%%%%%%%%%%%%%%%%%%%%%%%%%%%%%%%%%%%%%%%%%%%%%%%%%%%%%%%%%%%%%%%%%%%%%%%%%%%%%%%%%%%%%%%%%%%%%%%%%%%%%%%%%%%%%%%%%%%%%%%%%%%%%
	
	%%%%%%%%%%%%%%%%%%%%%%%%%%%%%%%%%%%%%%%%%%%%%%%%%%%%%%%%%%%%%%%%%%%%%%%%%%%%%%%%%%%%%%%%%%%%%%%%%%%%%%%%%%%%%%%%%%%%%%%%%%%%%%%%%%%%%%%%%%%%%%%%%%%%%%%%%%%%%%%%%%%%%%%%%%%%%%%%%%%%%%%%%%%%%%%%%%%%%%%%%%%%%%%
	\begin{lemma}
		\label{z_inf_bound}
		Let $p \geq 4$ and $T > 0$. Then there exists $C(T) > 0$ such that
		\begin{equation}
			\label{zlb1}
			\|z(\cdot, t)\|_{L^\infty(\Omega)} \leq C(T) \quad \text{for all} ~ t \in (0, \widehat{T}),
		\end{equation}
		where $\widehat{T} := \min\{T, T_{\max}\}$.
	\end{lemma}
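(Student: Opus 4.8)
The plan is to everything-reduce to an $L^\infty$ bound for $z$ and to cash in the superlinear absorption hidden in the logistic reaction term. First I would collect the $z$--dependent contributions on the right-hand side of the $z$--equation of \eqref{sys1} and discard the three manifestly nonpositive ones, $-\alpha z$, $-\tfrac{\beta c_2}{K_{c_1}}z$ and $-\tfrac{b\gamma_2}{a_1}\tfrac{c_2}{K_{c_1}+c_2}z$, so that $z$ is a subsolution of
\[
\partial_t z \;\le\; a_1 e^{-\frac{b}{a_1}h}\nabla\cdot\!\big(e^{\frac{b}{a_1}h}\nabla z\big)\;-\;\frac{\beta}{K_{c_1}}e^{\frac{b}{a_1}h}z^2\;+\;\Big(\beta+\frac{b\gamma_1}{a_1}h c_2\Big)z\;+\;\delta c_2 e^{-\frac{b}{a_1}h}.
\]
The ingredients I would feed in are: Lemma~\ref{lemmhbound}, which gives $0\le h\le C(T)$ on $(0,\widehat T)$, so the weights $e^{\pm\frac{b}{a_1}h}$ lie between positive constants depending only on $T$; Lemma~\ref{regc2}, which gives $c_2$ bounded in $L^\infty\big(0,\widehat T;L^q(\Omega)\big)$ for every finite $q$; the pointwise bound $|\partial_t h|\le\gamma_1 h c_2+\gamma_2\le C(T)(1+c_2)$ read off the $h$--equation; and $z_0=e^{-\frac{b}{a_1}h_0}c_1^0\in L^\infty(\Omega)$ from \eqref{assu1}.

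\emph{Step 1: a uniform $L^p(\Omega)$ bound for every finite $p$.} I would test the $z$--equation with $e^{\frac{b}{a_1}h}z^{p-1}$ rather than with $z^{p-1}$: in this weighted form the diffusion term integrates by parts into $-a_1(p-1)\int_\Omega e^{\frac{b}{a_1}h}z^{p-2}|\nabla z|^2$, and crucially no $\Delta h$ appears. On the left one picks up $\tfrac1p\tfrac{d}{dt}\int_\Omega e^{\frac{b}{a_1}h}z^p$ together with $\tfrac{b}{a_1p}\int_\Omega(\partial_t h)e^{\frac{b}{a_1}h}z^p$, and, using $e^{\frac{b}{a_1}h}\ge1$ in the logistic term and $e^{\frac{b}{a_1}h}\le C(T)$, $h\le C(T)$ elsewhere, a single application of Young's inequality dominates each of $\beta z^p$, $\tfrac{b\gamma_1}{a_1}h c_2 z^p$, $\tfrac{b}{a_1}|\partial_t h|z^p$ and $\delta c_2 e^{-\frac{b}{a_1}h}z^{p-1}$ by $\tfrac{\beta}{2K_{c_1}}z^{p+1}$ plus a fixed power of $(1+c_2)$, which by Lemma~\ref{regc2} is bounded in $L^1(\Omega)$ uniformly in $t$. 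Writing $y_p(t):=\int_\Omega e^{\frac{b}{a_1}h}z^p$, this leaves
\[
y_p'(t)+\frac{p\beta}{2K_{c_1}}\int_\Omega z^{p+1}+\frac{4a_1(p-1)}{p}\int_\Omega e^{\frac{b}{a_1}h}\big|\nabla z^{p/2}\big|^2\;\le\;C(p,T),
\]
and since boundedness of $\Omega$ and of the weight give $\int_\Omega z^{p+1}\ge C(T)^{-1}y_p^{(p+1)/p}$, the scalar differential inequality $y_p'\le-c\,y_p^{(p+1)/p}+C(p,T)$ yields $y_p(t)\le\max\{y_p(0),(C(p,T)/c)^{p/(p+1)}\}$, i.e. $\|z(\cdot,t)\|_{L^p(\Omega)}\le C(p,T)$ for all $t\in(0,\widehat T)$.

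\emph{Step 2: upgrade to $L^\infty$.} Retaining the gradient term above shows $z^{p/2}$ is bounded in $L^\infty(0,\widehat T;L^2)\cap L^2(0,\widehat T;W^{1,2})$; iterating the estimate along $p=2^k$ and using, at each level, the two-dimensional Gagliardo--Nirenberg inequality to absorb $\int_\Omega z^{2^k+1}$ and the lower-order forcing into the dissipated gradient energy and the bound at the previous level produces a Moser--Alikakos iteration whose constants grow only polynomially in $2^k$, and passing to the limit gives $\|z\|_{L^\infty(\Omega\times(0,\widehat T))}\le C(T)$, which is \eqref{zlb1}. As an alternative finish, since $c_1=z e^{\frac{b}{a_1}h}$ and $c_2$ are by now bounded in $L^\infty_t L^q_x$ for all $q$, every reaction term of the $z$--equation sits in $L^\infty_t L^q_x$; writing the equation as $\partial_t z-a_1\Delta z-b\nabla h\cdot\nabla z=F$ with $\nabla h\in L^\infty\big(0,\widehat T;L^4(\Omega)\big)$ — which follows from Lemma~\ref{lemnabc2} together with \eqref{sqc1}, parabolic $L^2$--regularity for $c_2$, and the explicit formula \eqref{nabh} — a parabolic $L^p$ bootstrap in the spirit of \cite{pang2017global,ladyzenskaja1968linear} places $z$ in $W^{2,1}_q(\Omega\times(0,\widehat T))$ for some $q>2$, hence in $C\big(\overline{\Omega\times(0,\widehat T)}\big)$.

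The part I expect to be most delicate is the treatment of the drift, equivalently of the $h$--dependent weight in the diffusion operator: the whole $L^p$ scheme closes only because the test functions $e^{\frac{b}{a_1}h}z^{p-1}$ annihilate the $\Delta h$--contribution, leaving just the (uniformly comparable to $1$) weight and the forcing $|\partial_t h|\lesssim 1+c_2$ to be controlled; and in the $L^\infty$ step one must keep the $p$--dependence of the constants polynomial so the iteration converges (respectively gain enough integrability on $\nabla z$ before invoking parabolic regularity), and check that no constant secretly depends on $T_{\max}$ instead of $T$, so that the bound genuinely holds on all of $(0,\widehat T)$.
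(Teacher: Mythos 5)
Your proposal follows essentially the same route as the paper: testing the $z$-equation against the weight $e^{\frac{b}{a_1}h}$ times a power of $z$ (so that no $\Delta h$ term appears), using the $L^\infty$ bound on $h$ from Lemma \ref{lemmhbound} and the $L^q$ bounds on $c_2$ from Lemma \ref{regc2}, and concluding with a Gagliardo--Nirenberg based Moser--Alikakos iteration exactly as in the paper. The only variations are minor: you keep the logistic absorption term $-\tfrac{\beta}{K_{c_1}}\int_\Omega e^{\frac{2b}{a_1}h}z^{p+1}$ and run a scalar ODE comparison to get the intermediate $L^p$ bounds, whereas the paper discards it and instead absorbs $\epsilon\int_\Omega z^{p+1}$ into the gradient dissipation via Gagliardo--Nirenberg before iterating, and your sketched alternative finish via parabolic $L^p$ regularity with drift $b\nabla h\cdot\nabla z$ is not needed for the main argument.
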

	%%%%%%%%%%%%%%%%%%%%%%%%%%%%%%%%%%%%%%%%%%%%%%%%%%%%%%%%%%%%%%%%%%%%%%%%%%%%%%%%%%%%%%%%%%%%%%%%%%%%%%%%%%%%%%%%%%%%%%%%%%%%%%%%%%%%%%%%%%%%%%%%%%%%%%%%%%%%%%%%%%%%%%%%%%%%%%%%%%%%%%%%%%%%%%%%%%%%%%%%%%%%%%%
	\begin{proof}
		Using \eqref{sys1} we have
		\begin{align}
			\label{zlb2}
			\nonumber & \dfrac{d}{dt} \int_\Omega e^{\frac{b}{a_1} h} z^p = p \int_\Omega e^{\frac{b}{a_1} h} h z^{p - 1} z_t  + \tfrac{b}{a_1}\int_\Omega e^{\frac{b}{a_1} h} z^p h_t\\
			\nonumber = & p \int_\Omega e^{\frac{b}{a_1} h} z^{p - 1}\Big\{a_1e^{-\tfrac{b}{a_1} h} \nabla \cdot (e^{\tfrac{b}{a_1} h} \nabla z) - \alpha z + \delta c_2 e^{-\frac{b}{a_1} h} \\
			\nonumber & + z [\beta(1 - \tfrac{1}{K_{c_1}}z e^{\frac{b}{a_1} h} - \tfrac{c_2}{K_{c_1}}) + \tfrac{b \gamma_1}{a_1} h c_2 - \tfrac{b\gamma_2}{a_1} \tfrac{c_2}{K_{c_1} + c_2}]\Big\}      \\
			\nonumber  & + \tfrac{b}{a_1} \int_\Omega e^{\frac{b}{a_1} h} z^p \Big\{-\gamma_1 h c_2 + \gamma_2\tfrac{c_2}{K_{c_1} + c_2}\Big\}                       \\
			\nonumber & = - a_1 p(p - 1) \int_\Omega e^{\frac{b}{a_1} h} z^{p - 2} |\nabla z|^2 + (\beta - \alpha) p \int_\Omega e^{\frac{b}{a_1} h} z^p + p \int_\Omega z^{p - 1} c_2
			+ (p - 1)\tfrac{b \gamma_1}{a_1} \int_\Omega e^{\frac{b}{a_1} h} z^p c_2 h \\ & - p \beta \tfrac{1}{K_{c_1}}\int_\Omega e^{\frac{2 b}{a_1} h} z^{p + 1} - p \beta \tfrac{1}{K_{c_1}} \int_\Omega e^{\frac{b }{a_1} h} z^p c_2 - (p - 1)\tfrac{b\gamma_2}{a_1} \int_\Omega e^{\frac{b}{a_1} h} z^p \tfrac{c_2}{K_{c_1} + c_2}
		\end{align}
		for all $t \in (0, T_{\max})$.  
		
		\noindent
		The positivity of the solution components together with the condition $p \geq 4$ lead to 
				\begin{align}
			\label{zlb3}
			\dfrac{d}{dt} \int_\Omega e^{\frac{b}{a_1} h} z^p \leq & - a_1 p(p - 1) \int_\Omega e^{\frac{b}{a_1} h} z^{p - 2} |\nabla z|^2 + (\beta - \alpha) p \int_\Omega e^{\frac{b}{a_1} h} z^p \notag \\
			&+ p \int_\Omega z^{p - 1} c_2
			+ (p - 1)\tfrac{b \gamma_1}{a_1} \int_\Omega e^{\frac{b}{a_1} h} z^p c_2 h
		\end{align}
		for all $t \in (0, T_{\max})$. 
		
		\noindent
		By Lemma \ref{lemmhbound} we can have a finite $C_1(T)$ such that $\|h\|_{L^\infty(\Omega \times (0, \widehat{T}))}\le C_1(T)$ for all $t \in (0, \widehat{T})$, hence
		\begin{align}
			\label{zlb4}
			\nonumber \dfrac{d}{dt} \int_\Omega e^{\frac{b}{a_1} h} z^p \leq - \frac{4 a_1 (p - 1)}{p^2} \int_\Omega |\nabla z^{\frac{p}{2}}|^2 & + (\beta - \alpha) p C_{f_1}(C_1(T))\int_\Omega z^p + p \int_\Omega z^{p - 1} c_2 \\
			& + (p - 1) C_{f_2}(C_1(T))\tfrac{b \gamma_1}{a_1} \int_\Omega  z^p c_2
		\end{align}
		for all $t \in (0, \widehat{T})$, where  $C_{f_j}(C_1(T))>0$ are constants depending on $C_1(T)$, $j=1,2$. 
		
		\noindent
		From \eqref{zlb4} we can have $C_2(T) > 0$ and $C_3(T) > 0$, such that
		\begin{align}
			\label{zlb5}
			\dfrac{d}{dt} \int_\Omega e^{\frac{b}{a_1} h} z^p \leq - \frac{4 a_1 (p - 1)}{p^2} \int_\Omega |\nabla z^{\frac{p}{2}}|^2 + C_2(T) \int_\Omega z^p + C_3(T) \int_\Omega z^p c_2
		\end{align}
		for all $t \in (0, \widehat{T})$. By Young's inequality
		\begin{align}
			\label{zlb6}
			\dfrac{d}{dt} \int_\Omega e^{\frac{b}{a_1} h} z^p \leq - \frac{4 a_1 (p - 1)}{p^2} \int_\Omega |\nabla z^{\frac{p}{2}}|^2 + C_2(T) \int_\Omega z^p + C_3(T)\epsilon_1 \int_\Omega z^{p + 1} + C_3(T)C(\epsilon_1) \int_\Omega  c_2^{p + 1}
		\end{align}
		for all $t \in (0, \widehat{T})$. 
		
		\noindent
		Lemma \ref{regc2} provides a $C(p, T) > 0$ such that $\int_\Omega c_2^p \leq C(p, T)$ for all $t \in (0, \widehat{T})$, hence,
		\begin{align}
			\label{zlb6}
			\dfrac{d}{dt} \int_\Omega e^{\frac{b}{a_1} h} z^p \leq - \frac{4 a_1 (p - 1)}{p^2} \int_\Omega |\nabla z^{\frac{p}{2}}|^2 + C_2(T) \int_\Omega z^p + C_3(T)\epsilon_1 \int_\Omega z^{p + 1} + C(p, T)
		\end{align}
		for all $t \in (0, \widehat{T})$. We can rewrite \eqref{zlb6} in the following form:
		\begin{equation}
			\label{zlb7}
			\dfrac{d}{dt} \int_\Omega e^{\frac{b}{a_1} h} z^p + C_4 \int_\Omega |\nabla z^{\frac{p}{2}}|^2 +  C_5(T) \int_\Omega e^{\frac{b}{a_1} h} z^p \leq C_6(p, T) \int_\Omega z^{p + 1} + C_7(p, T)
		\end{equation}
		for all $t \in (0, \widehat{T})$. By means of the Gagliardo-Nirenberg inequality, we can estimate
		\begin{equation}
			\label{zlb8}
			C_6(p, T) \int_\Omega z^{p + 1} \leq C_4 \|\nabla z^{\frac{p}{2}}\|^2_{L^2(\Omega)} + C_8 p^{\frac{4p}{p - 2}}\|z^{\frac{p}{2}}\|_{L^1(\Omega)}^{\frac{2p}{p - 2}}
		\end{equation}
		for all $t \in (0, \widehat{T})$. For the details of this calculation refer to \cite[(3.31)]{ke2018note}. 
		
		\noindent
		Inserting \eqref{zlb8} into \eqref{zlb7}, we have
		\begin{align}
			\label{zlb7}
			\nonumber \dfrac{d}{dt} \int_\Omega e^{\frac{b}{a_1} h} z^p + C_5(T) \int_\Omega e^{\frac{b}{a_1} h} z^p & \leq  C_8 p^{\frac{4p}{p - 2}}\|z^{\frac{p}{2}}\|_{L^1(\Omega)}^{\frac{2p}{p - 2}} + C_7(p, T)  \\[5pt]
			& \leq C_9 p^{\frac{4p}{p - 2}}\left(\max\{1, \|c_1^{\frac{p}{2}}\|_{L^1(\Omega)}\}\right)^{\frac{2p}{p - 2}}
		\end{align}
		for all $t \in (0, \widehat{T})$. Define $\tau := \tfrac{1}{2} \widehat{T}$, set $p_i = 2^{i + 2}$ and let
		\begin{equation}
			\label{defMi}
			M_i := \max\left\{1, \sup_{t \in (\tau, \widehat{T})} \int_\Omega z^{\frac{p_i}{2}}(\cdot, t)\right\}, \quad i = 1, 2, 3, \ldots.
		\end{equation}
		Then \eqref{zlb7} directly entails
		\begin{equation}
			\label{zlb7}
			\dfrac{d}{dt} \int_\Omega e^{\frac{b}{a_1} h} z^p + C_5(T) \int_\Omega e^{\frac{b}{a_1} h} z^p  \leq C_{10} p_i^{\frac{2p_i}{p_i - 2}} M_{i - i}^{\frac{2p_i}{i - 1}}(T)
		\end{equation}
		for all $t \in (0, \widehat{T})$. 
		
		\noindent
		A comparison argument then gives us a $\rho > 1$ independent of $i$ such that
		\begin{equation}
			\label{defMi1}
			M_i(T) \leq \max\left\{\rho^i M_{i - 1}^{\frac{2p_i}{p_i - 2}}(T),C_{11}(T) \|z_0\|^{p_i}_{L^\infty(\Omega)}\right\},
		\end{equation}
		where we have used the fact that $\kappa_i := \tfrac{2 p_i}{p_i - 2} \leq 4$. 
		
		\noindent
		If $\rho^i M_{i - 1}^{\kappa_i}(T) \leq C_{12}(T) \|z_0\|^{p_i}_{L^\infty(\Omega)}$ for finitely many $i \geq 1$, then we have
		\begin{equation}
			\label{defMi2}
			\left(\sup_{t \in (\tau, \widehat{T})}\int_\Omega z^{p_i - 1}(\cdot, t)\right)^{\frac{1}{p_i - 1}} \leq \left(\frac{C_{12}(T)\|z_0\|_{L^\infty(\Omega)}^{p_i}}{\rho_i}\right)^{\frac{1}{p_{i - 1}\kappa_i}}
		\end{equation}
		for such $i$, which ensures that
		\begin{equation}
			\label{zlb8}
			\sup_{t \in (\tau, \widehat{T})} \|z(\cdot, t)\|_{L^\infty(\Omega)} \leq \|z_0\|_{L^\infty(\Omega)}.
		\end{equation}
		Else, if $\rho^i M_{i - 1}^{\kappa_i}(T) > C_{12}(T) |\Omega|  \|z_0\|^{p_i}_{L^\infty(\Omega)}$ for a sufficiently large $i$, then \eqref{defMi1} entails
		\begin{equation}
			\label{defMi3}
			M_i(T) \leq \rho^i M_{i - i}^{\kappa_i}(T) \quad \text{for such sufficiently large}~ i.
		\end{equation}
		Thus \eqref{defMi3} is valid for all $i \geq 1$ with enlarging $\rho$, if required. That is
		\begin{equation*}
			M_i(T) \leq \rho^i M_{i - i}^{\kappa_i}(T) \quad \text{for all}~ i \geq 1.
		\end{equation*}
		We are now in a position to apply a straightforward induction argument as in \cite[(3.36)]{ke2018note} and obtain
		\begin{equation*}
			\|z(\cdot, t)\|_{L^\infty(\Omega)} \leq C_{13}(T) \quad \text{for all}~ t \in (\tau, \widehat{T}),
		\end{equation*}
		from which \eqref{zlb1} directly follows.
	\end{proof}
	%%%%%%%%%%%%%%%%%%%%%%%%%%%%%%%%%%%%%%%%%%%%%%%%%%%%%%%%%%%%%%%%%%%%%%%%%%%%%%%%%%%%%%%%%%%%%%%%%%%%%%%%%%%%%%%%%%%%%%%%%%%%%%%%%%%%%%%%%%%%%%%%%%%%%%%%%%%%%%%%%%%%%%%%%%%%%%%%%%%%%%%%%%%%%%%%%%%%%%%%%%%%%%%
	\begin{lemma}
		\label{c2_inf_bound}
		Let $T > 0$. Then there exists $C(T) > 0$ such that
		\begin{equation}
			\label{c2lb1}
			\|c_2(\cdot, t)\|_{L^\infty(\Omega)} \leq C(T) \quad \text{for all} ~ t \in (0, \widehat{T}),
		\end{equation}
		where $\widehat{T} := \min\{T, T_{\max}\}$.
	\end{lemma}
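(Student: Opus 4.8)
The plan is to obtain \eqref{c2lb1} essentially as a corollary of the two sup-bounds already established: Lemma~\ref{z_inf_bound} for $z$ and Lemma~\ref{lemmhbound} for $h$. These two together control $c_1 = z\,e^{\frac{b}{a_1}h}$ in $L^\infty$, after which $c_2$ inherits an $L^\infty$-bound from its (essentially linear) reaction--diffusion equation with bounded source term, via an elementary parabolic comparison argument.

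Concretely, I would first recall that, by the change of variables \eqref{cov1}, $c_1 = z\,e^{\frac{b}{a_1}h}$. Lemma~\ref{lemmhbound} furnishes a finite $C_1(T)$ with $\|h(\cdot,t)\|_{L^\infty(\Omega)} \le C_1(T)$ on $(0,\widehat T)$, and Lemma~\ref{z_inf_bound} furnishes $\|z(\cdot,t)\|_{L^\infty(\Omega)} \le C_2(T)$ on the same interval; multiplying gives $\|c_1(\cdot,t)\|_{L^\infty(\Omega)} \le C_2(T)\,e^{\frac{b}{a_1}C_1(T)} =: C_3(T)$ for all $t \in (0,\widehat T)$. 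Next I would exploit the second equation of \eqref{model}, $\partial_t c_2 = a_2\Delta c_2 + \alpha c_1 - \delta c_2$ with homogeneous Neumann data: since $0 \le c_1 \le C_3(T)$, the spatially homogeneous function
\begin{equation*}
\overline{c_2}(t) := \|c_2^0\|_{L^\infty(\Omega)}\,e^{-\delta t} + \frac{\alpha C_3(T)}{\delta}\big(1 - e^{-\delta t}\big)
\end{equation*}
is a supersolution — it satisfies $\partial_t \overline{c_2} - a_2\Delta\overline{c_2} + \delta\overline{c_2} = \alpha C_3(T) \ge \alpha c_1$, the Neumann condition trivially, and $\overline{c_2}(0) = \|c_2^0\|_{L^\infty(\Omega)} \ge c_2^0$ — so the parabolic comparison principle yields $c_2(x,t) \le \overline{c_2}(t) \le \|c_2^0\|_{L^\infty(\Omega)} + \frac{\alpha C_3(T)}{\delta}$ on $\Omega\times(0,\widehat T)$, which is \eqref{c2lb1}. (Equivalently, one could write $c_2$ by Duhamel's formula and use the $L^\infty$-contractivity of the Neumann heat semigroup $e^{t a_2\Delta}$ together with $\int_0^t e^{-\delta(t-s)}\,ds \le 1/\delta$.)

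In this lemma there is no genuine obstacle: all of the analytic difficulty has already been absorbed into Lemma~\ref{z_inf_bound} (the Moser--Alikakos iteration bounding $z$) and Lemma~\ref{lemmhbound} (the ODE estimate for $h$). The only thing to watch is that the bounds on $z$ and $h$ are merely local in time (valid on $(0,\widehat T)$ with $T$-dependent constants), so the resulting bound on $c_2$ is likewise local in time with a $T$-dependent constant, which is exactly what is claimed; the passage to global existence, i.e. ruling out $T_{\max}<\infty$ via the extensibility criterion \eqref{leloc2}, is left to the subsequent argument and is not part of this lemma.
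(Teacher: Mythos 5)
Your proposal is correct, and it rests on exactly the same inputs as the paper: the $L^\infty$-bound for $h$ from Lemma \ref{lemmhbound} and the $L^\infty$-bound for $z$ from Lemma \ref{z_inf_bound}, which together control the source term $\alpha c_1=\alpha z e^{\frac{b}{a_1}h}$ in the $c_2$-equation. Where you differ is the final step: the paper writes $c_2$ via the variation-of-constants formula with the Neumann semigroup $e^{t(a_2\Delta-\delta)}$ and invokes the smoothing estimates of \cite[Lemma 1.3]{winkler2010aggregation} together with $\int_0^\infty(1+s^{-1/2})e^{-C_5 s}\,ds<\infty$, whereas you conclude by a parabolic comparison with the explicit spatially homogeneous supersolution $\overline{c_2}(t)=\|c_2^0\|_{L^\infty(\Omega)}e^{-\delta t}+\frac{\alpha C_3(T)}{\delta}(1-e^{-\delta t})$; your verification that this is a supersolution is correct, and it even yields the cleaner bound $\|c_2^0\|_{L^\infty(\Omega)}+\frac{\alpha C_3(T)}{\delta}$. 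Since the forcing is already bounded in $L^\infty$, no semigroup smoothing is actually needed, so your comparison argument is the more elementary of the two; the Duhamel route you mention parenthetically (using only $L^\infty$-contractivity of the Neumann heat semigroup) is essentially the paper's proof. Your closing remark is also accurate: the bound is local in time with a $T$-dependent constant, and the passage to global existence via the extensibility criterion \eqref{leloc2} belongs to the proof of Theorem \ref{mth}, not to this lemma.
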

	
	\begin{proof}
		Applying the variation of constants formula to determine $c_2$ from the second equation of \eqref{sys1}, we get
		\begin{align}
			\label{c2lb2}
			\nonumber \|c_2(\cdot, t)\|_{L^\infty(\Omega)} & = \sup_\Omega \left(e^{t(a_2\Delta - \delta)} c_2^0 + \alpha \int_0^t e^{(t - s)(a_2\Delta - \delta)} e^{\frac{b}{a_1}h(\cdot, s)} z(\cdot, s) ds\right)  \\
			\nonumber                                      & \leq \sup_\Omega \left(e^{t(a_2\Delta - \delta)} c_2^0\right) + \sup_\Omega \left( \alpha \int_0^t e^{(t - s)(a_2\Delta - \delta)} e^{\frac{b}{a_1}h(\cdot, s)} z(\cdot, s) ds\right) \\
			& \leq \|e^{t(a_2\Delta - \delta)} c_2^0\|_{L^\infty(\Omega)} + \alpha  \int_0^t \|e^{(t - s)(a_2\Delta - \delta)} e^{\frac{b}{a_1}h(\cdot, s)} z(\cdot, s) \|_{L^\infty(\Omega)} ds\quad \text{for all} ~ t \in (0, \widehat{T}).
		\end{align}
		Clearly,
		\begin{equation}
			\label{c2lb3}
			\|e^{t(a_2\Delta - \delta)} c_2^0\|_{L^\infty(\Omega)} \leq C_1 \quad \text{for all} ~ t \in (0, \widehat{T}),
		\end{equation}
		where $C_1 = \|c_2^0\|_{C(\bar{\Omega})}$. 
		
		\noindent
		Lemma \ref{lemmhbound} ensures a finite $C_2(T) := \|h\|_{L^\infty(\Omega \times (0, \widehat{T}))}$ for all $t \in (0, \widehat{T})$, and Lemma \ref{z_inf_bound} guarantees $\|z(\cdot, t)\|_{L^\infty(\Omega)} \leq C_3(T)$ for all $t \in (0, \widehat{T})$, hence
		\begin{align}
			\alpha \int_0^t \|e^{(t - s)(a_2\Delta - \delta)}e^{\frac{b}{a_1}h(\cdot, s)} z(\cdot, s)\|_{L^\infty(\Omega)}  & \leq C_4(T) \int_0^t (1 + (t - s)^{-\frac{1}{2}})e^{-C_5(t - s)} ds   \notag \\                              
						 & \leq 2 C_4(T) C_6 \int_0^t (1 - s^{-\frac{1}{2}}) e^{-C_5 s}ds     \notag   \\
			& \leq C_7(T) \quad \text{for all} ~ t \in (0, \widehat{T}).\label{c2lb4}
		\end{align}
		Here and above we have used the Neumann heat semigroup estimates \cite[Lemma 1.3]{winkler2010aggregation} and the fact that $\int_0^\infty (1 + s^{-\frac{1}{2}})e^{-C_5 s} ds < \infty$. Collecting \eqref{c2lb3}-\eqref{c2lb4}, we get \eqref{c2lb1}.
		
	\end{proof}

	%%%%%%%%%%%%%%%%%%%%%%%%%%%%%%%%%%%%%%%%%%%%%%%%%%%%%%%%%%%%%%%%%%%%%%%%%%%%%%%%%%%%%%%%%%%%%%%%%%%%%%%%%%%%%%%%%%%%%%%%%%%%%%%%%%%%%%%%%%%%%%%%%%%%%%%%%%%%%%%%%%%%%%%%%%%%%%%%%%%%%%%%%%%%%%%%%%%%%%%%%%%%%%%

	\begin{lemma}
		\label{nabhbound1}
		Let $T > 0$ and $q \geq 1$. Then there exists $C(q, T) > 0$ which satisfies
		\begin{equation}
			\label{nabhb1}
			\|\nabla h(\cdot, t)\|_{L^q(\Omega)} \leq C(q, T) \int_\tau^T \|\nabla c_2(\cdot, s)\|_{L^q(\Omega)}ds + C(q, T)
		\end{equation}
		for all $t \in (\tau, \widehat{T})$, with $\widehat{T} := \min\{T, T_{\max}\}$ and $\tau := \tfrac{1}{2} \widehat{T}$.
	\end{lemma}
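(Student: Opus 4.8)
The plan is to exploit that the third equation of \eqref{sys1} is, for each fixed $x$, a linear scalar ODE in $t$; differentiating it in $x$ therefore yields a linear ODE for $\nabla h$. Since $(z,c_2,h)$ is a classical solution of \eqref{sys1}, applying $\nabla$ to $\partial_t h=-\gamma_1 h c_2+\gamma_2\frac{c_2}{K_{c_1}+c_2}$ gives, pointwise in $x\in\Omega$ and for $t\in(0,T_{\max})$,
\[
\partial_t\nabla h=-\gamma_1 c_2\,\nabla h+\Big(\tfrac{\gamma_2 K_{c_1}}{(K_{c_1}+c_2)^2}-\gamma_1 h\Big)\nabla c_2 .
\]
Solving this linear ODE by variation of constants on $(0,t)$ (the coefficient $\gamma_1 c_2$ being nonnegative) then gives, for $t\in(0,\widehat T)$,
\[
\nabla h(x,t)=e^{-\gamma_1\int_0^t c_2(x,\sigma)\,d\sigma}\,\nabla h_0(x)+\int_0^t e^{-\gamma_1\int_s^t c_2(x,\sigma)\,d\sigma}\Big(\tfrac{\gamma_2 K_{c_1}}{(K_{c_1}+c_2(x,s))^2}-\gamma_1 h(x,s)\Big)\nabla c_2(x,s)\,ds ,
\]
where $\nabla h_0$ is a fixed function determined by the initial data.

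First I would bound the ingredients of this representation uniformly. Because $c_2\ge 0$ by Lemma \ref{local_existence}, all exponential factors are $\le 1$. Next, $0\le\frac{\gamma_2 K_{c_1}}{(K_{c_1}+c_2)^2}\le\frac{\gamma_2}{K_{c_1}}$, while Lemma \ref{lemmhbound} together with $\widehat T\le T$ gives a constant $C_1(T)$ with $\|h\|_{L^\infty(\Omega\times(0,\widehat T))}\le C_1(T)$; hence the scalar factor in the integrand is bounded in modulus by $C_2(T):=\frac{\gamma_2}{K_{c_1}}+\gamma_1 C_1(T)$. Taking the modulus in the representation, then the $L^q(\Omega)$-norm, and applying Minkowski's integral inequality in $s$, we obtain
\[
\|\nabla h(\cdot,t)\|_{L^q(\Omega)}\le\|\nabla h_0\|_{L^q(\Omega)}+C_2(T)\int_0^t\|\nabla c_2(\cdot,s)\|_{L^q(\Omega)}\,ds\qquad\text{for all }t\in(0,\widehat T).
\]
Splitting $\int_0^t=\int_0^\tau+\int_\tau^t$ (recall $\tau=\tfrac12\widehat T$) and using $\int_\tau^t\le\int_\tau^{\widehat T}\le\int_\tau^T$ since $\widehat T\le T$, the claim \eqref{nabhb1} follows once we show $\int_0^\tau\|\nabla c_2(\cdot,s)\|_{L^q(\Omega)}\,ds\le C(q,T)$.

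For this last estimate I would use the a priori bounds of Lemma \ref{lemnabc2}: combined with elliptic $L^2$-regularity for $-\Delta$ under homogeneous Neumann boundary conditions and the bound on $\|c_2(\cdot,t)\|_{L^2(\Omega)}$ from Lemma \ref{regc2}, they yield $\int_0^{\widehat T}\|c_2(\cdot,s)\|_{W^{2,2}(\Omega)}^2\,ds\le C(T)$, hence $\int_0^{\widehat T}\|\nabla c_2(\cdot,s)\|_{W^{1,2}(\Omega)}^2\,ds\le C(T)$. Since $\Omega\subset\mathbb{R}^2$, the Sobolev embedding $W^{1,2}(\Omega)\hookrightarrow L^q(\Omega)$ holds for every finite $q\ge 1$, so $\int_0^{\widehat T}\|\nabla c_2(\cdot,s)\|_{L^q(\Omega)}^2\,ds\le C(q,T)$; the Cauchy--Schwarz inequality in $s$ then bounds $\int_0^\tau\|\nabla c_2(\cdot,s)\|_{L^q(\Omega)}\,ds$ by $T^{1/2}C(q,T)^{1/2}$, completing the proof.

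The only real obstacle is this last step, i.e.\ obtaining an a priori bound on $\int_0^\tau\|\nabla c_2\|_{L^q(\Omega)}\,ds$ without presupposing the conclusion, and it is precisely here that the two-dimensionality of $\Omega$ enters, through $W^{1,2}\hookrightarrow L^q$ for all finite $q$. The remaining ingredients — differentiating the ODE, the variation-of-constants formula, and the pointwise coefficient bounds — are elementary and use only the positivity of $c_2$ and the already available $L^\infty$-bound on $h$.
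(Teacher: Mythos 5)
Your argument is correct, but it follows a genuinely different route from the paper's. The paper also differentiates the $h$-equation to get \eqref{nabhb2}, but then integrates only over $(\tau,t)$, keeps $\|\nabla h(\cdot,s)\|_{L^q(\Omega)}$ inside the time integral and closes the estimate with Gronwall's lemma, using the $L^\infty$ bound on $c_2$ from Lemma \ref{c2_inf_bound} to bound the coefficient $-\gamma_1 h+\tfrac{\gamma_2 K_{c_1}}{(K_{c_1}+c_2)^2}$, and absorbing $\|\nabla h(\cdot,\tau)\|_{L^q(\Omega)}$ into a constant $C_1(T)$. You instead solve the linear ODE for $\nabla h$ explicitly by variation of constants, exploit the sign of $-\gamma_1 c_2$ so that no Gronwall step (and no $L^\infty$ bound on $c_2$, only $c_2\ge 0$ and $\tfrac{\gamma_2K_{c_1}}{(K_{c_1}+c_2)^2}\le\tfrac{\gamma_2}{K_{c_1}}$) is needed, and start the integration at $t=0$, so the ``initial'' term is the data $\nabla h_0$ rather than $\nabla h(\cdot,\tau)$; this is arguably cleaner, since a bound on $\nabla h(\cdot,\tau)$ at the intermediate time is not an obvious a priori quantity, whereas $\|\nabla h_0\|_{L^q(\Omega)}$ is. The price you pay is that you must control $\int_0^\tau\|\nabla c_2(\cdot,s)\|_{L^q(\Omega)}\,ds$, which you do via Lemma \ref{lemnabc2}, elliptic regularity, Lemma \ref{regc2}, the two-dimensional embedding $W^{1,2}(\Omega)\hookrightarrow L^q(\Omega)$ and Cauchy--Schwarz --- exactly the argument the paper defers to Lemma \ref{nabhbound2} (estimate \eqref{1nabh2}). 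So your proof is less modular (it pre-uses the content of the next lemma and in fact yields the stronger absolute bound of Lemma \ref{nabhbound2} directly), while the paper's splitting keeps Lemma \ref{nabhbound1} independent of the elliptic-regularity input; both establish \eqref{nabhb1}, with the integral $\int_\tau^T$ there understood, as in the paper, as $\int_\tau^{\widehat T}$ when $T_{\max}<T$.
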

	\begin{proof}
		Differentiating the third equation of \eqref{sys1}, we have
		\begin{equation}
			\label{nabhb2}
			\partial_t \nabla h = - \gamma_1 c_2 \nabla h + \nabla c_2 \left\{-\gamma_1 h + \frac{K_{c_1}}{(K_{c_1} + c_2)^2}\right\}.
		\end{equation}
		As consequences of Lemma \ref{lemmhbound} and Lemma \ref{c2_inf_bound}, the functions $- \gamma_1 c_2$ and $-\gamma_1 h + \tfrac{K_{c_1}}{(K_{c_1} + c_2)^2}$ are bounded in $\Omega \times (0, \widehat{T})$, so from \eqref{nabhb2} we can have
		\begin{align}
			\label{nabhb3}
			\nonumber \|\nabla h(\cdot, t)\|_{L^q(\Omega)} & = \left\|\nabla h(\cdot, \tau) + \int_\tau^t \partial_t \nabla h(\cdot, s) ds\right\|_{L^q(\Omega)}  \\
			& \leq C_1(T) + C_2(T) \int_\tau^t \left\{\|\nabla h(\cdot, s)\|_{L^q(\Omega)} + \|\nabla c_2(\cdot, s)\|_{L^q(\Omega)}\right\} ds
		\end{align}
		for all $t \in (\tau, \widehat{T})$. By applying Gronwall's lemma to \eqref{nabhb3}, we arrive at \eqref{nabhb1}.
	\end{proof}

	%%%%%%%%%%%%%%%%%%%%%%%%%%%%%%%%%%%%%%%%%%%%%%%%%%%%%%%%%%%%%%%%%%%%%%%%%%%%%%%%%%%%%%%%%%%%%%%%%%%%%%%%%%%%%%%%%%%%%%%%%%%%%%%%%%%%%%%%%%%%%%%%%%%%%%%%%%%%%%%%%%%%%%%%%%%%%%%%%%%%%%%%%%%%%%%%%%%%%%%%%%%%%%%
	\begin{lemma}
		\label{nabhbound2}
		For all $q \geq 1$ and $T > 0$, there exists a $C(q, T) > 0$ such that with $\widehat{T} := \min\{T, T_{\max}\}$ and $\tau := \tfrac{1}{2} \widehat{T}$, the following holds true
		\begin{equation}
			\label{1nabh1}
			\|\nabla h(\cdot, t)\|_{L^q(\Omega)} \leq C(q, T), \quad \text{for all}~ t \in (\tau, \widehat{T}).
		\end{equation}
	\end{lemma}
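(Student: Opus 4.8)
The plan is to combine the $L^\infty$--bounds already obtained for $z$, $c_2$ and $h$ with the smoothing properties of the Neumann heat semigroup in order to control $\nabla c_2$ in $L^q(\Omega)$ uniformly in time, and then to feed this control into the estimate \eqref{nabhb1} of Lemma \ref{nabhbound1}. Thus the statement will follow once we show that there is $C(q,T)>0$ with $\|\nabla c_2(\cdot,t)\|_{L^q(\Omega)}\le C(q,T)$ for all $t\in(0,\widehat T)$.

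To this end I would start from the second equation of \eqref{sys1}, $\partial_t c_2 = a_2\Delta c_2 - \delta c_2 + \alpha z e^{\frac{b}{a_1}h}$, and write $c_2$ via the variation-of-constants formula
\[
c_2(\cdot,t) = e^{t(a_2\Delta-\delta)}c_2^0 + \alpha\int_0^t e^{(t-s)(a_2\Delta-\delta)}\big(z e^{\frac{b}{a_1}h}\big)(\cdot,s)\,ds ,
\]
$e^{t(a_2\Delta-\delta)}$ being the Neumann heat semigroup. By Lemma \ref{lemmhbound}, Lemma \ref{z_inf_bound} and Lemma \ref{c2_inf_bound}, the source term $z e^{\frac{b}{a_1}h}$ is bounded in $L^\infty(\Omega\times(0,\widehat T))$, hence uniformly bounded in $L^q(\Omega)$ for $t\in(0,\widehat T)$ since $\Omega$ is bounded. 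Differentiating the representation formula and invoking the standard gradient estimates for the Neumann heat semigroup \cite[Lemma 1.3]{winkler2010aggregation}, i.e. $\|\nabla e^{\sigma(a_2\Delta-\delta)}w\|_{L^q(\Omega)}\le C(1+\sigma^{-1/2})e^{-\mu\sigma}\|w\|_{L^q(\Omega)}$ for a suitable $\mu>0$ and, for the initial term, $\|\nabla e^{t(a_2\Delta-\delta)}c_2^0\|_{L^q(\Omega)}\le C\|\nabla c_2^0\|_{L^q(\Omega)}$ (which is finite by the regularity of $c_2^0$ assumed in \eqref{assu1}), I would obtain
\[
\|\nabla c_2(\cdot,t)\|_{L^q(\Omega)} \le C\|c_2^0\|_{C^1(\bar\Omega)} + C\int_0^t \big(1+(t-s)^{-1/2}\big)e^{-\mu(t-s)}\,ds \le C(q,T)
\]
for all $t\in(0,\widehat T)$, the last inequality using $\int_0^\infty(1+\sigma^{-1/2})e^{-\mu\sigma}\,d\sigma<\infty$. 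The only mildly delicate point is the singularity $\sigma^{-1/2}$ at $\sigma=0$, which is harmless because it is integrable.

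Finally I would plug this uniform bound into \eqref{nabhb1}: since $\|\nabla c_2(\cdot,s)\|_{L^q(\Omega)}\le C(q,T)$ on the finite time interval $(\tau,\widehat T)\subset(0,T)$, the integral appearing there is at most $C(q,T)\cdot T$, so Lemma \ref{nabhbound1} yields $\|\nabla h(\cdot,t)\|_{L^q(\Omega)}\le C(q,T)$ for all $t\in(\tau,\widehat T)$, which is precisely \eqref{1nabh1}. I do not anticipate a real obstacle here: the substantive work has already been carried out in Lemmas \ref{lemmhbound}--\ref{c2_inf_bound} and in Lemma \ref{nabhbound1}, and what remains is the routine semigroup argument bounding $\nabla c_2$.
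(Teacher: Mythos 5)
Your proposal is correct, but it reaches the key intermediate bound on $\nabla c_2$ by a genuinely different route than the paper. The paper does not use the semigroup representation at this stage: it combines \eqref{nabc21} and \eqref{delc2} from Lemma \ref{lemnabc2} with elliptic regularity to get $\int_\tau^{\widehat T}\|c_2(\cdot,t)\|_{W^{2,2}(\Omega)}^2\,dt\le C(T)$, then uses the two-dimensional embedding $W^{2,2}(\Omega)\hookrightarrow W^{1,q}(\Omega)$ and Cauchy--Schwarz to bound only the \emph{time-integrated} quantity $\int_\tau^{\widehat T}\|\nabla c_2(\cdot,t)\|_{L^q(\Omega)}\,dt$, which is exactly what \eqref{nabhb1} requires. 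You instead apply the variation-of-constants formula for $c_2$ and the gradient estimates for the Neumann heat semigroup (\cite[Lemma 1.3]{winkler2010aggregation}, already used in Lemma \ref{c2_inf_bound}), using the $L^\infty$ bounds on $z$ and $h$ from Lemmas \ref{z_inf_bound} and \ref{lemmhbound} to control the source $z e^{\frac{b}{a_1}h}$; since those lemmas precede this one, there is no circularity, and your argument yields the stronger, pointwise-in-time bound $\sup_{t\in(0,\widehat T)}\|\nabla c_2(\cdot,t)\|_{L^q(\Omega)}\le C(q,T)$, after which feeding it into \eqref{nabhb1} is immediate. What each approach buys: the paper's argument needs only the cheap $L^2$ energy estimates of Lemma \ref{lemnabc2} but leans on the embedding $W^{2,2}\hookrightarrow W^{1,q}$ for all $q$, which is specific to $n=2$; yours avoids that dimensional restriction and gives uniform-in-time control of $\nabla c_2$, at the price of invoking the heavier $\|z\|_{L^\infty}$ machinery (which is, however, needed later anyway). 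Two cosmetic points: Lemma \ref{c2_inf_bound} is not actually needed to bound the source term, and the gradient semigroup estimate applied to the initial datum is typically stated for $q\ge 2$, so for $1\le q<2$ you should either reduce to large $q$ via the boundedness of $\Omega$ or use the $(1+t^{-1/2})$-type estimate with the integrable singularity; neither affects the validity of the argument.
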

	\begin{proof}
		By combining \eqref{nabc21} and \eqref{delc2} from Lemma \ref{lemnabc2} in conjunction with elliptic regularity we have a $C_1(T) > 0$ such that
		\begin{equation*}
			\int_\tau^{\widehat{T}} \|c_2(\cdot, t)\|^2_{W^{2,2}(\Omega)} dt \leq C_1(T),
		\end{equation*}
		which when combined with the continuous embedding $W^{2, 2}(\Omega) \hookrightarrow W^{1, q}(\Omega)$ and the Cauchy-Schwarz inequality allows us to have a $C_2(q) > 0$ such that
		\begin{align}
			\label{1nabh2}
			\nonumber \int_\tau^{\widehat{T}} \|\nabla c_2(\cdot, t)\|_{L^q(\Omega)}dt & \leq C_2(q) \int_\tau^{\widehat{T}} \|c_2(\cdot, t)\|_{W^{2,2}(\Omega)}dt    \\
			\nonumber & \leq C_2(q) \sqrt{T} \cdot \left\{\int_\tau^{\widehat{T}} \|c_2(\cdot, t)\|^2_{W^{2,2}(\Omega)} dt \right\}^\frac{1}{2}                       \\
			& \leq \sqrt{C_1(T)} C_2(q) \cdot \sqrt{T}.
		\end{align}
		The estimate \eqref{1nabh1} follows from the combination of \eqref{1nabh2} and Lemma \ref{nabhbound1}.
	\end{proof}
	%%%%%%%%%%%%%%%%%%%%%%%%%%%%%%%%%%%%%%%%%%%%%%%%%%%%%%%%%%%%%%%%%%%%%%%%%%%%%%%%%%%%%%%%%%%%%%%%%%%%%%%%%%%%%%%%%%%%%%%%%%%%%%%%%%%%%%%%%%%%%%%%%%%%%%%%%%%%%%%%%%%%%%%%%%%%%%%%%%%%%%%%%%%%%%%%%%%%%%%%%%%%%%%
	\begin{proof}[Proof of Theorem \ref{mth}]
		With the aid of the extensibility criterion \eqref{leloc2} from Lemma \ref{local_existence}, we only need to combine the outcome of Lemma \ref{z_inf_bound} with an application of Lemma \ref{nabhbound2} for $q := 5$ to reach the desired claim.
	\end{proof}
	%%%%%%%%%%%%%%%%%%%%%%%%%%%%%%%%%%%%%%%%%%%%%%%%%%%%%%%%%%%%%%%%%%%%%%%%%%%%%%%%%%%%%%%%%%%%%%%%%%%%%%%%%%%%%%%%%%%%%%%%%%%%%%%%%%%%%%%%%%%%%%%%%%%%%%%%%%%%%%%%%%%%%%%%%%%%%%%%%%%%%%%%%%%%%%%%%%%%%%%%%%%%%%
	%%%%%%%%%%%%%%%%%%%%%%%%%%%%%%%%%%%%%%%%%%%%%%%%%%%%%%%%%%%%%%%%%%%%%%%%%%%%%%%%%%%%%%%%%%%%%%%%%%%%%%%%%%%%%%%%%%%%%%%%%%%%%%%%%%%%%%%%%%%%%%%%%%%%%%%%%%%%%%%%%%%%%%%%%%%%%%%%%%%%%%%%%%%%%%%%%%%%%%%%%%%%
	%%%%%%%%%%%%%%%%%%%%%%%%%%%%%%%%%%%%%%%%%%%%%%%%%%%%%%%%%%%%%%%%%%%%%%%%%%%%%%%%%%%%%%%%%%%%%%%%%%%%%%%%%%%%%%%%%%%%%%%%%%%%%%%%%%%%%%%%%%%%%%%%%%%%%%%%%%%%%%%%%%%%%%%%%%%%%%%%%%%%%%%%%%%%%%%%%%%%%%%%%%%

	\subsection{Linear stability and bifurcations}\label{sec:stability}
	In this study we show that the taxis sensitivity parameter $b$ plays an important role in determining the stability of the steady state. Sufficiently high values of $b$ can induce Hopf bifurcations resulting in spatial and temporal patterns. 
	The approach to investigate the formation of patterns around the steady state is based on the linear stability analysis of the system (\ref{model}). We do a similar analysis to that in \cite{mishrainrepulsive}. In the absence of motility terms the model system admits a single non-trivial steady state:
	\begin{equation}
		\label{eq_sol1}
		\left(K_{c_1} \dfrac{ \delta}{\delta + \alpha},K_{c_1} \dfrac{\alpha}{\delta + \alpha}, \dfrac{\gamma_2(\delta + \alpha)}{\gamma_1 K_{c_1}(\delta + 2 \alpha )}\right)
	\end{equation}
	which can be rewritten as:
	\begin{equation}
		\label{eq_sol2}
		\left(c_1^*, \dfrac{\alpha}{\delta} c_1^*, \dfrac{\gamma_2}{\gamma_1(K_{c_1} +\dfrac{ \alpha}{\delta} c_1^*)}\right)
	\end{equation}
	where $c_1^* = K_{c_1} \dfrac{ \delta}{\delta + \alpha}$. We will now discuss the stability of the steady state (\ref{eq_sol2}). 
	\begin{theorem}
		\label{th1}
		The steady state (\ref{eq_sol2}) of the model system (\ref{model}) without motility terms is always locally asymptotically stable.
	\end{theorem}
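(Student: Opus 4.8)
The plan is to apply the principle of linearized stability (Lyapunov's indirect method): I would compute the Jacobian of the spatially homogeneous system obtained from \eqref{model} by dropping the diffusion and taxis terms, evaluate it at the steady state \eqref{eq_sol2}, and show that all of its eigenvalues have strictly negative real part. The kinetic system reads
\begin{align*}
\dot c_1 &= -\alpha c_1 + \delta c_2 + \beta c_1\Big(1-\tfrac{c_1}{K_{c_1}}-\tfrac{c_2}{K_{c_1}}\Big),\\
\dot c_2 &= \alpha c_1 - \delta c_2,\\
\dot h &= -\gamma_1 h c_2 + \gamma_2\tfrac{c_2}{K_{c_1}+c_2}.
\end{align*}
The crucial structural observation is that the right-hand sides of the first two equations do not depend on $h$, so the Jacobian has block lower-triangular form: a $2\times 2$ block $A$ acting in the $(c_1,c_2)$ variables, and a scalar diagonal entry $\partial_h\dot h=-\gamma_1 c_2^{*}$ in the $h$-direction. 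Since $c_2^{*}=\tfrac{\alpha}{\delta}c_1^{*}>0$ (positivity being automatic from $\alpha,\delta>0$), that scalar eigenvalue is negative, and it only remains to analyse $A$.

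The second key point is an algebraic simplification at the steady state: from $\dot c_2=0$ one has $\alpha c_1^{*}=\delta c_2^{*}$, hence $c_1^{*}+c_2^{*}=c_1^{*}\tfrac{\delta+\alpha}{\delta}=K_{c_1}$, so the logistic bracket $1-\tfrac{c_1^{*}+c_2^{*}}{K_{c_1}}$ vanishes there. Differentiating carefully (the terms $-\beta c_1/K_{c_1}$ still survive even though the bracket is zero) and setting $\mu:=\tfrac{\beta c_1^{*}}{K_{c_1}}=\tfrac{\beta\delta}{\delta+\alpha}>0$, I get
\[
A=\begin{pmatrix}-\alpha-\mu & \delta-\mu\\ \alpha & -\delta\end{pmatrix},
\]
so that $\operatorname{tr}A=-(\alpha+\delta+\mu)<0$ and, after cancellation, $\det A=\delta(\alpha+\mu)-\alpha(\delta-\mu)=\mu(\alpha+\delta)>0$. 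By the Routh--Hurwitz criterion for $2\times 2$ matrices (negative trace, positive determinant), both eigenvalues of $A$ have negative real part for \emph{every} admissible choice of parameters — in particular independently of $b$, which is consistent with the message of \S\ref{sec:stability} that any destabilization must be caused by taxis. Together with the negative $h$-eigenvalue, the full Jacobian is Hurwitz, and local asymptotic stability of \eqref{eq_sol2} follows.

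There is no real obstacle here; the only points demanding care are the correct differentiation of the quadratic term $\beta c_1(1-\tfrac{c_1}{K_{c_1}}-\tfrac{c_2}{K_{c_1}})$ and the bookkeeping that the third equation never enters beyond its diagonal entry. If one prefers to avoid the block decomposition, one can instead expand the characteristic polynomial as $(\lambda+\gamma_1 c_2^{*})\big(\lambda^2-(\operatorname{tr}A)\lambda+\det A\big)=0$ and check the cubic Routh--Hurwitz conditions directly; they reduce to $\operatorname{tr}A<0$, $\det A>0$ and $\gamma_1 c_2^{*}>0$, all of which hold unconditionally as above.
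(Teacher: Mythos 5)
Your argument is correct, and it reaches the conclusion by a genuinely different (and slimmer) route than the paper. The paper linearizes at \eqref{eq_sol2} to get the full $3\times 3$ Jacobian \eqref{jacob2}, forms the cubic characteristic polynomial $\lambda^3+A_1\lambda^2+A_2\lambda+A_3=0$, and verifies the cubic Routh--Hurwitz conditions $A_i>0$ and $A_1A_2-A_3>0$, the last of which requires an explicit expansion and regrouping of products of the coefficients. You instead exploit the structural fact that, without motility terms, the $(c_1,c_2)$ subsystem is decoupled from $h$, so the Jacobian is block lower-triangular: its spectrum is the union of the eigenvalues of the $2\times 2$ block $A$ and the scalar $-\gamma_1 c_2^\ast<0$, and $A$ is handled by trace and determinant alone (your computations $\operatorname{tr}A=-(\alpha+\delta+\mu)<0$ and $\det A=\mu(\alpha+\delta)>0$ with $\mu=\beta c_1^\ast/K_{c_1}$ are correct, as is the simplification $c_1^\ast+c_2^\ast=K_{c_1}$ that kills the logistic bracket). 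Indeed, your factorization $(\lambda+\gamma_1 c_2^\ast)\bigl(\lambda^2-\operatorname{tr}A\,\lambda+\det A\bigr)$ reproduces the paper's $A_1,A_2,A_3$, so the two proofs are algebraically consistent; what yours buys is brevity, avoidance of the $A_1A_2-A_3$ bookkeeping, and a transparent explanation of why $h$ alone can never destabilize the kinetics (it enters only as a stably decaying slave variable), which anticipates the paper's message that instability must come from the taxis coefficient $b$. What the paper's longer computation buys is that it introduces exactly the coefficient notation and the combination $A_1A_2-A_3$ that are reused verbatim in the subsequent analysis with motility terms (the condition \eqref{ruth2} and the Hopf bifurcation argument), so it integrates more directly with the rest of Section 3.2. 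One minor phrasing point: at the end you say the cubic Routh--Hurwitz conditions ``reduce to'' $\operatorname{tr}A<0$, $\det A>0$, $\gamma_1 c_2^\ast>0$; strictly, once you have the factorization you do not need the cubic criterion at all, since the root $-\gamma_1 c_2^\ast$ and the quadratic factor can be treated separately --- which is what your main argument already does.
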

	
	\begin{proof}
		
		At any equilibrium solution  $(c_1, c_2, h)$, the Jacobian matrix of system (\ref{model}) without motility terms is 
		\begin{equation}
		J(c_1,c_2,h)=	\begin{pmatrix}
				\label{jacob}
				- \alpha - \beta\dfrac{ c_1}{K_{c_1}} + \beta( 1 - \dfrac{c_1}{K_{c_1}} - \dfrac{c_2}{K_{c_1}} )    & \quad  -\beta \dfrac{ c_1}{K_{c_1}}+\delta                                  &  \quad       0                                \\
				\alpha                                         &  \quad    -\delta                                         &\quad         0                                \\
				0                                            &\quad  - \gamma_1h + \dfrac{\gamma_2 K_{c_1}}{(K_{c_1} + c_2)^2}           &\quad -c_2\gamma_1
			\end{pmatrix}.
		\end{equation}
		Therefore by using (\ref{jacob}) the Jacobian of (\ref{model}) at $\left(c_1^*, \dfrac{\alpha}{\delta} c_1^*, \dfrac{\gamma_2}{\gamma_1(K_{c_1} +\dfrac{ \alpha}{\delta} c_1^*)}\right)$ is given by
		\begin{equation}
		\renewcommand{\arraystretch}{1.8}
			J(c_1^*, \frac{\alpha}{\delta} c_1^*, \frac{\gamma_2}{\gamma_1(K_{c_1} +\frac{ \alpha}{\delta} c_1^*)}):=J^* = \begin{pmatrix}
				\label{jacob2}
				-\beta\dfrac{ c_1^*}{K_{c_1}}-\alpha    & \quad       -\beta\dfrac{ c_1^*}{K_{c_1}}+\delta                                 &  \quad        0                                            \\
				\alpha          &  \quad           -\delta                                       &  \quad        0                                            \\
				0               & \quad    -\dfrac{\gamma_2\frac{ \alpha}{\delta} c_1^*}{(K_{c_1} + \frac{\alpha }{\delta}c_1^*)^2}       & \quad     -\gamma_1 \dfrac{\alpha }{\delta} c_1^*
			\end{pmatrix}.
		\end{equation}
		The characteristic equation of $J^*$ is given by:
		\begin{equation}
			\label{ls1}
			\lambda^3 + A_1\lambda^2 + A_2 \lambda + A_3 = 0,
		\end{equation}
		where
		\begin{align*}
			A_1 & = \beta \dfrac{c_1^*}{K_{c_1}} + \gamma_1\dfrac{ \alpha }{\delta}c_1^* + \alpha+ \delta,                                                       \\
			A_2 & = \beta \delta \dfrac{c_1^*}{K_{c_1}} + \beta \alpha \dfrac{ c_1^{*}}{K_{c_1}} + \alpha \gamma_1 c_1^* + \dfrac{\alpha \beta \gamma_1}{\delta}\dfrac{ c_1^{*2}}{K_{c_1}}+\dfrac{ \gamma_1 \alpha^2 }{\delta}{c_1}^*,     \\
			A_3 & =\dfrac{ \alpha^2 \beta \gamma_1 }{\delta}\dfrac{c_1^{*2}}{K_{c_1}} + \alpha \beta \gamma_1 \dfrac{c_1^{*2}}{K_{c_1}}.
		\end{align*}
		Based on the positive values of the parameters, it can be concluded that $A_i > 0,\ i = 1, 2, 3$ and the following has been verified:
		\begin{equation*}
			\begin{aligned}
				A_1 A_2 - A_3 &= \big(\beta \dfrac{c_1^*}{K_{c_1}} + \gamma_1\dfrac{ \alpha }{\delta}c_1^* + \alpha+ \delta \big)\big( \beta \delta \dfrac{c_1^*}{K_{c_1}} + \beta \alpha \dfrac{ c_1^{*}}{K_{c_1}} + \alpha \gamma_1 c_1^* + \dfrac{\alpha \beta \gamma_1}{\delta}\dfrac{ c_1^{*2}}{K_{c_1}}+\dfrac{ \gamma_1 \alpha^2 }{\delta}{c_1}^*\big)\notag\\
				&\hspace{1cm} -\big(\dfrac{ \alpha^2 \beta \gamma_1 }{\delta}\dfrac{c_1^{*2}}{K_{c_1}} + \alpha \beta \gamma_1 \dfrac{c_1^{*2}}{K_{c_1}}\big)\\[5pt]
				&= \big(\gamma_1\dfrac{ \alpha }{\delta}c_1^* + \alpha+ \delta \big)\big( \beta \delta \dfrac{c_1^*}{K_{c_1}} +  \beta \alpha \dfrac{ c_1^{*}}{K_{c_1}} + \alpha \gamma_1 c_1^* + \dfrac{\alpha \beta \gamma_1}{\delta}\dfrac{ c_1^{*2}}{K_{c_1}}+\dfrac{ \gamma_1 \alpha^2 }{\delta}{c_1}^*\big)\notag\\  & \hspace{1cm} +\beta \dfrac{c_1^*}{K_{c_1}}  \big(\beta \delta \dfrac{c_1^*}{K_{c_1}} +  \beta \alpha \dfrac{ c_1^{*}}{K_{c_1}}  + \dfrac{\alpha \beta \gamma_1}{\delta}\dfrac{ c_1^{*2}}{K_{c_1}}\big)\\[5pt]
				&>0.
			\end{aligned}
		\end{equation*}Thus, we can apply the Routh-Hurwitz criterion to conclude that
		$Re(\lambda_i) < 0,\ i = 1, 2, 3$. Therefore, the steady state is always asymptotically stable in the absence of motility terms. 
	\end{proof}
	
	\noindent
	We will now turn our attention to the model system with motility terms. In order to perform stability analysis, 
	%we apply a small perturbation to the homogenous steady state (\ref{eq_sol1}) in the following manner as in \cite{lombardo2017demyelination}:
	%\begin{equation}
	%  \dot{\mathbf{w}} = J \mathbf{w} + D \Delta \mathbf{w} ,
	%\end{equation}
	%where,
	%\begin{equation}
	% \mathbf{w} = \begin{pmatrix}
		%  c_1 - c_1^*                    \\
		%  c_2 - c_2^*                  \\
		%   h - h^*
		% \end{pmatrix},
	%\end{equation}
	%$J$ is the Jacobian at the non-trivial equilibrium given in (\ref{jacob2}) and
	%\begin{equation}
	%  D = \begin{pmatrix}
		%  a_1       &    0        &     - b c_1^*                                                                    \\
		%  0         &    1      &      0                                                                             \\
		%  0         &    0        &      0
		%\end{pmatrix}.
		%\end{equation}
		%We are concerned with the solution of the form $\mathbf{w} \propto \exp (\lambda t + i \mathbf{k} \cdot \mathbf{x})$, where $\lambda$ is the growth rate and $\mathbf{k}$ is the wave number vector such that $k = |\mathbf{k}|$. Now, we calculate the characteristic equation for the growth rate $\lambda$ from $\det(J_D) = 0$, where,
		we linearise (\ref{model}) at the steady state (\ref{eq_sol2}) to obtain the following Jacobian matrix:

		\begin{equation} \label{jac}
			\renewcommand{\arraystretch}{1.8}
			J_j = \begin{pmatrix}
				-\beta\dfrac{ c_1^*}{K_{c_1}} - \alpha - a_1k_j & \quad 	-\beta\dfrac{ c_1^*}{K_{c_1}}+\delta  & \quad b c_1^*            k_j                                           \\
				\alpha                 &  \quad           -\delta -a_2 k_j                                             &   \quad       0                                            \\
				0                      & \quad  -\dfrac{\gamma_2\frac{ \alpha}{\delta} c_1^*}{(K_{c_1} + \frac{\alpha }{\delta}c_1^*)^2}       & \quad     -\gamma_1 \dfrac{\alpha }{\delta} c_1^*
			\end{pmatrix},
		\end{equation}
		where $\{{k_j}\}_{j=0}^{\infty}$ denotes the set of  eigenvalues of the Laplace operator $-\Delta$ with homogeneous Neumann boundary conditions in a smooth domain $\Omega$.
		The characteristic polynomial for (\ref{jac}) is 
		\begin{equation*}
			\lambda^3 + (A_j)^{(2)} \lambda^2 +( A_j)^{(1)}\lambda + (A_j)^{(0)}= 0,
		\end{equation*}
		where
		\begin{equation*}
			\begin{aligned}
				(A_j)^{(2)} & = \beta \dfrac{c_1^*}{K_{c_1}} + \gamma_1\dfrac{ \alpha }{\delta}c_1^* + \alpha+ \delta+ (a_1 + a_2)k_j                                                                                                                                                 \\
				( A_j)^{(1)} & = a_1 a_2 {k_j}^2 + (a_1 \delta +a_2 \alpha+ a_2 \beta \dfrac{c_1^*}{K_{c_1}} + \alpha \gamma_1 \dfrac{ a_1 +a_2  }{\delta}c_1^*)k_j \notag \\
				&\hspace{2cm}+ \beta \delta \dfrac{c_1^*}{K_{c_1}}+ \alpha \beta \dfrac{c_1^*}{K_{c_1}} + \alpha \gamma_1 c_1^* +\dfrac{\alpha^2 \gamma_1 }{\delta }c_1^*+ \dfrac{\alpha \beta \gamma_1}{\delta} \dfrac{(c_1^*)^{2}  }{K_{c_1}}                       \\
				(A_j)^{(0)} & = \dfrac{a_1 a_2 \alpha \gamma_1 }{\delta} c_1 ^* {k_j}^2 + \left(\dfrac{\alpha^2 b \gamma_2 }{\delta}\dfrac{(c_1^*)^2}{(K_{c_1}+\dfrac{\alpha}{\delta} c_1^*)^2} + a_1\alpha \gamma_1 c_1^* + \dfrac{a_2 \alpha\beta \gamma_1}{\delta}  \dfrac{(c_1^*)^2}{K_{c_1}}+ \dfrac{a_2 \gamma_1 \alpha^2}{\delta } c_1^*\right)k_j \notag \\
				&\hspace{2cm}+\dfrac{ \alpha^2 \beta \gamma_1 }{\delta}\dfrac{c_1^{*2}}{K_{c_1}} + \alpha \beta \gamma_1 \dfrac{c_1^{*2}}{K_{c_1}}.
			\end{aligned}
		\end{equation*}
		\noindent
		A stable steady state requires negative real parts for the eigenvalues of matrix $J_j$ for all $j\geq 0$. Using Routh Hurwitz's stability criterion, this corresponds to: 
		\begin{equation}\label{ruth1}
			(A_j)^{(2)} > 0, \quad  (A_j)^{(1)} > 0 \quad \text{and }
		\end{equation}
		\begin{equation}\label{ruth2}
			T(b,j):=(A_j)^{(2)}  (A_j)^{(1)}- (A_j)^{(0)}=B_1 {k_j}^3 + B_2 {k_j}^2 + B_3 k_j + B_4 -\dfrac{\alpha^2 b \gamma_2 }{\delta}\dfrac{(c_1^*)^2}{(K_{c_1}+\dfrac{\alpha}{\delta} c_1^*)^2} k_j>0, 
		\end{equation}
	for all $j \geq 0$, with $B_1, B_2, B_3, B_4 \geq 0$, due to the positivity of coefficients and non-negativity of eigenvalues. For $k_j=0$ we have proved in Theorem \ref{th1} that the steady state was stable.
		\begin{remark}
			\noindent It can be observed that for $b=0$ the steady state (\ref{eq_sol2}) satisfies the stability condition.
		\end{remark}
		\noindent It can be easily seen that ({\ref{ruth1}}) is always satisfied, and for $b=0$, (\ref{ruth2}) is  $$(A_j)^{(2)} (A_j)^{(1)} - (A_j)^{(0)}= B_1 {k_j}^3 + B_2 {k_j}^2 + B_3 k_j + B_4 >0. $$
		This indicates that diffusivity alone does not affect the local stability of the steady state, and only taxis towards the hyaluron/ECM gradient can cause instability.
		Now we can determine the $b$-dependent stability condition for the steady state (and for $k_j\neq 0$).
		Consider 
		\begin{equation*}
			\psi (k_j):= \frac{ B_1 {k_j}^3 + B_2 {k_j}^2 + B_3 k_j + B_4}{\dfrac{\alpha^2  \gamma_2 }{\delta}\dfrac{(c_1^*)^2}{(K_{c_1}+\dfrac{\alpha}{\delta} c_1^*)^2} k_j} = \frac{ \psi_1 (k_j)}{\psi_2k_j},
		\end{equation*}
	where we denoted
	\begin{align*}
	&\psi_1 (\zeta ):=B_1 {\zeta }^3 + B_2 {\zeta }^2 + B_3 \zeta + B_4\\
	&\psi_2:=\dfrac{\alpha^2  \gamma_2 }{\delta}\dfrac{(c_1^*)^2}{(K_{c_1}+\dfrac{\alpha}{\delta} c_1^*)^2}.
	\end{align*}	
		\noindent Since $\psi_2$ and the coefficients of $\psi_1¸$ are all positive for all $j \geq 0$ we have 
		\begin{equation}
			\lim_{x \to 0+}\psi(x)=\lim_{x \to \infty}\psi(x)=\infty
		\end{equation}
				\noindent and notice that $\psi $ is a strictly convex function. Hence there exist $b_c $ such that
		\begin{equation}\label{b_c}
			b_c:= \min_{j \in \mathbb{N}^+}  \frac{ \psi_1 (k_j)}{\psi_2k_j}.
		\end{equation}
		When $b<b_c$ , the steady state is stable. If 
		\begin{equation}\label{notequalpsi}
			\psi (k_j) \neq \psi(k_m) \quad \text{for } j \neq m,
		\end{equation}
		then the minimum is attained for a single $ j = j_0$. This proves the following result.
		\begin{proposition}\label{prop:1}
			The steady state(\ref{eq_sol2}) is locally asymptotically stable if $b<b_c$ defined in (\ref{b_c}).
		\end{proposition}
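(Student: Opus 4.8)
The plan is to obtain local asymptotic stability of \eqref{eq_sol2} from the Routh--Hurwitz criterion applied, mode by mode, to the cubic characteristic polynomial of each $J_{j}$ in \eqref{jac}, and then to invoke the principle of linearized stability for the parabolic system coupled to the (non-diffusive) ODE for $h$. Recall that a real cubic $\lambda^{3}+a_{1}\lambda^{2}+a_{2}\lambda+a_{3}$ has all its roots in the open left half-plane precisely when $a_{1}>0$, $a_{3}>0$ and $a_{1}a_{2}-a_{3}>0$ (which also entail $a_{2}>0$). Taking $a_{1}=(A_{j})^{(2)}$, $a_{2}=(A_{j})^{(1)}$, $a_{3}=(A_{j})^{(0)}$, the inequalities $(A_{j})^{(2)}>0$ and $(A_{j})^{(0)}>0$ hold for every $j\ge 0$ because all their summands are products of positive parameters, of $c_{1}^{*}>0$ and of $k_{j}\ge 0$; cf.\ \eqref{ruth1}. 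Hence everything reduces to verifying $T(b,j)=(A_{j})^{(2)}(A_{j})^{(1)}-(A_{j})^{(0)}>0$ for all $j\ge 0$.

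For $j=0$ one has $k_{0}=0$, so $J_{0}=J^{*}$ and $T(b,0)=A_{1}A_{2}-A_{3}$ is precisely the quantity already shown to be positive in the proof of Theorem~\ref{th1} (and it does not depend on $b$). For $j\ge 1$, where $k_{j}>0$, I would rewrite \eqref{ruth2} as
\[
T(b,j)=\psi_{1}(k_{j})-\psi_{2}\,b\,k_{j}=\psi_{2}\,k_{j}\bigl(\psi(k_{j})-b\bigr),
\]
so that $T(b,j)>0$ is equivalent to $b<\psi(k_{j})$. It therefore suffices that $b_{c}=\min_{j\in\mathbb{N}^{+}}\psi(k_{j})$ be a well-defined strictly positive number, for then $b<b_{c}\le\psi(k_{j})$ for all $j\ge1$.

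The only genuinely analytic point is the well-posedness of $b_{c}$. Writing $\psi(\zeta)=\bigl(B_{1}\zeta^{2}+B_{2}\zeta+B_{3}+B_{4}\zeta^{-1}\bigr)/\psi_{2}$, one checks that $\psi$ is continuous and strictly positive on $(0,\infty)$, that $\psi''(\zeta)=\bigl(2B_{1}+2B_{4}\zeta^{-3}\bigr)/\psi_{2}>0$, and that $\psi(\zeta)\to\infty$ both as $\zeta\to0^{+}$ (since $B_{4}>0$) and as $\zeta\to\infty$ (since $B_{1}>0$). Thus $\psi$ is strictly convex with a unique minimizer $\zeta^{*}>0$, strictly decreasing on $(0,\zeta^{*})$ and strictly increasing on $(\zeta^{*},\infty)$. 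Since the Neumann eigenvalues satisfy $0<k_{1}\le k_{2}\le\cdots$ with $k_{j}\to\infty$, only finitely many $j\ge1$ fulfil $\psi(k_{j})\le\psi(k_{1})$, and minimizing over that nonempty finite index set gives $b_{c}\in(0,\infty)$, attained at some $j=j_{0}$; under \eqref{notequalpsi} the strict monotonicity of $\psi$ on either side of $\zeta^{*}$ makes $j_{0}$ unique.

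Putting the pieces together, for $b<b_{c}$ all three Routh--Hurwitz inequalities hold for every $J_{j}$, $j\ge0$, so the spectrum of the linearization of \eqref{model} at \eqref{eq_sol2} lies in the open left half-plane; furthermore, for large $k_{j}$ the characteristic polynomial factors approximately as $(\lambda+a_{1}k_{j})(\lambda+a_{2}k_{j})(\lambda+\gamma_{1}\tfrac{\alpha}{\delta}c_{1}^{*})$, so two of its roots diverge to $-\infty$ while the third stays near $-\gamma_{1}\tfrac{\alpha}{\delta}c_{1}^{*}<0$; hence the real parts are bounded above by a negative constant uniformly in $j$, i.e.\ there is a genuine spectral gap. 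I expect the main obstacle to be converting this spectral gap into nonlinear local asymptotic stability: the $h$-equation carries no diffusion, so the linearized generator is not sectorial on all components and the textbook parabolic linearization theorem does not apply verbatim. A clean remedy is to work on a product phase space (a parabolic space for $(c_{1},c_{2})$ times a H\"older/Sobolev space for $h$) and exploit the triangular structure --- the $(c_{1},c_{2})$-block generates an analytic semigroup and the $h$-block is a bounded perturbation with negative diagonal part $-\gamma_{1}\tfrac{\alpha}{\delta}c_{1}^{*}$ --- to derive a decaying bound $\|e^{tL}\|\le Ce^{-\sigma t}$, after which a Gronwall/fixed-point argument using the already established local estimates and the locally Lipschitz, higher-order nonlinearity closes the proof. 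A secondary, purely bookkeeping task is to read off $B_{1},\dots,B_{4}\ge0$ and $B_{1},B_{4}>0$ explicitly from the formulas for $(A_{j})^{(2)},(A_{j})^{(1)},(A_{j})^{(0)}$.
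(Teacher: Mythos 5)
Your core argument coincides with the paper's: stability of \eqref{eq_sol2} is decided mode by mode via the Routh--Hurwitz criterion, with \eqref{ruth1} always satisfied, the $k_j=0$ mode already covered by Theorem~\ref{th1}, and \eqref{ruth2} rewritten for $k_j>0$ as $T(b,j)=\psi_2 k_j(\psi(k_j)-b)>0$, i.e.\ $b<\psi(k_j)$; the well-definedness of $b_c$ in \eqref{b_c} is obtained, as in the paper, from the positivity of $\psi_2$ and of the coefficients of $\psi_1$, the divergence of $\psi$ at $0^+$ and $\infty$, and its strict convexity (you additionally make explicit that $k_j\to\infty$ leaves only finitely many candidate modes, which is a welcome precision). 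The only real difference is one of scope: the paper treats the proposition purely at the linearized/spectral level and simply concludes stability once \eqref{ruth1}--\eqref{ruth2} hold for all $j$, whereas you go on to worry about a uniform spectral gap for large $k_j$ and about upgrading the spectral information to nonlinear local asymptotic stability despite the non-sectorial $h$-component, proposing a product-space semigroup argument. That extra layer is a legitimate strengthening (and correctly identifies the asymptotic roots $-a_1k_j$, $-a_2k_j$, $-\gamma_1\tfrac{\alpha}{\delta}c_1^*$), but it is not part of the paper's proof, which ends with the Routh--Hurwitz verification; nothing in your reduction to $b<\psi(k_j)$ conflicts with the paper's route.
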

	\noindent
		Since condition (\ref{ruth1}) is always satisfied, the stability of the steady state is determined by condition (\ref{ruth2}). When $b<b_c$ condition (\ref{ruth2}) is satisfied, hence ensuring the stability of the steady state. %Also it can be noted that for any $b>0$ and $j\geq0$, ${A_j}^{(0)}= -det (J_j) >0$. Hence when the steady state loses stability at $b=b_c$ all eigenvalues of $J_j$ are non zero, precluding the static bifurcation of steady state. 
		The following theorem shows the existence of a Hopf bifurcation.
		\begin{theorem}
			For $b=b_c$, if (\ref{notequalpsi}) holds, then a Hopf bifurcation occurs.
		\end{theorem}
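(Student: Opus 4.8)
The plan is to take $b$ as bifurcation parameter and verify the hypotheses of a Hopf bifurcation theorem for the parabolic--ODE system (\ref{model}) linearised at the steady state (\ref{eq_sol2}). First I would fix the index $j_0$ realising the minimum in (\ref{b_c}); by (\ref{notequalpsi}) it is unique, and since the $k_j=0$ mode was shown stable in Theorem \ref{th1} we necessarily have $k_{j_0}>0$. Abbreviate $\mathcal A_2:=(A_{j_0})^{(2)}$, $\mathcal A_1:=(A_{j_0})^{(1)}$, $\mathcal A_0(b):=(A_{j_0})^{(0)}$; reading off the coefficients, $\mathcal A_2$ and $\mathcal A_1$ do not contain $b$, while $\mathcal A_0(b)$ is affine and strictly increasing in $b$ with slope $\psi_2 k_{j_0}>0$, so $T(b,j_0)=\mathcal A_2\mathcal A_1-\mathcal A_0(b)$ is strictly decreasing and vanishes exactly at $b=b_c$. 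Hence at $b=b_c$ the $j_0$-th characteristic polynomial factorises as $\lambda^3+\mathcal A_2\lambda^2+\mathcal A_1\lambda+\mathcal A_0(b_c)=(\lambda+\mathcal A_2)(\lambda^2+\mathcal A_1)$, whose roots are $-\mathcal A_2<0$ and the simple conjugate pair $\pm i\omega_0$ with $\omega_0:=\sqrt{\mathcal A_1}>0$.

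Next I would rule out any other spectrum on the imaginary axis at $b=b_c$. For $j\neq j_0$ with $k_j>0$ one has $T(b_c,j)=\psi_2 k_j\bigl(\psi(k_j)-b_c\bigr)>0$ by minimality together with (\ref{notequalpsi}), while $T(b_c,0)=\psi_1(0)=B_4>0$; combined with $(A_j)^{(2)}>0$ and $(A_j)^{(1)}>0$ the Routh--Hurwitz criterion puts all roots of the $j$-th polynomial into the open left half-plane, uniformly in $j$ since $T(b,j)\sim B_1 k_j^3\to\infty$. Thus the only spectrum on the imaginary axis is the pair $\pm i\omega_0$, which is algebraically simple (the third root $-\mathcal A_2$ is real and distinct). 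Since $\partial_\lambda\bigl(\lambda^3+\mathcal A_2\lambda^2+\mathcal A_1\lambda+\mathcal A_0(b)\bigr)\big|_{\lambda=i\omega_0}=-2\mathcal A_1+2\mathcal A_2 i\omega_0\neq0$, the implicit function theorem yields a $C^1$ branch $b\mapsto\lambda(b)=\mu(b)+i\nu(b)$ near $b_c$ with $\lambda(b_c)=i\omega_0$.

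For the transversality condition I would differentiate the characteristic equation in $b$, getting $\lambda'(b)=-\mathcal A_0'(b)\big/\bigl(3\lambda^2+2\mathcal A_2\lambda+\mathcal A_1\bigr)$, and evaluate at $b=b_c$, $\lambda=i\omega_0$, $\omega_0^2=\mathcal A_1$:
\begin{equation*}
\lambda'(b_c)=\frac{\mathcal A_0'}{2\mathcal A_1-2\mathcal A_2 i\omega_0}=\frac{\mathcal A_0'\,(2\mathcal A_1+2\mathcal A_2 i\omega_0)}{4\mathcal A_1(\mathcal A_1+\mathcal A_2^2)},\qquad \mathrm{Re}\,\lambda'(b_c)=\frac{\mathcal A_0'}{2(\mathcal A_1+\mathcal A_2^2)}>0 ,
\end{equation*}
because $\mathcal A_0'=\psi_2 k_{j_0}>0$. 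Hence $\mu'(b_c)>0$: the conjugate pair crosses the imaginary axis transversally as $b$ increases through $b_c$, consistent with Proposition \ref{prop:1}. Together with the spectral picture above and the smoothness of the reaction terms in (\ref{model}), the Hopf bifurcation theorem (in the form used in \cite{mishrainrepulsive}) then applies on the two-dimensional centre manifold carried by the $j_0$-th Neumann eigenfunction of $-\Delta$, producing a family of small-amplitude nonconstant time-periodic solutions bifurcating at $b=b_c$.

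The main obstacle is not the cubic algebra but the functional-analytic framing: since (\ref{model}) couples a parabolic system to an ODE for $h$ (no smoothing in the $h$-component), one must choose phase spaces compatible with the Neumann conditions in which the linearisation generates a suitable semigroup, verify that $i\omega_0$ is a simple eigenvalue of that operator and not merely an algebraically simple root of one Fourier-mode polynomial, and confirm that the remaining spectrum stays in a fixed left half-plane for $b$ near $b_c$ so that the centre manifold is exactly two-dimensional. Once these standard but somewhat technical points are in place, the conclusion follows from the transversality computation above.
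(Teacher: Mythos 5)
Your proposal is correct and follows essentially the same route as the paper: both arguments reduce to the cubic characteristic polynomial of the critical Fourier mode $j_0$ singled out by \eqref{notequalpsi}, show that at $b=b_c$ it has a negative real root together with a simple purely imaginary pair $\pm i\omega_0$ with $\omega_0^2=(A_{j_0})^{(1)}$, and verify the transversality condition before invoking the Hopf bifurcation results of the cited references. The only cosmetic difference is that you compute $\mathrm{Re}\,\lambda'(b_c)=\psi_2 k_{j_0}\big/\bigl(2((A_{j_0})^{(1)}+((A_{j_0})^{(2)})^2)\bigr)>0$ by implicit differentiation of the characteristic polynomial, whereas the paper differentiates the root--coefficient relations \eqref{3.9}; the two computations give the same sign (your version even retains the factor $k_{j_0}$ and makes the non-resonance of the other modes explicit, points the paper leaves implicit).
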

	\noindent
		To show the occurence of a Hopf bifurcation we use \cite{mishrainrepulsive}, \cite{transhopf}, and \cite{amannhopf}. When (\ref{notequalpsi}) is satisfied we have
		\begin{equation}
			\begin{aligned}
				&(i) \ {A_j}^{(2)}= -trace (J_j) >0, \ {A_j}^{(1)}>0, \ {A_j}^{(0)}= -det (J_j) >0 \ \text{for all }j\geq0 \text{ and } b>0; \\
				&(ii) \ {A_j}^{(2)} {A_j}^{(1)} = {A_j}^{(0)} (b_c)  \ \text{for some } j=j_0.
			\end{aligned}
		\end{equation} 
		That means the characteristic polynomial of $J_j$ has a real negative root $\lambda_0$ and a pair of purely imaginary roots $\pm i \omega_0$. Let $\lambda_1 \in \mathbb{R} $ and $\lambda_{2,3} = \sigma(b) \pm i \omega(b) $ be the eigenvalues of the Jacobian matrix such that $\lambda_1(j_0)= \lambda_0$ and $\lambda_{2,3}(j_0) =\pm i \omega_0$ . Then we have
		
		\begin{equation}
			\begin{split}\label{3.9}
				-{A_j}^{(2)} &= 2 \sigma(b) + \lambda_1 (b) \\
				{A_j}^{(1)}&= \sigma(b)^2 + \omega(b)^2 + 2\sigma(b) \lambda_1(b)\\
				-{A_j}^{(0)} &=  ( \sigma(b)^2 + \omega(b)^2 ) \lambda_1(b).
			\end{split}
		\end{equation}
		
		\noindent From above we get $\lambda_1(b) < 0$. Also since $ \sigma(b_c) =0 $, from (\ref{3.9}) we get $ \omega(b_c)^2 = {{A_j}_0} ^{(1)} > 0$ and upon differentiating each equation in (\ref{3.9}) with respect to $b$ we obtain 
		
		\begin{equation}
			\begin{split}\label{3.10}
				& 2 \sigma'(b) + \lambda_1 '(b) =0 \\
				&2 \sigma(b) \sigma'(b) + 2 \omega(b)\omega'(b) + 2\sigma'(b) \lambda_1(b) +2\sigma(b) \lambda_1'(b)= 0\\
				& (\sigma(b)^2 + \omega(b)^2 ) \lambda_1'(b)+ 2 \sigma(b) \sigma'(b) \lambda_1(b) + 2 \omega(b)\omega'(b) \lambda_1(b) = -\psi_2.
			\end{split}
		\end{equation}
		\noindent
		Therefore  we have $ \sigma'(b) = \frac{-  \lambda_1 '(b)}{2}$ and substituting the results we obtained earlier in the second and third equations of (\ref{3.10}) we get 
		
		\begin{equation*}
			2 \omega(b_c)\omega'(b_c) +2\sigma'(b_c) \lambda_1(b_c) = 0 \quad\text{ and} \quad \omega(b_c)^2  \lambda_1 '(b_c)+ 2 \omega(b_c)\omega'(b_c) \lambda_1(b_c) = -\psi_2.
		\end{equation*}
		
		\noindent
		Hence by solving the system we obtain 
		
		\begin{equation}
			\lambda_1'(b_c)  = - \frac{\psi_2}{{\omega'(b_c)}^2 + {\lambda_1(b_c)}^2} <0 \quad\text{hence}\quad \sigma' (b_c) > 0.
		\end{equation}
		\noindent
		Therefore the transversality condition for a Hopf bifurcation is satisfied at $b=b_c$.
		\nocite{*}
		
		\section{Numerical simulations and patterns}\label{sec:numerics}
We perform numerical simulations in order to illustrate the behavior of the model in terms of patterns.  

\subsection{1D simulations}
\noindent
We use Matlab's \texttt{pdepe} to solve system \eqref{model} in one dimension. We fix a set of positive parameters and calculate the critical value $b_c$. According to our stability analysis, bifurcation occurs when $b>b_c$. We set 
		\begin{align*}
		&  a_1 =0.015; \quad a_2 = 0.007 ; \quad \delta= 0.6; \quad \beta=0.05; \quad\alpha=0.15; \quad \gamma_1 =0.1; \quad \gamma_2 =0.3 . 
		\end{align*}
		For the above parameters we obtain $b_c =3.34$ and we consider both situations,  where $b$ exceeds $b_c$ and where it is less. As initial conditions we take various perturbations of the steady state \eqref{eq_sol2}, summarized in four simulation scenarios:\\
		
		\noindent
		\textbf{Scenario 1:} $x\in [0,1]$, perturbed initial amount of chondrocytes:
		\begin{align*}
		c_1 (x,0)= c_1^*, \ c_2 (x,0)= c_2^*+0.1\exp(-(x-.5)^2/0.2), \ h (x,0)= h^*,\ b=3.7\ (b=1.8).
		\end{align*}
		
	\begin{figure}[!htbp]
	\begin{subfigure}{0.32\textwidth}
		\centering
		\includegraphics[width=\textwidth]{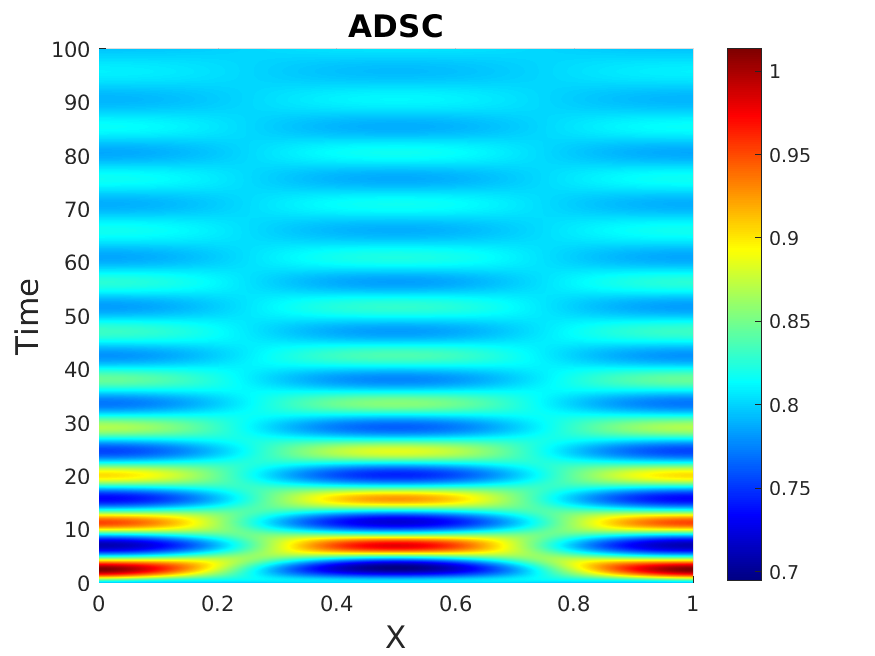}
		\caption{ADSCs for $b=3.7$}
		\label{}
	\end{subfigure}
	\begin{subfigure}{0.32\textwidth}
		\centering
		\includegraphics[width=\textwidth]{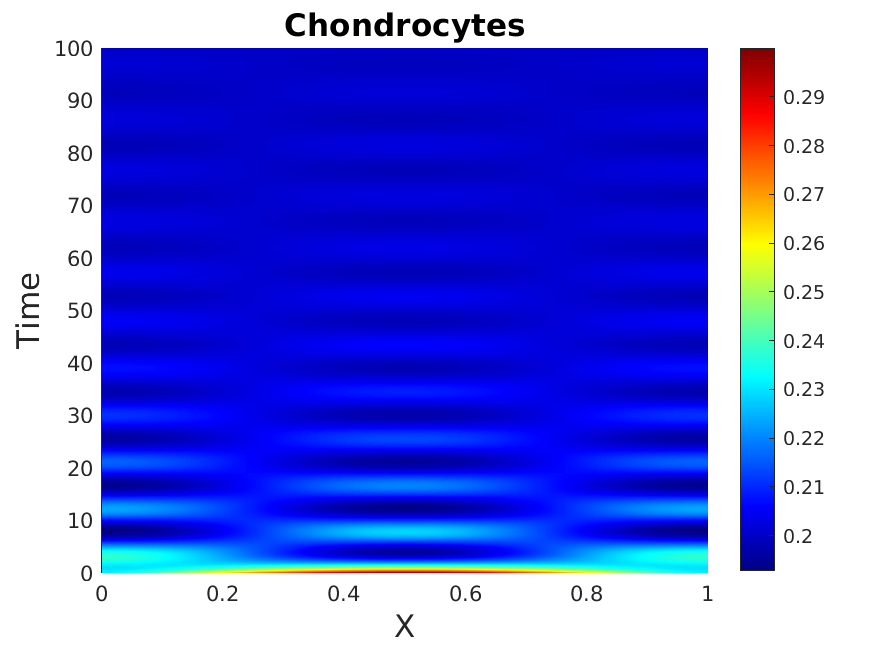}
		\caption{Chondrocytes for $b=3.7$}
	\end{subfigure}	
\begin{subfigure}{0.32\textwidth}
	\centering
	\includegraphics[width=\textwidth]{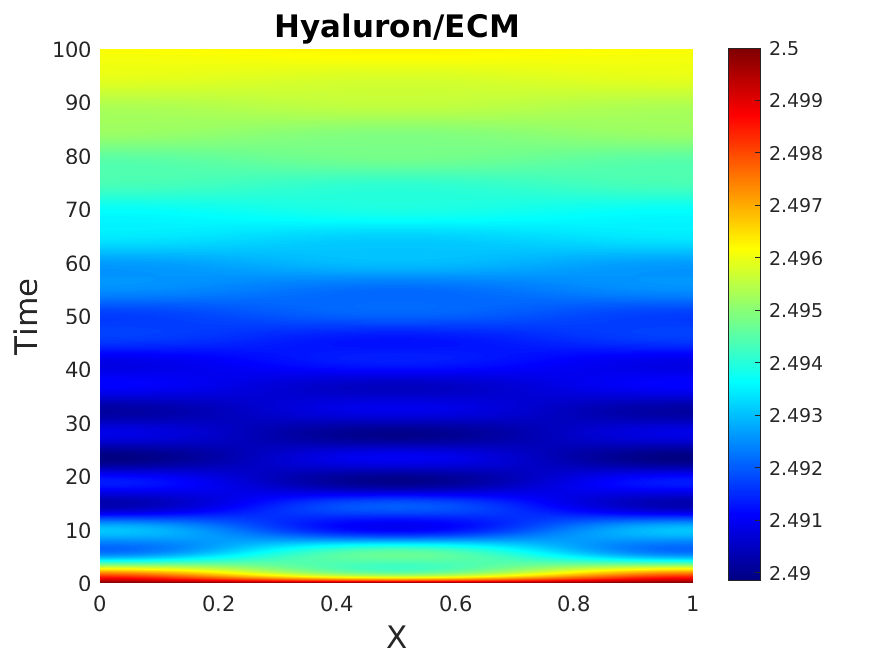}
	\caption{Hyaluron \& ECM for $b=3.7$}
\end{subfigure}	\\
\begin{subfigure}{0.32\textwidth}
	\centering
	\includegraphics[width=\textwidth]{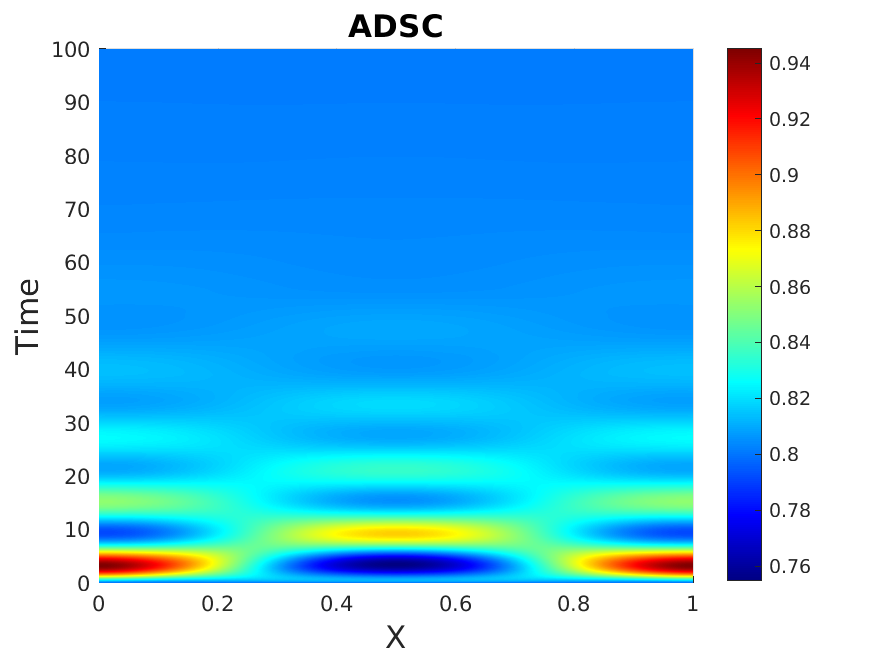}
	\caption{ADSCs for $b=1.8$}
	\label{}
\end{subfigure}
\begin{subfigure}{0.32\textwidth}
	\centering
	\includegraphics[width=\textwidth]{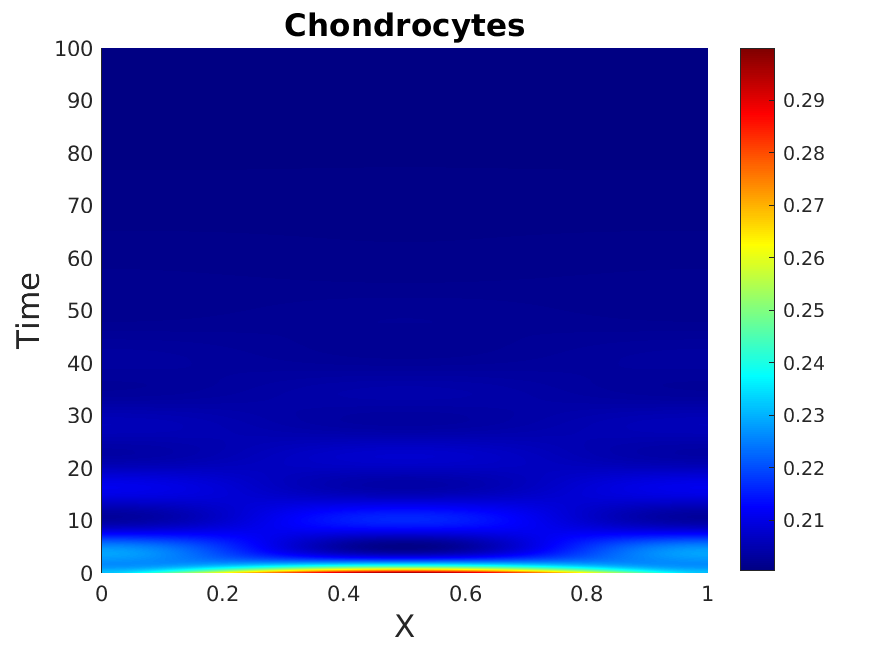}
	\caption{Chondrocytes for $b=1.8$}
\end{subfigure}	
\begin{subfigure}{0.32\textwidth}
	\centering
	\includegraphics[width=\textwidth]{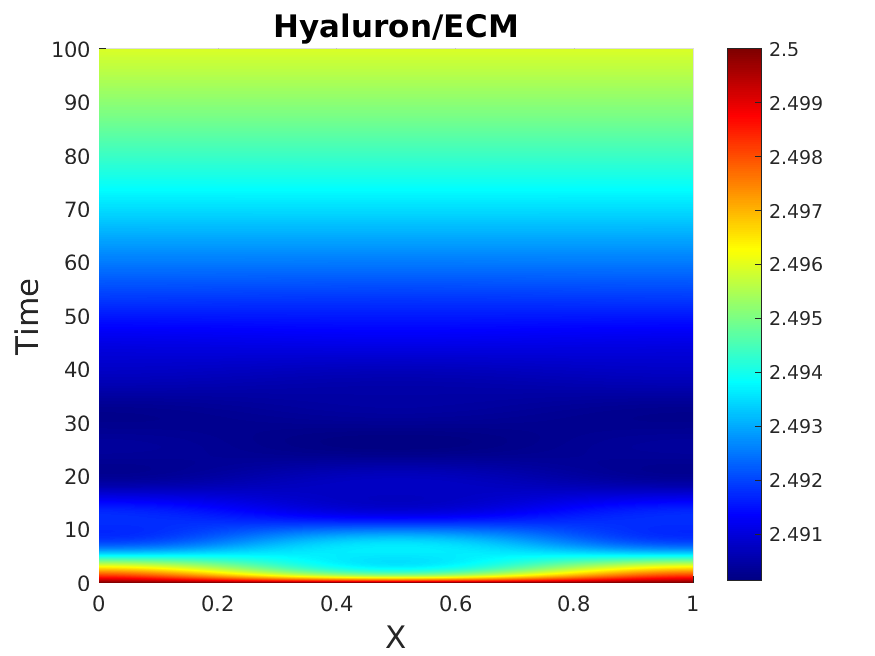}
	\caption{Hyaluron \& ECM for $b=1.8$}
\end{subfigure}
\caption[]{ADSC, chondrocyte, and hyaluron/ECM density for Scenario 1. Upper row: $b>b_c$, lower row $b<b_c$.}\label{fig:1}
	\end{figure}

\noindent
Figure \ref{fig:1} shows the simulated patterns for Scenario 1 when the tactic sensitivity coefficient $b$ exceeds the critical bifurcation value $b_c$ (upper row of plots) and when it is below it (lower row). The former situation leads as expected to oscillatory behavior of all three solution components, accentuated for ADSC and chondrocytes, which infer motility. The oscillations are very slowly damped, mainly due to the cells diffusing and spreading within the whole simulated domain. When $b>b_c$ the incipient perturbation of the system's steady state is quickly losing its effect, which is in line with the result in Proposition \ref{prop:1}.\\

\noindent
Using as initial condition the same perturbation of steady-states, but applying it to different components of the solution leads to changes in the overall behavior of the system, as Figures \ref{fig:2} and \ref{fig:3} produced by Scenarios 2 and 3 are showing, respectively. The former includes a perturbation of ADSC density and Figure \ref{fig:2} exhibits a behavior similar to that in Figure \ref{fig:1}, with the difference that the solution is stabilizing faster and at lower densities. Scenario 3 features the same perturbation, but of hyaluron and ECM, i.e. of the tactic signal. The effect is substantial: the oscillations are far weaker, even for $b>b_c$ and damped rapidly, while higher densities of ADSCs, chondrocytes, and hylauron/ECM are obtained. This seems to be the most favorable case for the envisaged tissue regeneration.\\

	\noindent
\textbf{Scenario 2:} $x\in [0,1]$, perturbed initial amount of ADSCs:	
\begin{align*}
c_1 (x,0)= c_1^*+0.1\exp(-(x-.5)^2/0.2), \ c_2 (x,0)= c_2^*, \ h (x,0)= h^*,\ b=3.7\ (b=1.8).
\end{align*}

	\begin{figure}[!htbp]
	\begin{subfigure}{0.32\textwidth}
		\centering
		\includegraphics[width=\textwidth]{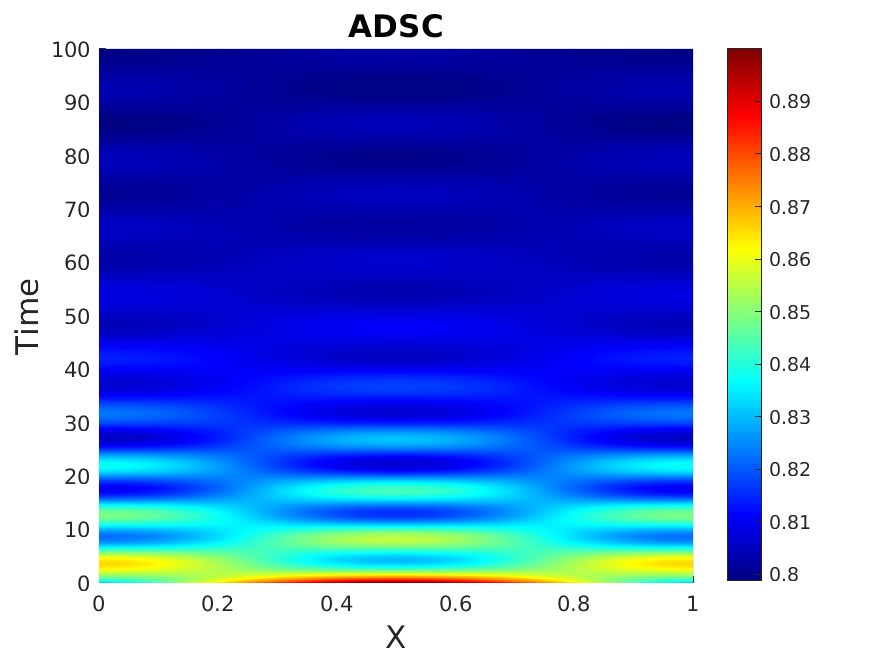}
		\caption{ADSCs for $b=3.7$}
		\label{}
	\end{subfigure}
	\begin{subfigure}{0.32\textwidth}
		\centering
		\includegraphics[width=\textwidth]{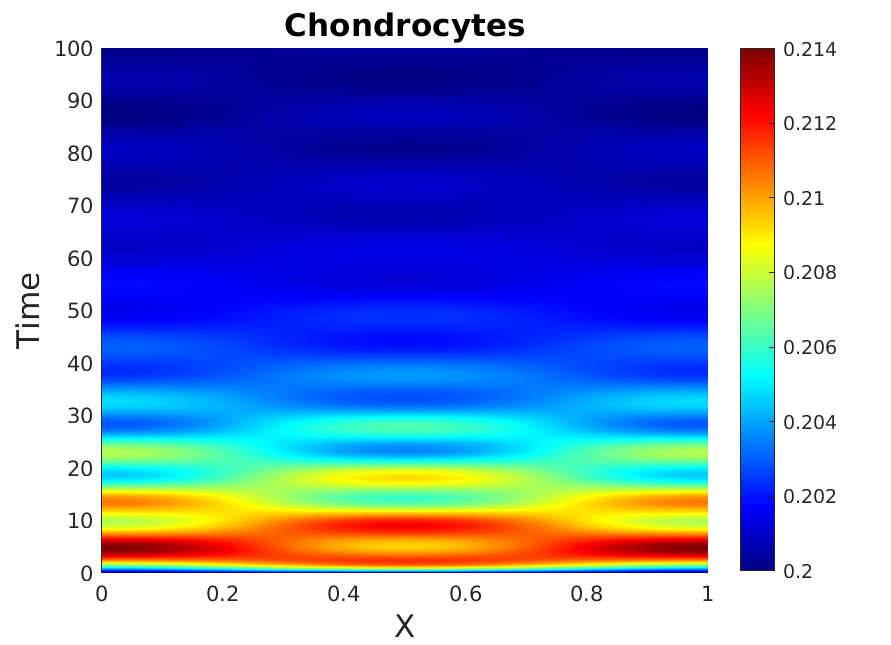}
		\caption{Chondrocytes for $b=3.7$}
	\end{subfigure}	
	\begin{subfigure}{0.32\textwidth}
		\centering
		\includegraphics[width=\textwidth]{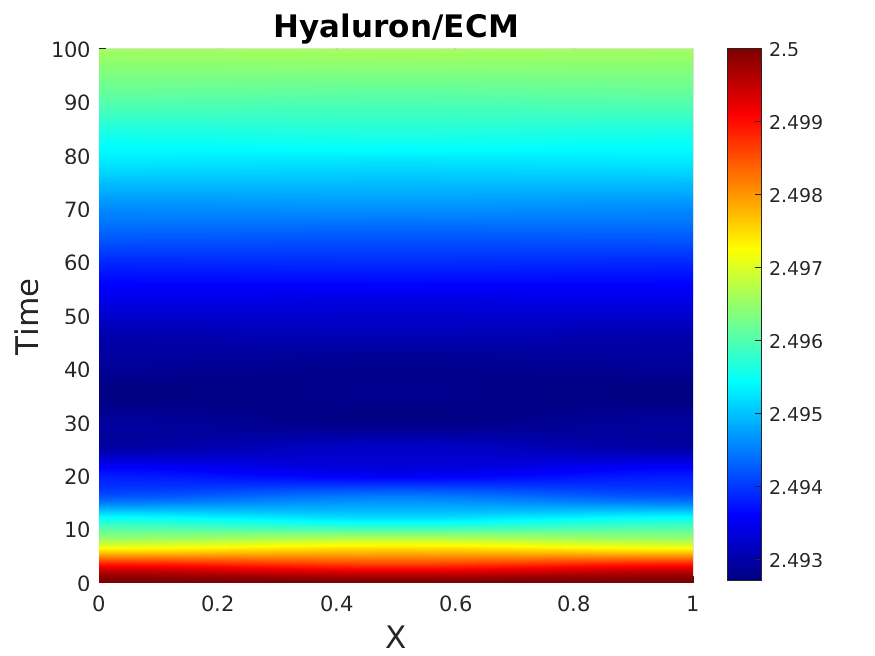}
		\caption{Hyaluron \& ECM for $b=3.7$}
	\end{subfigure}	\\
	\begin{subfigure}{0.32\textwidth}
		\centering
		\includegraphics[width=\textwidth]{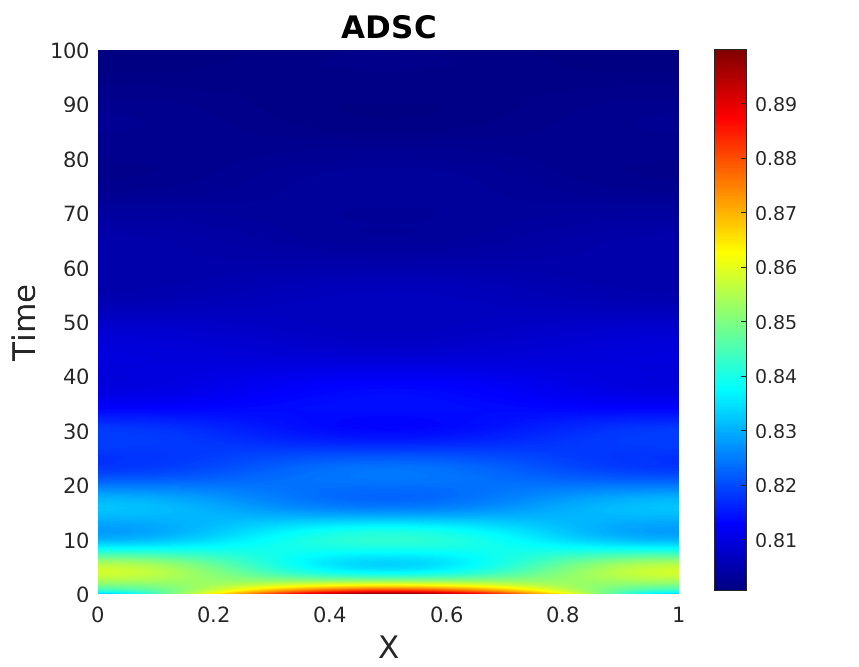}
		\caption{ADSCs for $b=1.8$}
		\label{}
	\end{subfigure}
	\begin{subfigure}{0.32\textwidth}
		\centering
		\includegraphics[width=\textwidth]{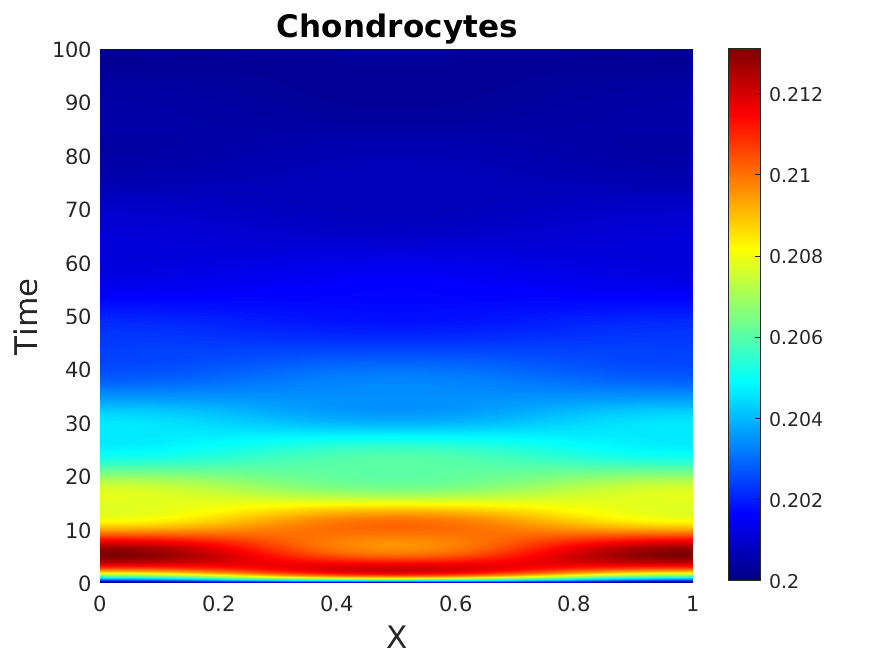}
		\caption{Chondrocytes for $b=1.8$}
	\end{subfigure}	
	\begin{subfigure}{0.32\textwidth}
		\centering
		\includegraphics[width=\textwidth]{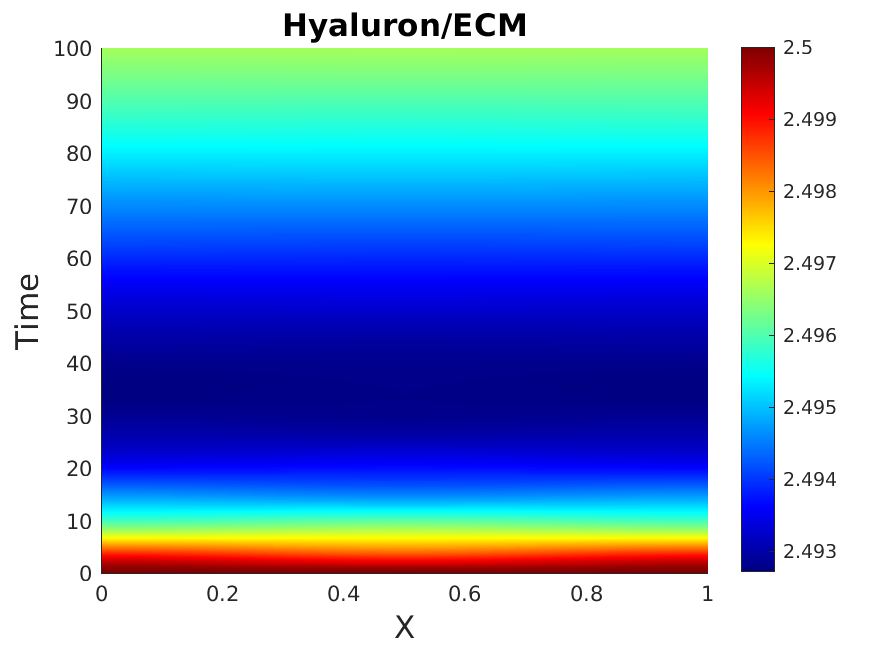}
		\caption{Hyaluron \& ECM for $b=1.8$}
	\end{subfigure}
	\caption[]{ADSC, chondrocyte, and hyaluron/ECM density for Scenario 2. Upper row: $b>b_c$, lower row $b<b_c$.}\label{fig:2}
\end{figure}

\noindent
\textbf{Scenario 3:} $x\in [0,1]$, perturbed initial amount of hyaluron \& ECM:	
\begin{align*}
c_1 (x,0)= c_1^*, \ c_2 (x,0)= c_2^*, \ h (x,0)= h^*+0.1\exp(-(x-.5)^2/0.2),\ b=3.7.
\end{align*}

	\begin{figure}[!htbp]
	\begin{subfigure}{0.32\textwidth}
		\centering
		\includegraphics[width=\textwidth]{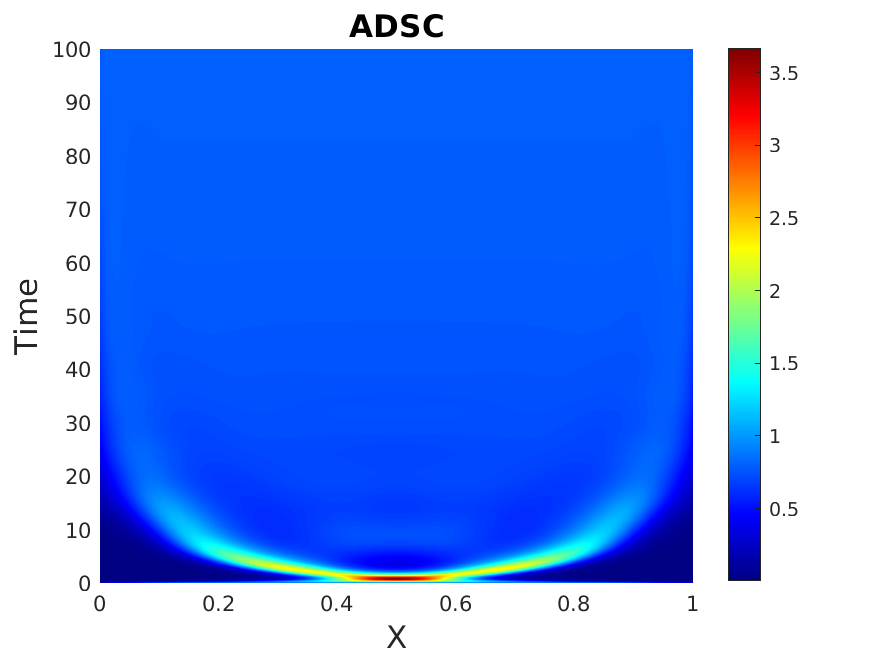}
		\caption{ADSCs for $b=3.7$}
		\label{}
	\end{subfigure}
	\begin{subfigure}{0.32\textwidth}
		\centering
		\includegraphics[width=\textwidth]{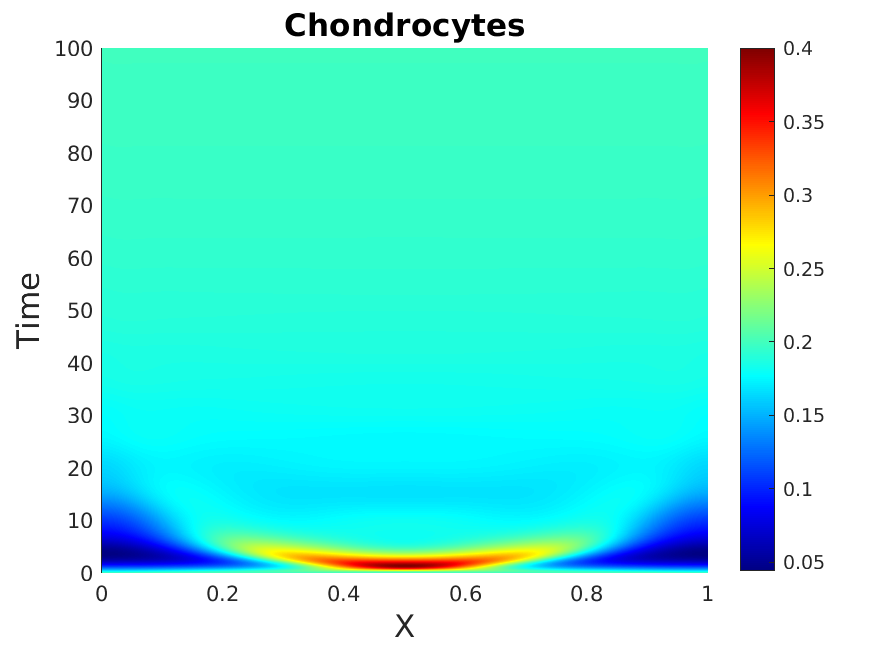}
		\caption{Chondrocytes for $b=3.7$}
	\end{subfigure}	
	\begin{subfigure}{0.32\textwidth}
		\centering
		\includegraphics[width=\textwidth]{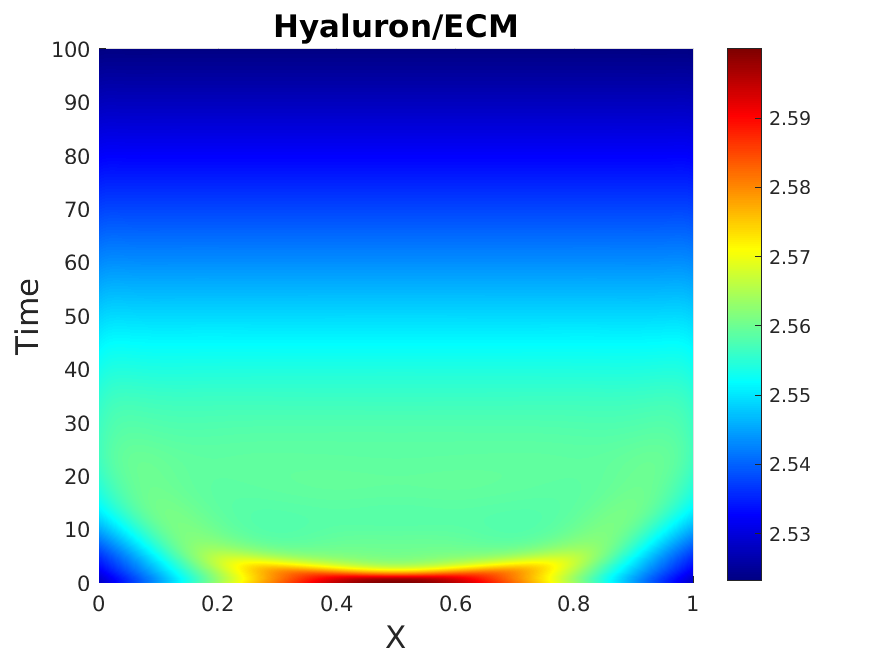}
		\caption{Hyaluron \& ECM for $b=3.7$}
	\end{subfigure}
	\caption[]{ADSC, chondrocyte, and hyaluron/ECM density for Scenario 3, $b>b_c$.}\label{fig:3}
\end{figure}

\noindent
In the following Scenario 4 we consider initial periodic perturbations of the ADSC and hyaluron/ECM steady-states and enlarge the simulation domain ten times. The simulation results are shown in Figure \ref{fig:4}. The periodic patterns are somewhat reminiscent to the used perturbation with cosines, but only occur in this pregnance when the domain is large enough. The alternation of large and lower densities in the solution components is due to ADSCs performing taxis towards gradients of $h$, whose dynamics is almost entirely controled by chondrocytes. As in the previous figures, a small enough tactic sensitivity $b<b_c$ leads to quicker stabilization to lower densities of cells (of both phenotypes) and tissue.\\

\noindent
\textbf{Scenario 4:} $x\in [0,10]$, perturbed initial amounts of ADSC and hyaluron \& ECM:	
\begin{align*}
&c_1 (x,0)= c_1^*+.01\cos(4\pi (x-5)/10), \ c_2 (x,0)= c_2^*, \ h (x,0)= h^*+.01\cos(4\pi (x-5)/10),\\
&b=3.7\ (b=1.8).
\end{align*}

	\begin{figure}[!htbp]
	\begin{subfigure}{0.32\textwidth}
		\centering
		\includegraphics[width=\textwidth]{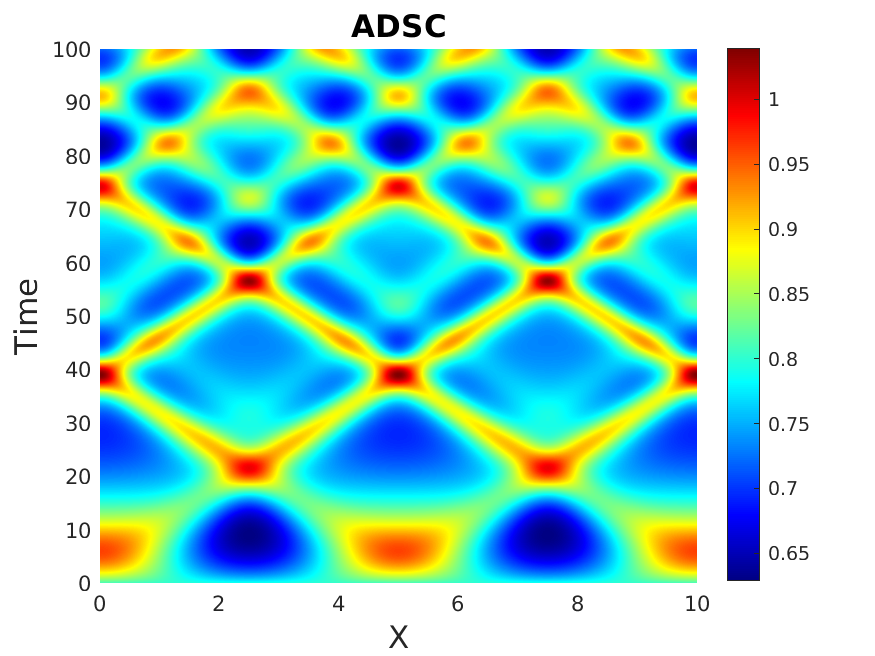}
		\caption{ADSCs for $b=3.7$}
		\label{}
	\end{subfigure}
	\begin{subfigure}{0.32\textwidth}
		\centering
		\includegraphics[width=\textwidth]{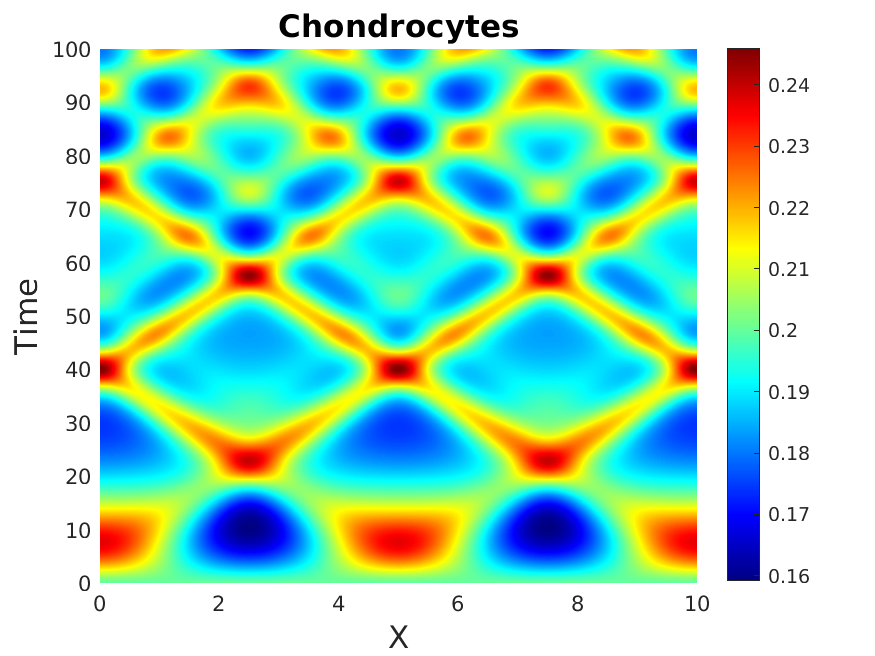}
		\caption{Chondrocytes for $b=3.7$}
	\end{subfigure}	
	\begin{subfigure}{0.32\textwidth}
		\centering
		\includegraphics[width=\textwidth]{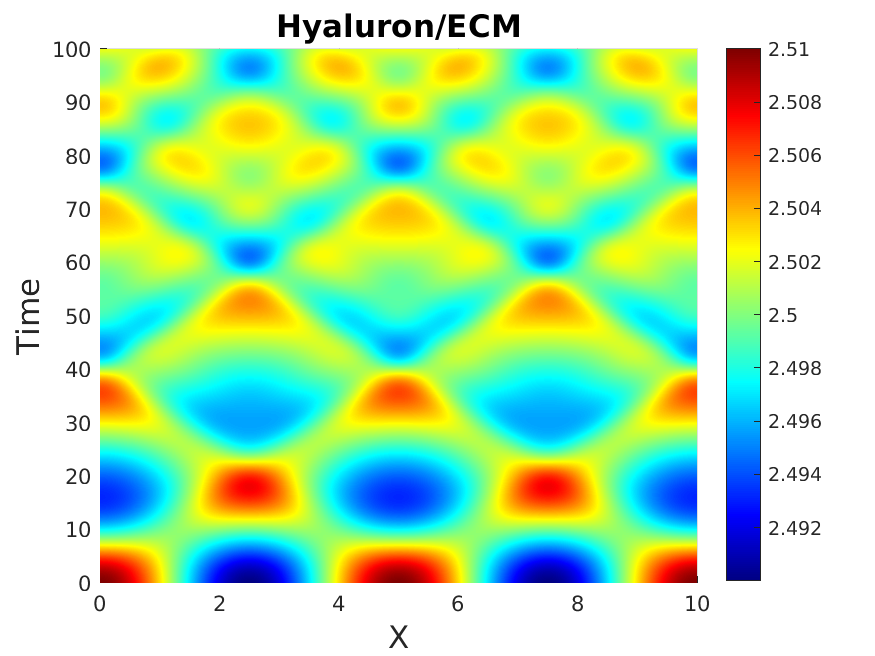}
		\caption{Hyaluron \& ECM for $b=3.7$}
	\end{subfigure}	\\
	\begin{subfigure}{0.32\textwidth}
		\centering
		\includegraphics[width=\textwidth]{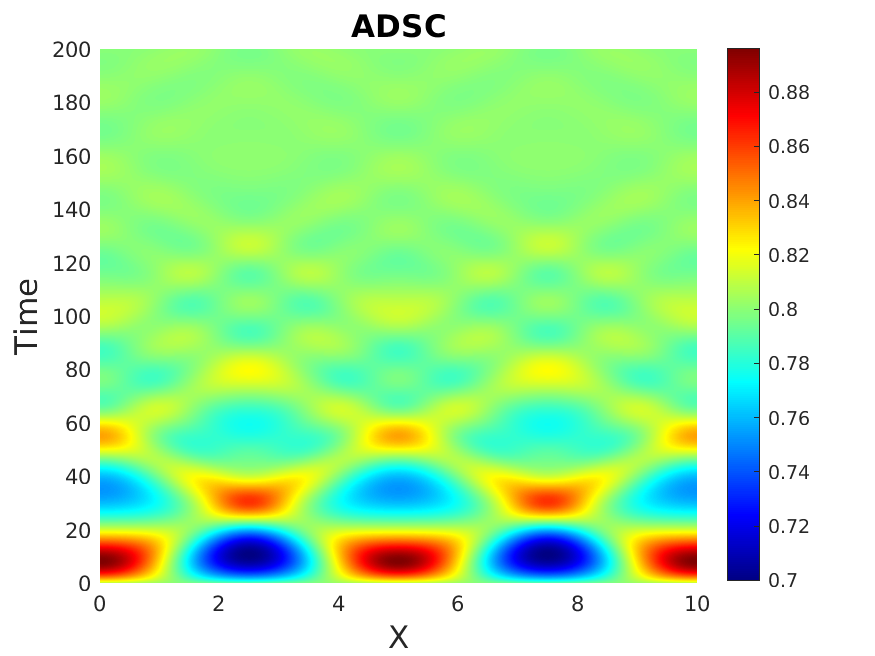}
		\caption{ADSCs for $b=1.8$}
		\label{}
	\end{subfigure}
	\begin{subfigure}{0.32\textwidth}
		\centering
		\includegraphics[width=\textwidth]{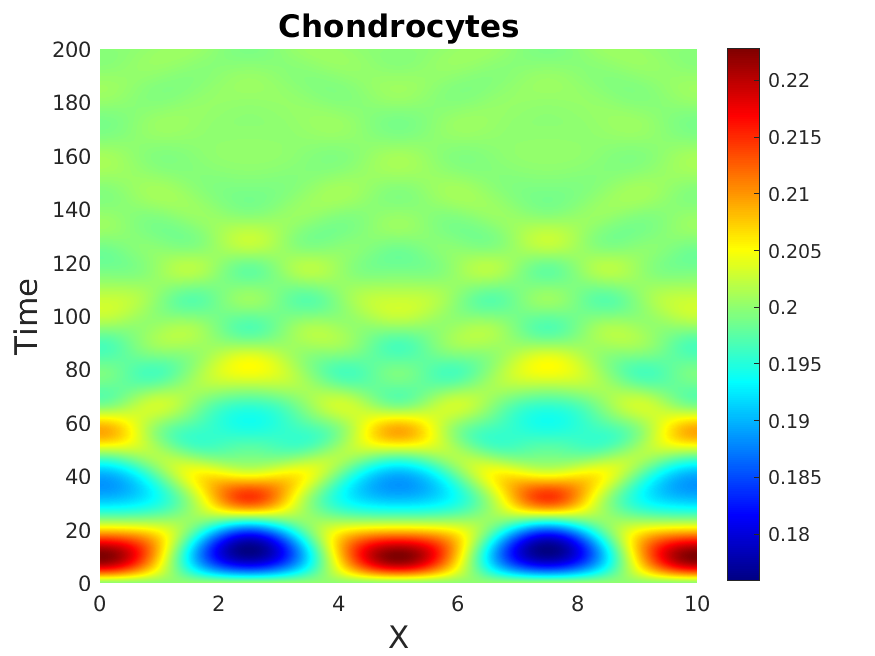}
		\caption{Chondrocytes for $b=1.8$}
	\end{subfigure}	
	\begin{subfigure}{0.32\textwidth}
		\centering
		\includegraphics[width=\textwidth]{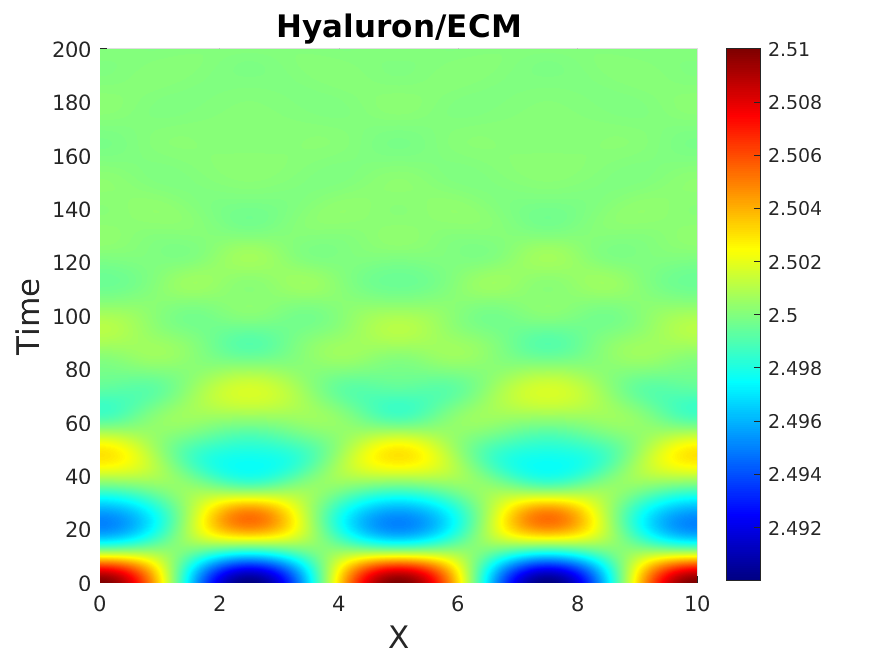}
		\caption{Hyaluron \& ECM for $b=1.8$}
	\end{subfigure}
	\caption[]{ADSC, chondrocyte, and hyaluron/ECM density for Scenario 4. Upper row: $b>b_c$, lower row $b<b_c$.}\label{fig:4}
\end{figure}

\subsection{2D simulations}
		
\noindent		
The 2D discretization of the model is done using the finite difference method (FDM). A standard central difference scheme is used to discretize the diffusion parts of the equations for ADSCs and chondrocytes, while the taxis term in the ADSC equation was handled by a first order upwind scheme. For the time derivatives an implicit-explicit (IMEX) scheme is used, thereby treating the diffusion parts implicitly and discretizing the taxis and source terms with an explicit Euler method. \\[-2ex]

\noindent
The initial conditions considered in this case are
\begin{align}\label{eq:ICs-2D}
&c_1 (x,0)= c_1^*+10^{-6}\exp(-((x-5)^2+(y-5)^2)/.2), \notag \\ 
&c_2 (x,0)= c_2^*+10^{-9}\exp(-((x-5)^2+(y-5)^2)/.2), \ h (x,0)= 1+10^{-6} U,
\end{align}
where $U$ represents a uniform distribution within $(0,1)$. Figure \ref{fig:IC-2d} shows their plots.\\

	\begin{figure}[!htbp]
	\begin{subfigure}{0.32\textwidth}
		\centering
		\includegraphics[width=\textwidth]{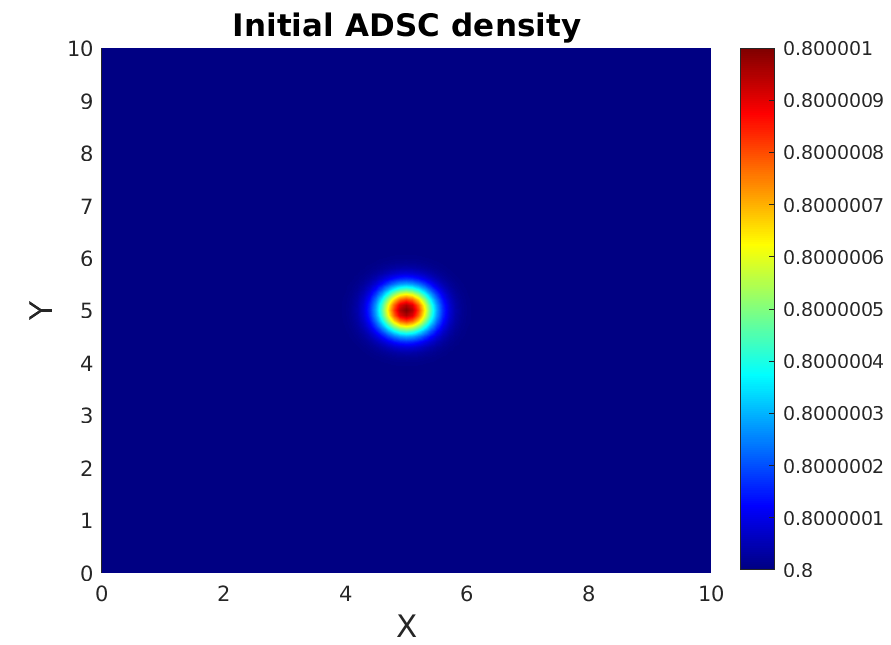}
		%\caption{ADSCs for $b=3.7$}
		%\label{}
	\end{subfigure}
	\begin{subfigure}{0.32\textwidth}
		\centering
		\includegraphics[width=\textwidth]{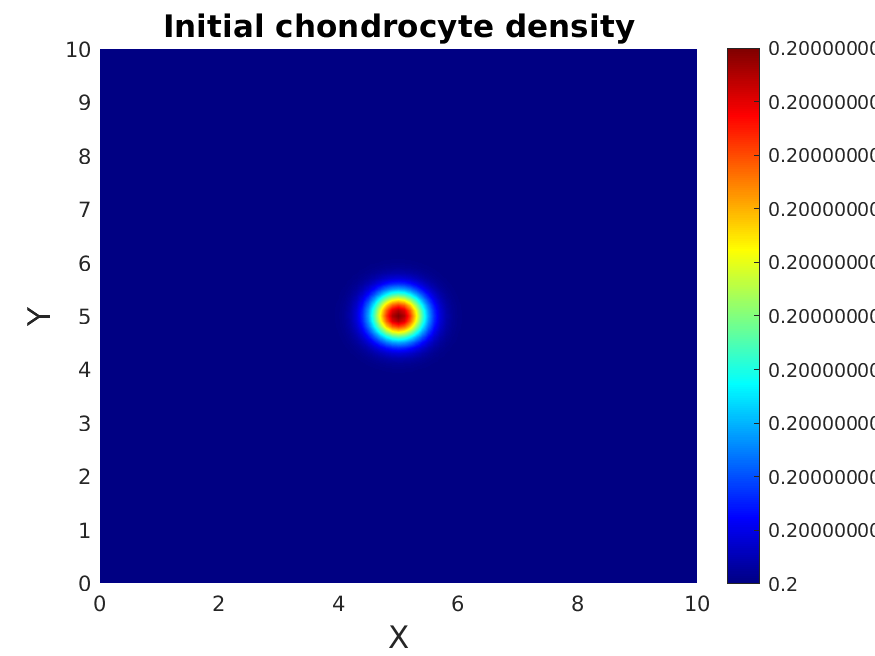}
		%\caption{Chondrocytes for $b=3.7$}
	\end{subfigure}	
	\begin{subfigure}{0.32\textwidth}
		\centering
		\includegraphics[width=\textwidth]{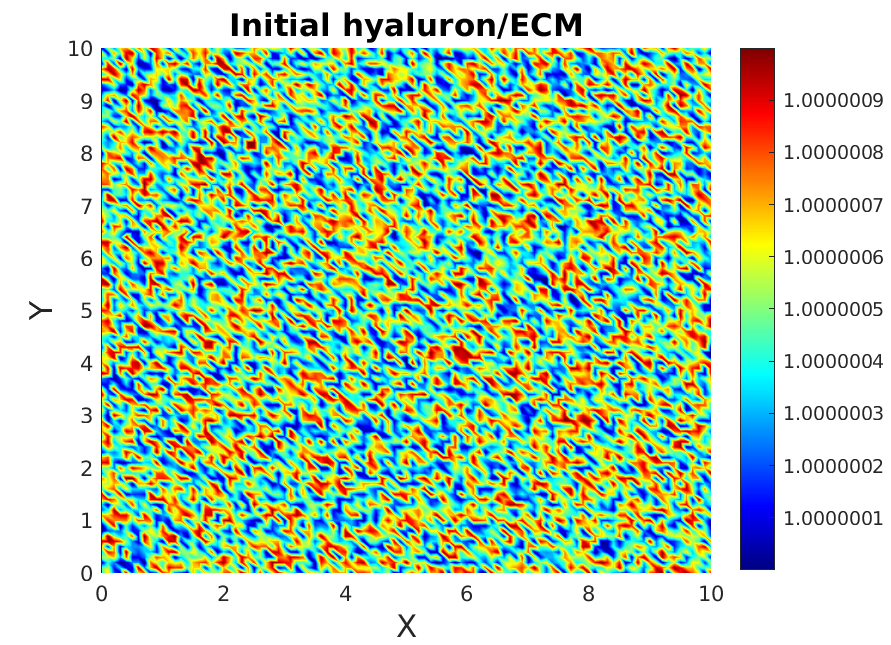}
		%\caption{Hyaluron \& ECM for $b=3.7$}
	\end{subfigure}
	\caption[]{Initial conditions \eqref{eq:ICs-2D} for ADSC, chondrocyte, and hyaluron/ECM density.}\label{fig:IC-2d}
\end{figure}

\begin{figure}[!htbp]
	\begin{subfigure}{0.32\textwidth}
		\centering
		\includegraphics[width=\textwidth]{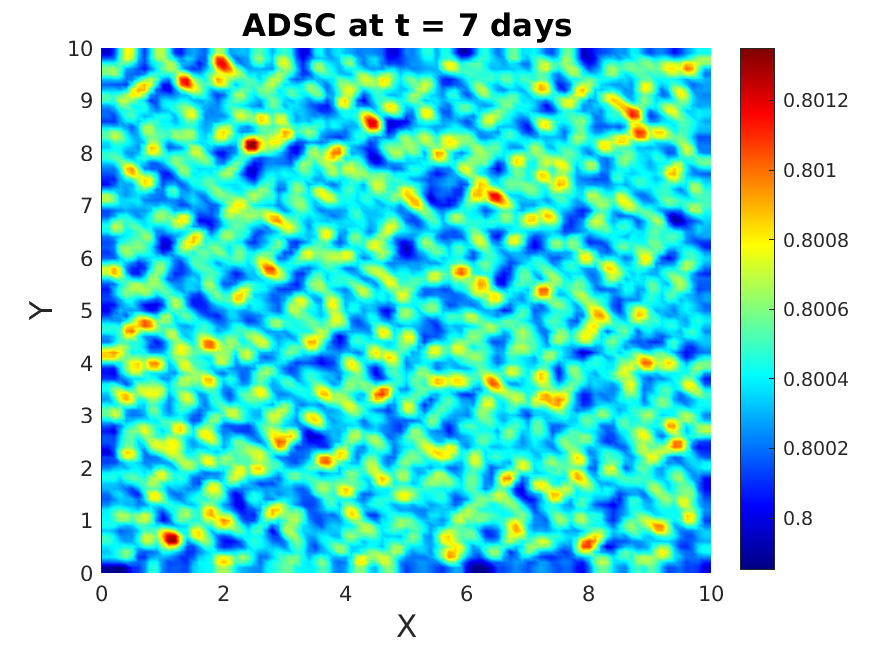}
		\caption{ADSCs at 7 days}
		\label{}
	\end{subfigure}
	\begin{subfigure}{0.32\textwidth}
		\centering
		\includegraphics[width=\textwidth]{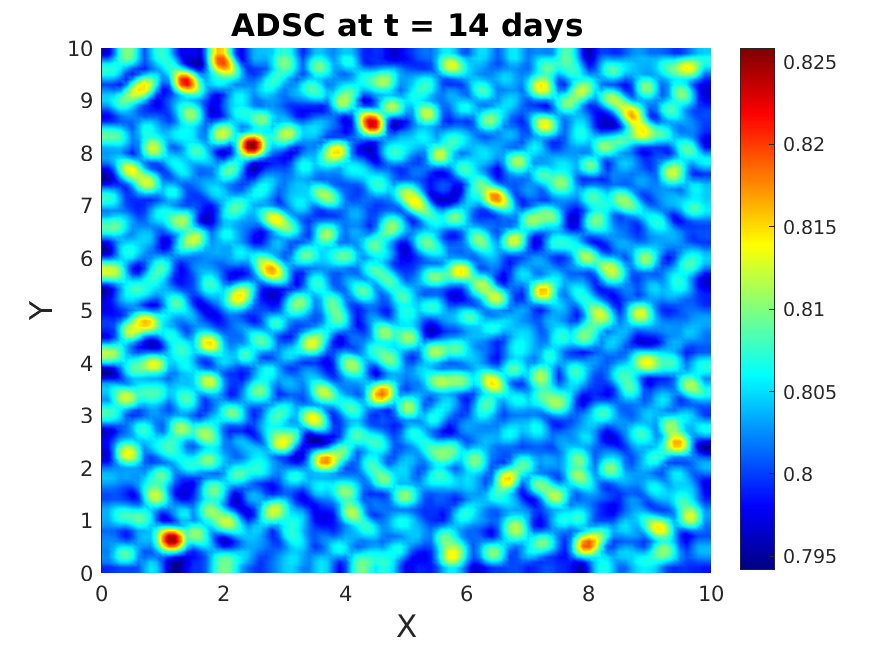}
		\caption{ADSCs at 14 days}
	\end{subfigure}	
	\begin{subfigure}{0.32\textwidth}
		\centering
		\includegraphics[width=\textwidth]{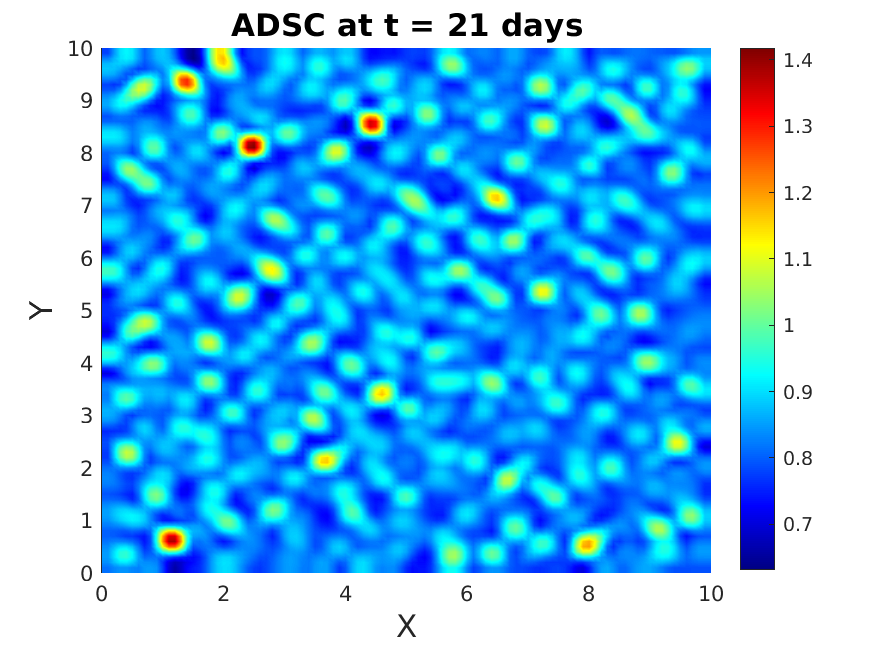}
		\caption{ADSCs at 21 days }
	\end{subfigure}	\\[1ex]
	\begin{subfigure}{0.32\textwidth}
		\centering
		\includegraphics[width=\textwidth]{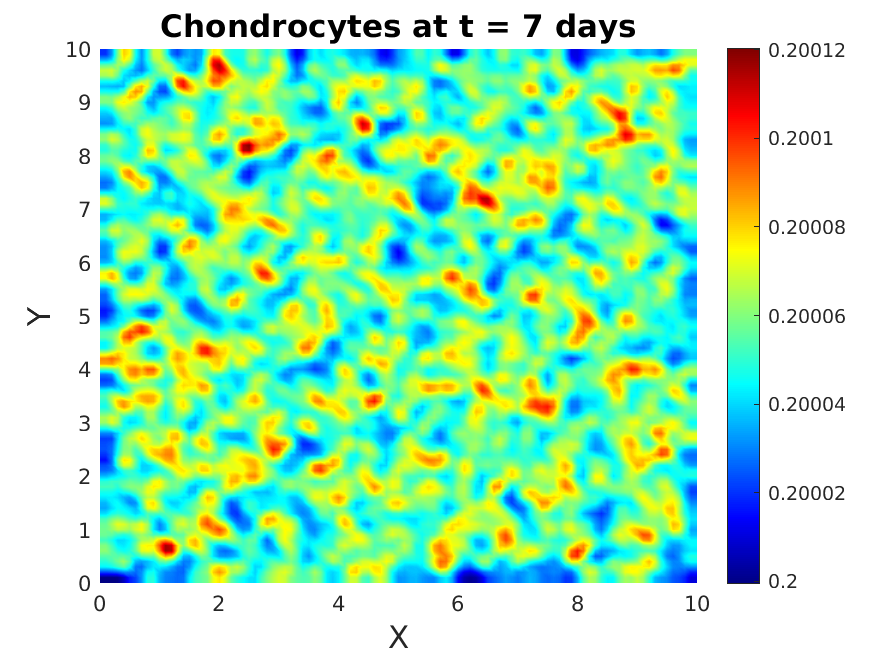}
		\caption{Chondrocytes at 7 days}
		\label{}
	\end{subfigure}
	\begin{subfigure}{0.32\textwidth}
		\centering
		\includegraphics[width=\textwidth]{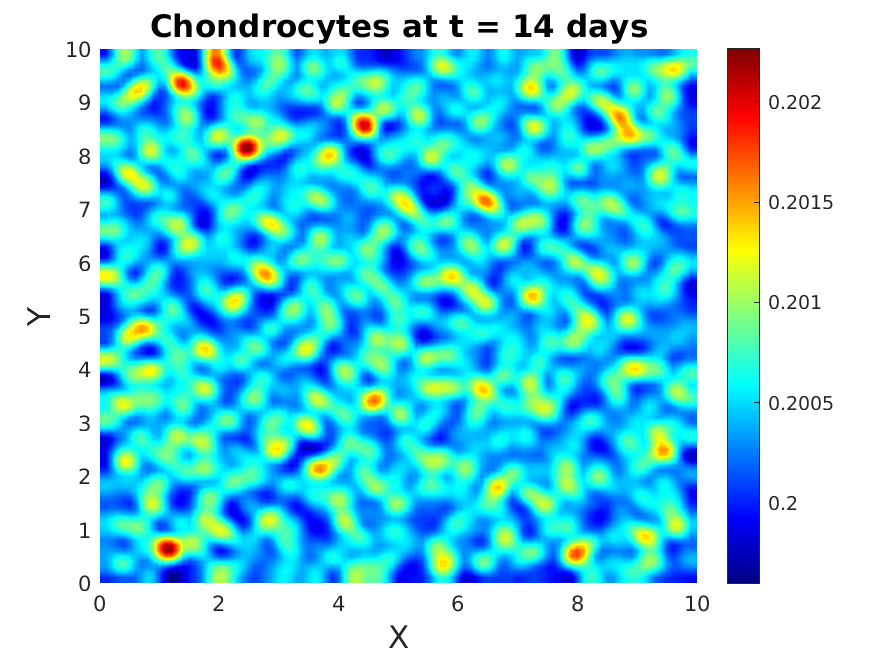}
		\caption{Chondrocytes at 14 days}
	\end{subfigure}	
	\begin{subfigure}{0.32\textwidth}
		\centering
		\includegraphics[width=\textwidth]{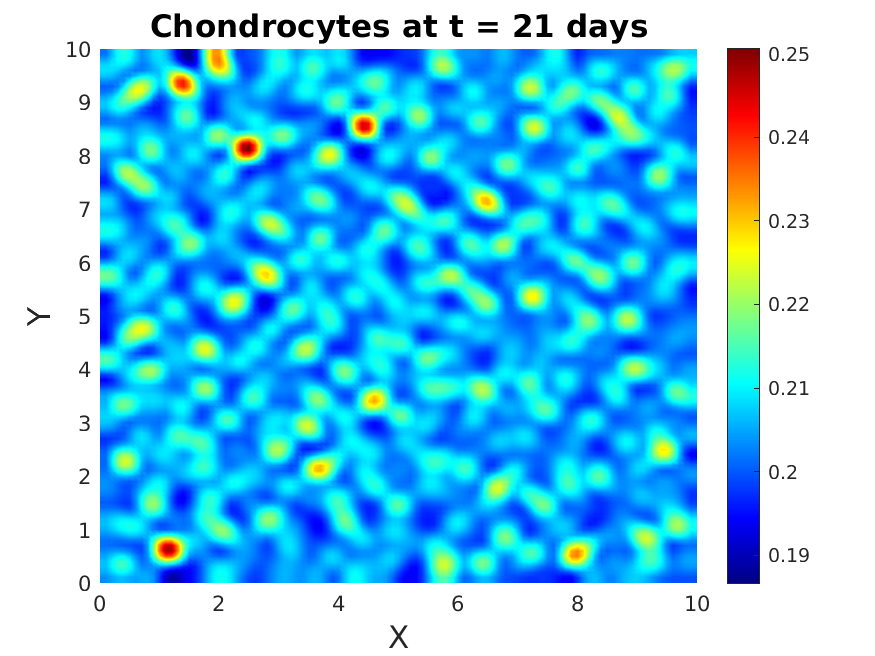}
		\caption{Chondrocytes at 14 days }
	\end{subfigure}\\[1ex]
\begin{subfigure}{0.32\textwidth}
\centering
\includegraphics[width=\textwidth]{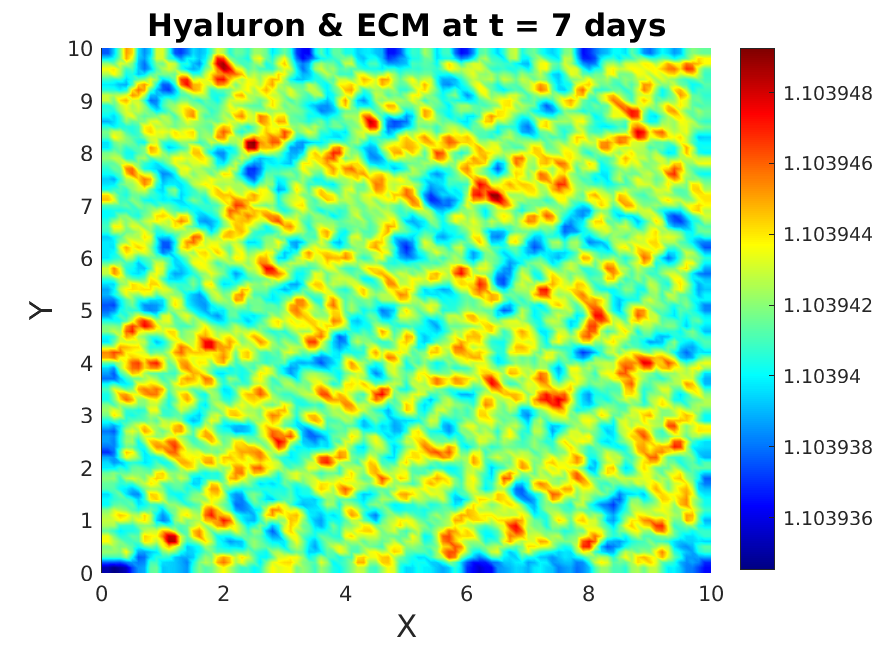}
\caption{Hyaluron \& ECM at 7 days }
\label{}
\end{subfigure}
\begin{subfigure}{0.32\textwidth}
\centering
\includegraphics[width=\textwidth]{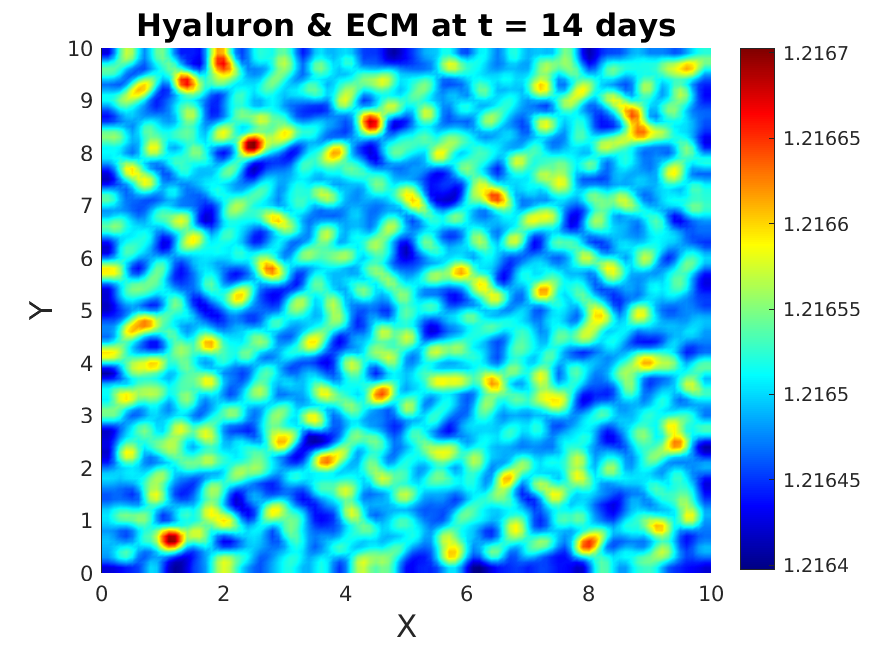}
\caption{Hyaluron \& ECM at 14 days }
\end{subfigure}	
\begin{subfigure}{0.32\textwidth}
\centering
\includegraphics[width=\textwidth]{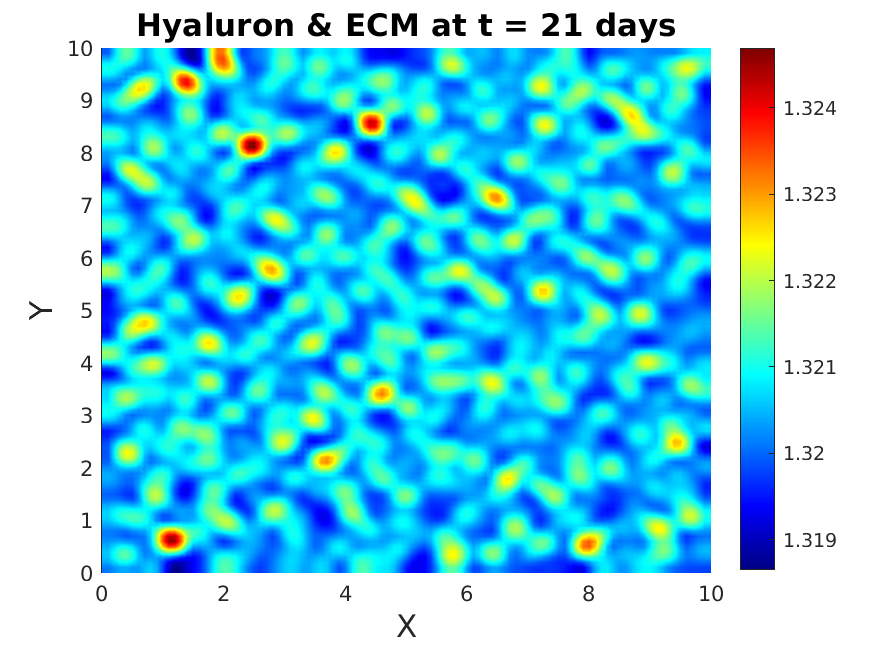}
\caption{Hyaluron \& ECM at 21 days }
\end{subfigure}
	\caption[]{ADSC, chondrocyte, and hyaluron/ECM density at 7, 14, and 21 days, initial conditions \eqref{eq:ICs-2D}, $b>b_c$ (here $b=3.7$)}\label{fig:5}
\end{figure}

\noindent
Figure \ref{fig:5} illustrates the 2D simulation results for system \eqref{model} under conditions \eqref{eq:ICs-2D}. Densities of ADSCs, chondrocytes, and hyaluron/ECM are shown at 7, 14, and 21 days. The tactic sensitivity parameter $b=3.7$ exceeds the threshold obtained in Section \ref{sec:stability} as Hopf bifurcation value. The cells are quickly diffusing from the initial positions. The motility of ADSCs is further enhanced by taxis towards gradients of signal $h$ and the chondrocytes follow, as they are only obtained by ADSC differentiation. This behavior can also be observed in Figure \ref{fig:6}, which shows the same evolution of $c_1,\ c_2,\ h$, but for $b=1.8<b_c$. A higher tactic sensitivity of ADSCs seems to lead as in the 1D case to slightly increased densities of cells and tissue.\\

\begin{figure}[!htbp]
	\begin{subfigure}{0.32\textwidth}
		\centering
		\includegraphics[width=\textwidth]{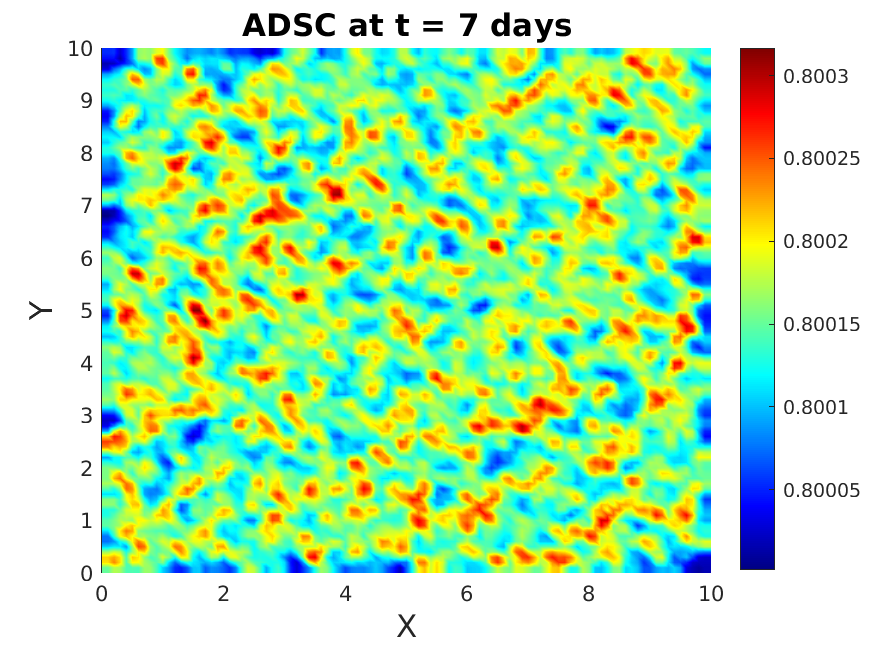}
		\caption{ADSCs at 7 days}
		\label{}
	\end{subfigure}
	\begin{subfigure}{0.32\textwidth}
		\centering
		\includegraphics[width=\textwidth]{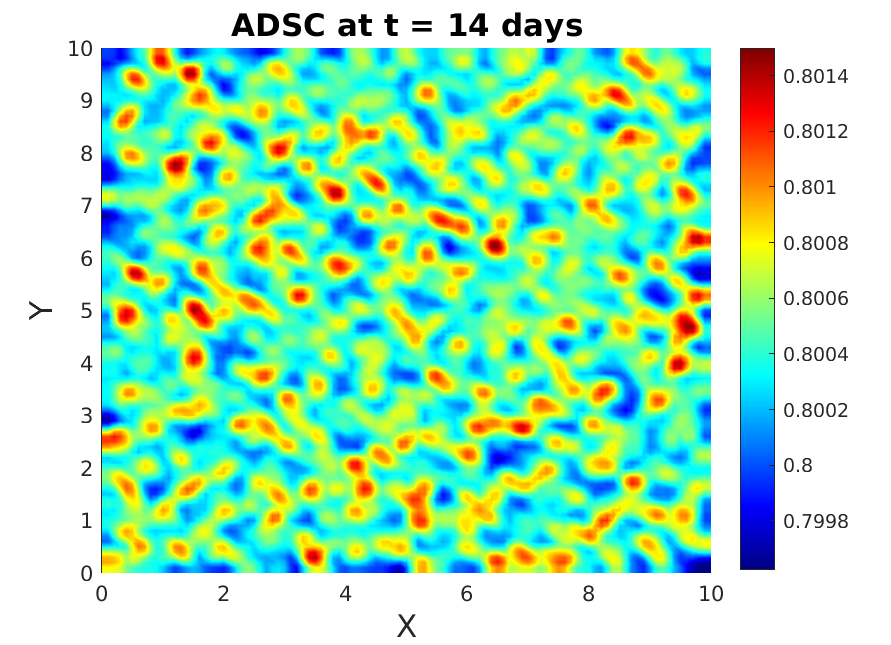}
		\caption{ADSCs at 14 days}
	\end{subfigure}	
	\begin{subfigure}{0.32\textwidth}
		\centering
		\includegraphics[width=\textwidth]{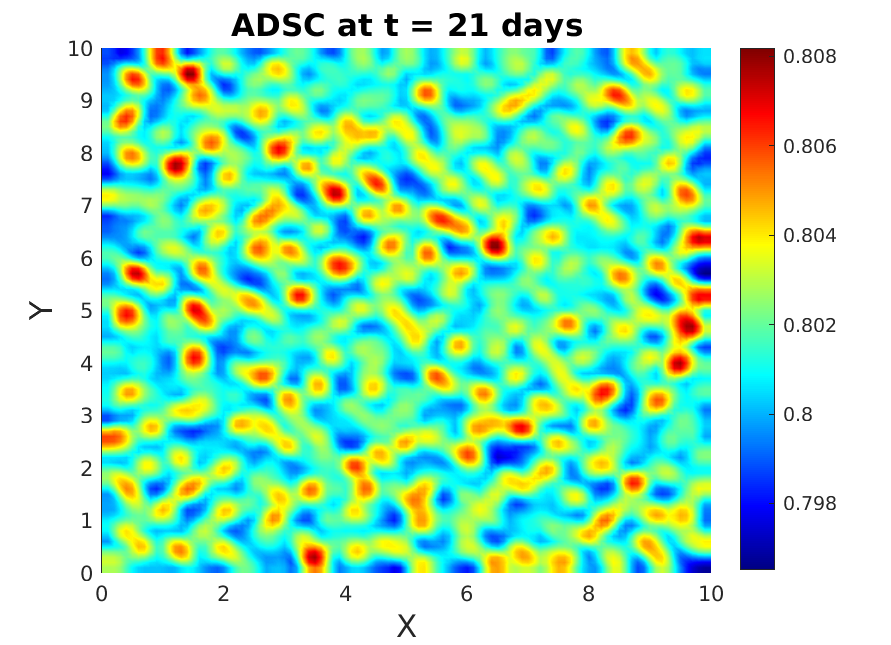}
		\caption{ADSCs at 21 days }
	\end{subfigure}	\\[1ex]
	\begin{subfigure}{0.32\textwidth}
		\centering
		\includegraphics[width=\textwidth]{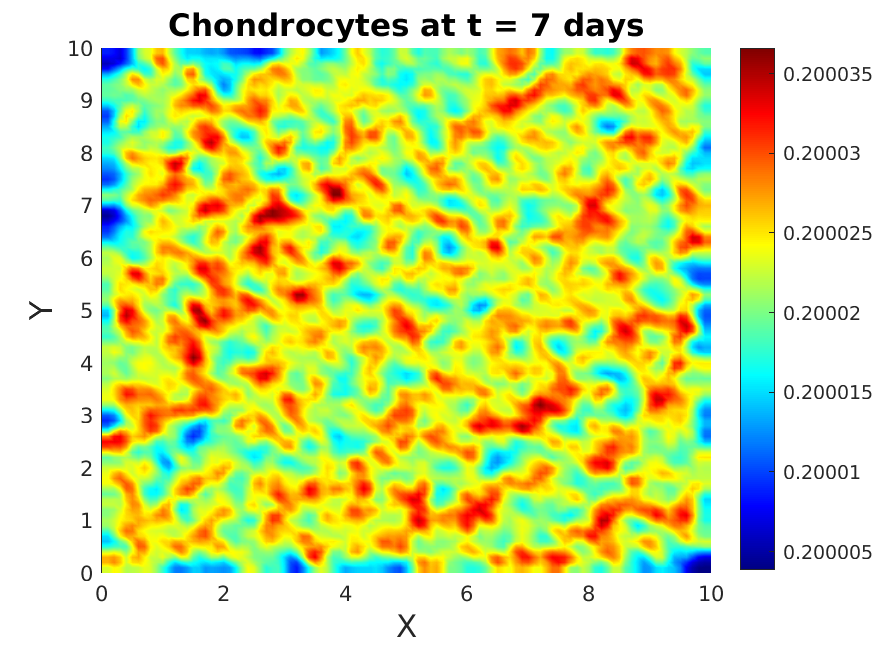}
		\caption{Chondrocytes at 7 days}
		\label{}
	\end{subfigure}
	\begin{subfigure}{0.32\textwidth}
		\centering
		\includegraphics[width=\textwidth]{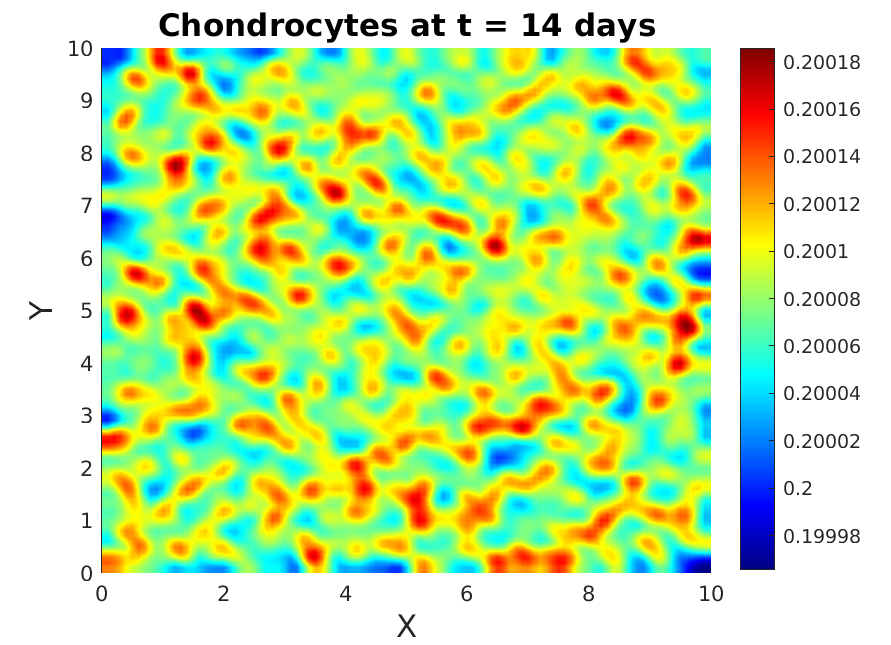}
		\caption{Chondrocytes at 14 days}
	\end{subfigure}	
	\begin{subfigure}{0.32\textwidth}
		\centering
		\includegraphics[width=\textwidth]{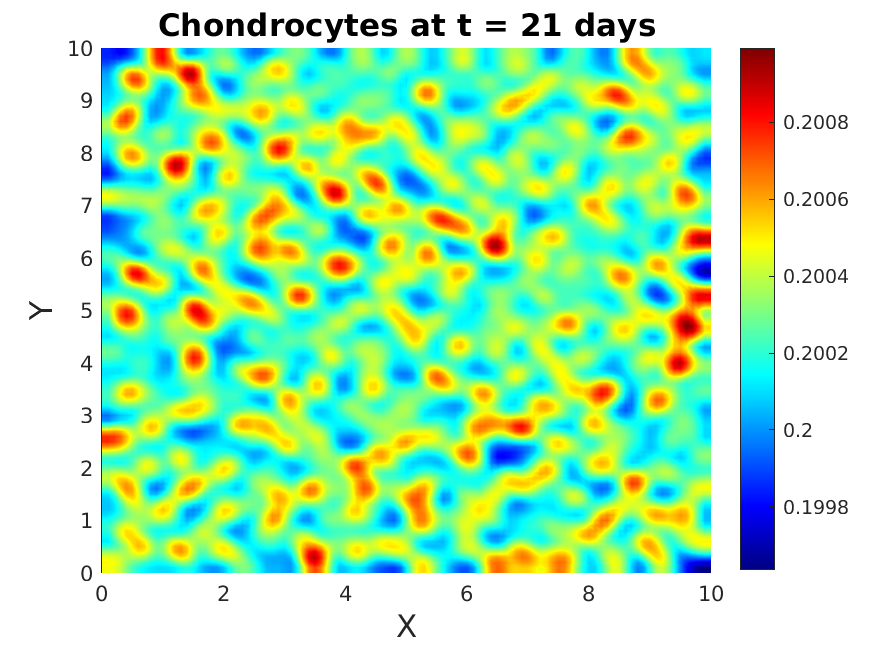}
		\caption{Chondrocytes at 14 days }
	\end{subfigure}\\[1ex]
	\begin{subfigure}{0.32\textwidth}
		\centering
		\includegraphics[width=\textwidth]{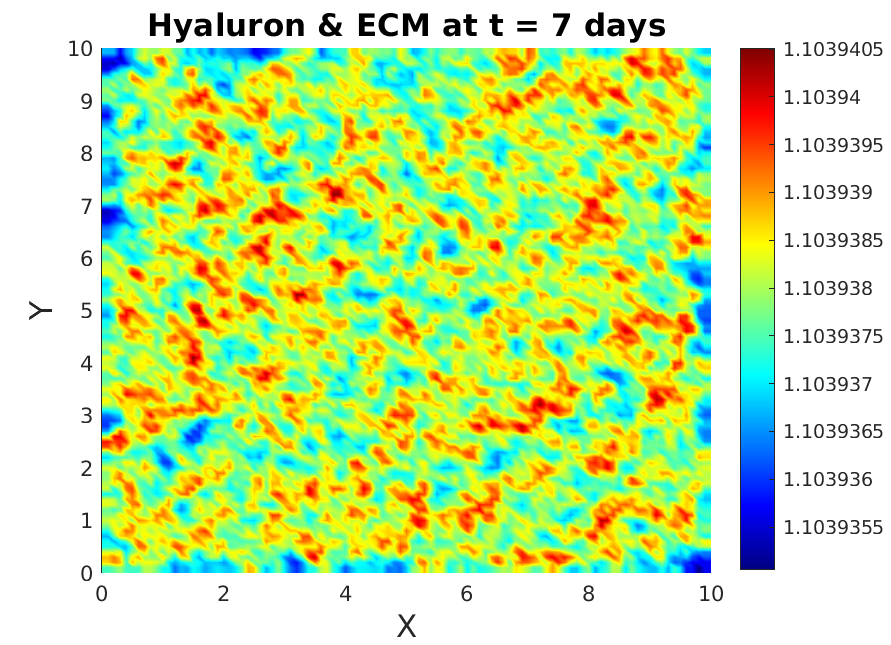}
		\caption{Hyaluron \& ECM at 7 days }
		\label{}
	\end{subfigure}
	\begin{subfigure}{0.32\textwidth}
		\centering
		\includegraphics[width=\textwidth]{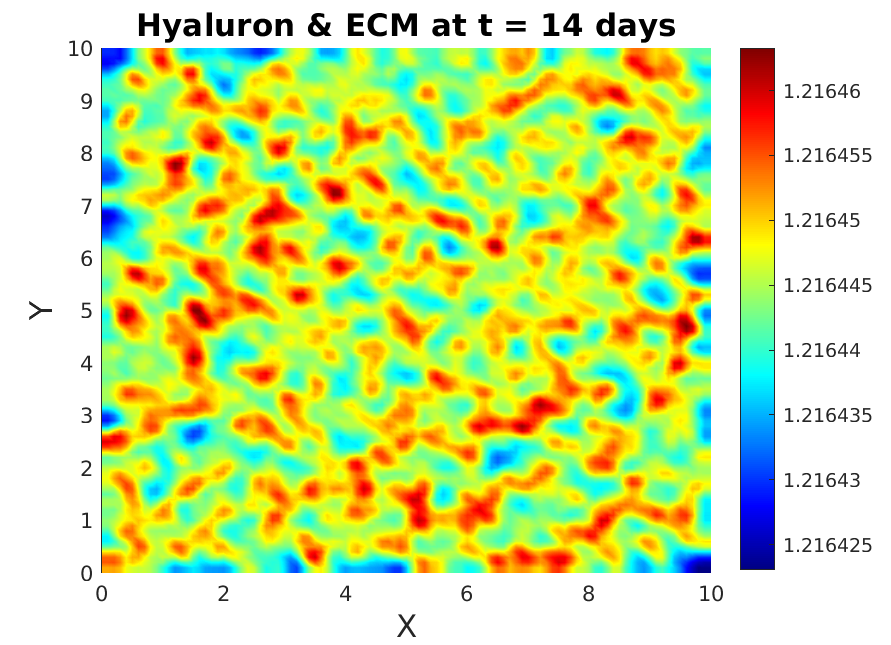}
		\caption{Hyaluron \& ECM at 14 days }
	\end{subfigure}	
	\begin{subfigure}{0.32\textwidth}
		\centering
		\includegraphics[width=\textwidth]{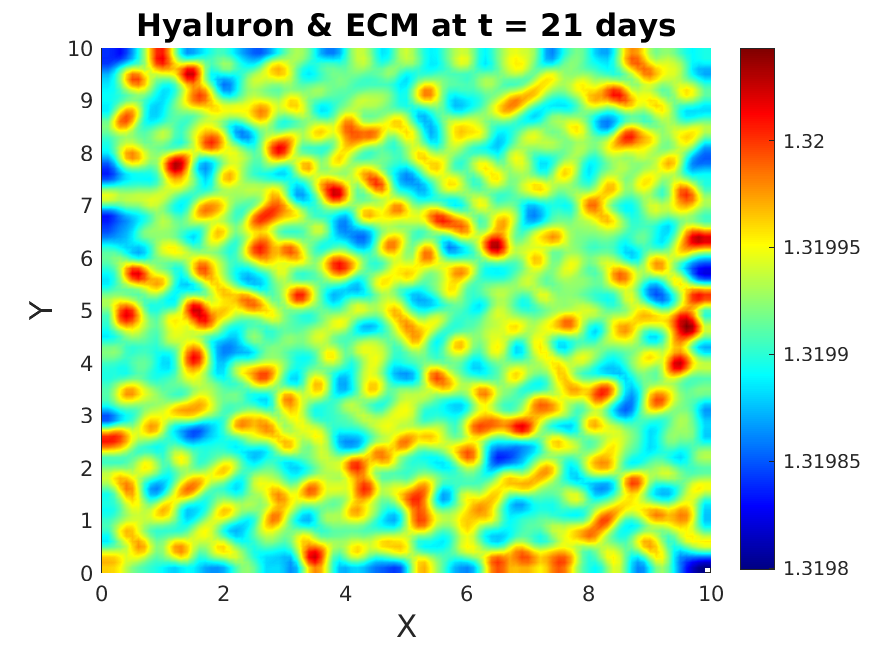}
		\caption{Hyaluron \& ECM at 21 days }
	\end{subfigure}
	\caption[]{ADSC, chondrocyte, and hyaluron/ECM density at 7, 14, and 21 days, initial conditions \eqref{eq:ICs-2D}, $b<b_c$ (here $b=1.8$)}\label{fig:6}
\end{figure}

\noindent
To investigate the effect of initial hyaluron (and hence of tissue) distribution we considered 
\begin{align}\label{eq:ICs-2D-cos}
&c_1 (x,0)= c_1^*+10^{-6}\exp(-((x-5)^2+(y-5)^2)/.2), \\ 
&c_2 (x,0)= c_2^*+10^{-9}\exp(-((x-5)^2+(y-5)^2)/.2), \ h (x,0)= 1+10^{-6} U \cos(4\pi (x-5)/10).\notag 
\end{align}
The corresponding simulation results are shown in Figure \ref{fig:7}. The previously mentioned behavior of cells and tissue is more pregnantly visible: the cells leave the rather concentrated initial blob, the ADSCs follow the tissue signal and make the chondrocytes follow in turn. This allows for higher cell and tissue densities along the 'stripes' generated by the cosine in \eqref{eq:ICs-2D-cos}, suggesting that the initial structure of underlying tissue plays a major role during the regeneration process. Thus, properly designed scaffolds providing support for cell migration can have a relevant influence on the amount and quality of newly produced tissue upon seeding with stem cells and promoting and sustaining differentiation into chondrocytes.\\

\begin{figure}[!htbp]
	\begin{subfigure}{0.32\textwidth}
		\centering
		\includegraphics[width=\textwidth]{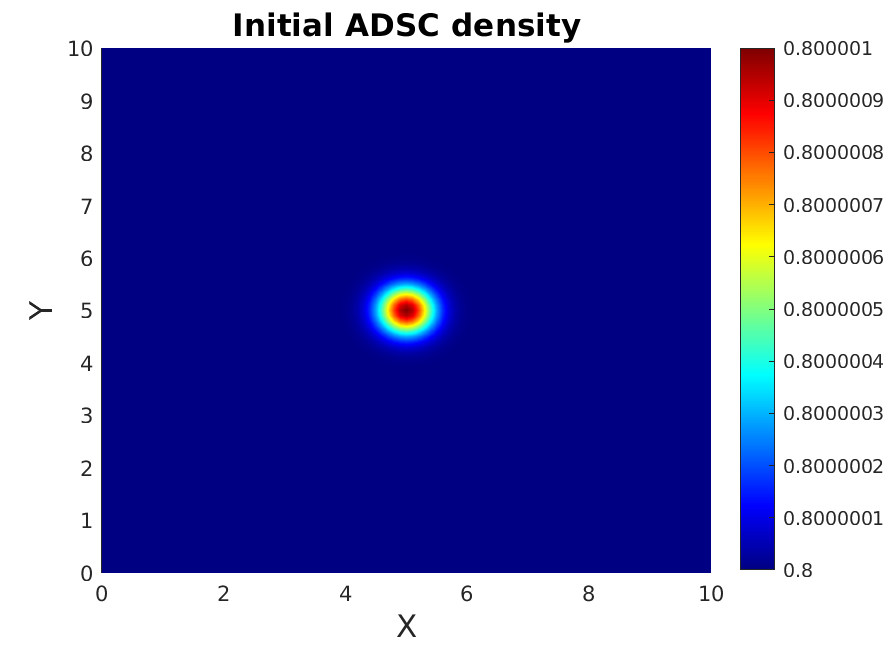}
		%\caption{ADSCs for $b=3.7$}
		%\label{}
	\end{subfigure}
	\begin{subfigure}{0.32\textwidth}
		\centering
		\includegraphics[width=\textwidth]{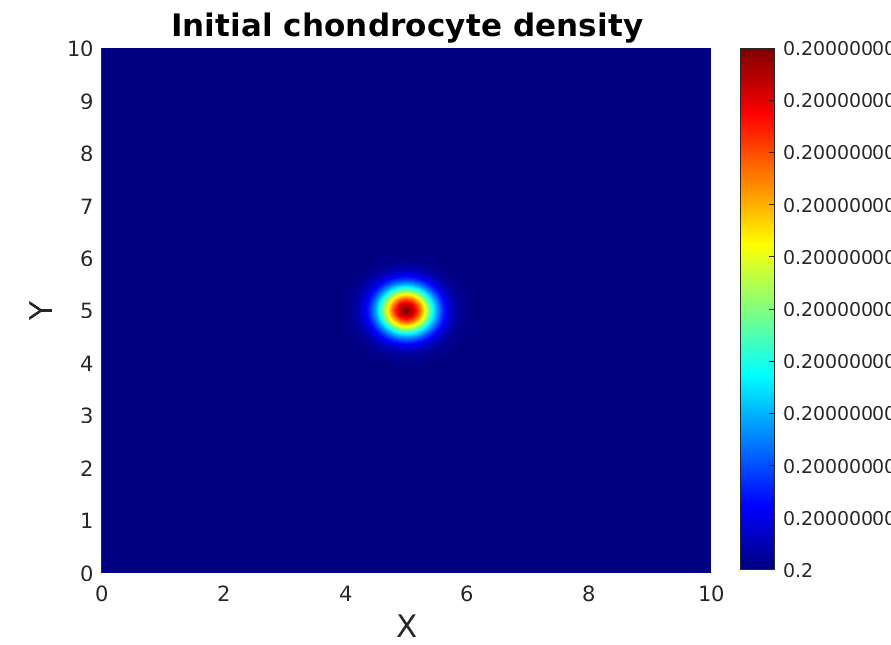}
		%\caption{Chondrocytes for $b=3.7$}
	\end{subfigure}	
	\begin{subfigure}{0.32\textwidth}
		\centering
		\includegraphics[width=\textwidth]{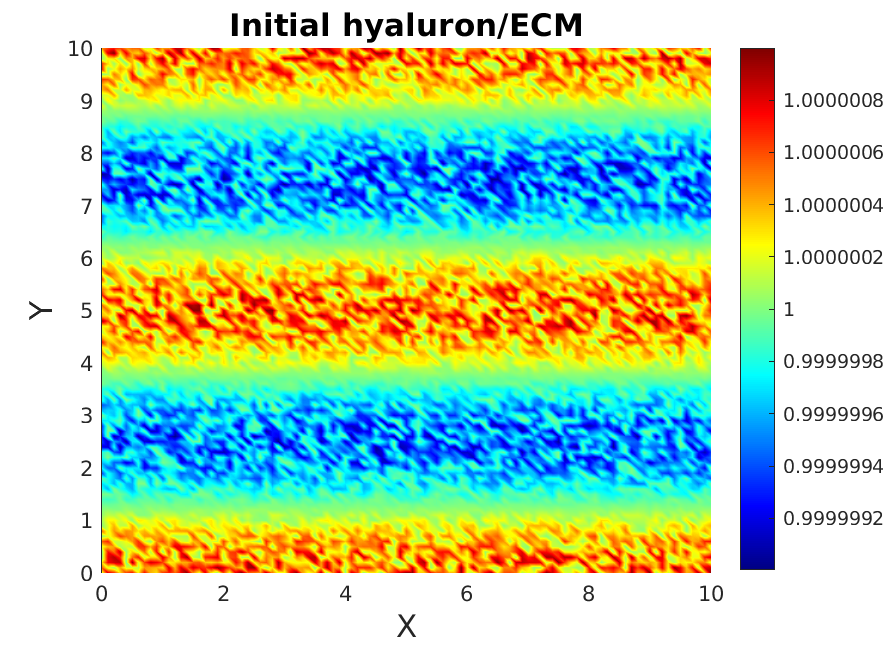}
		%\caption{Hyaluron \& ECM for $b=3.7$}
	\end{subfigure}
	\caption[]{Initial conditions \eqref{eq:ICs-2D-cos} for ADSC, chondrocyte, and hyaluron/ECM density.}\label{fig:IC-2d-cos}
\end{figure}

\begin{figure}[!htbp]
	\begin{subfigure}{0.32\textwidth}
		\centering
		\includegraphics[width=\textwidth]{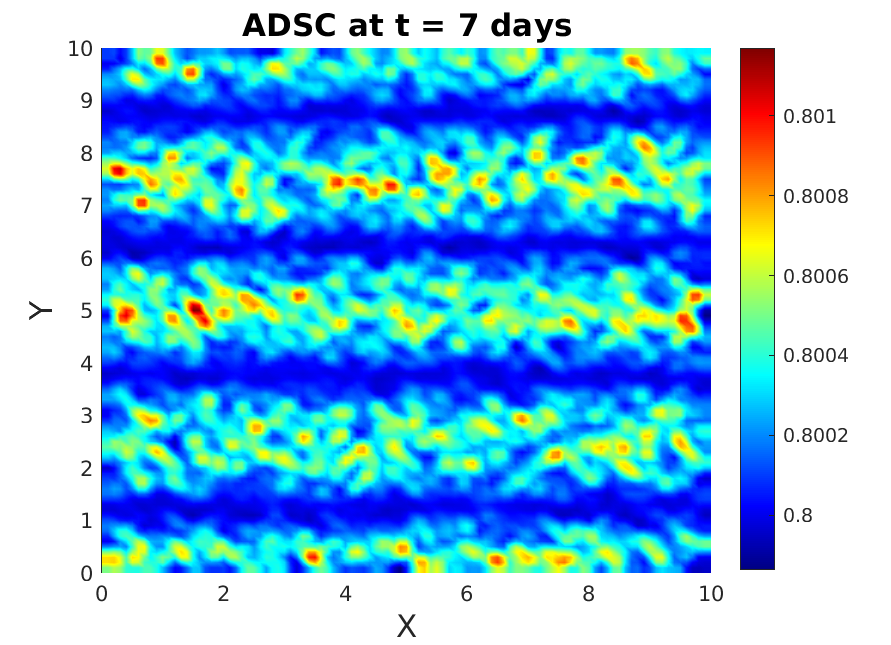}
		\caption{ADSCs at 7 days}
		\label{}
	\end{subfigure}
	\begin{subfigure}{0.32\textwidth}
		\centering
		\includegraphics[width=\textwidth]{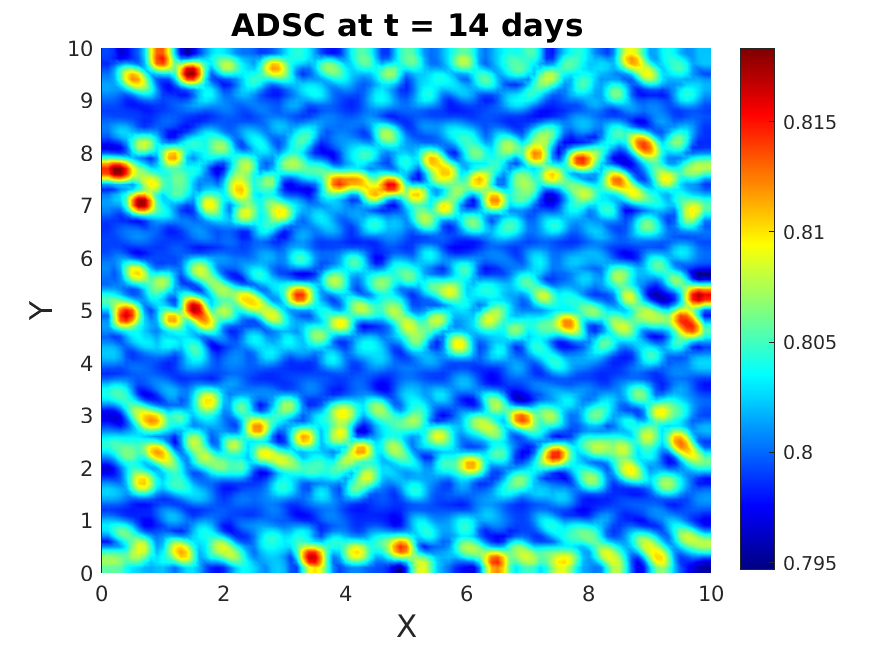}
		\caption{ADSCs at 14 days}
	\end{subfigure}	
	\begin{subfigure}{0.32\textwidth}
		\centering
		\includegraphics[width=\textwidth]{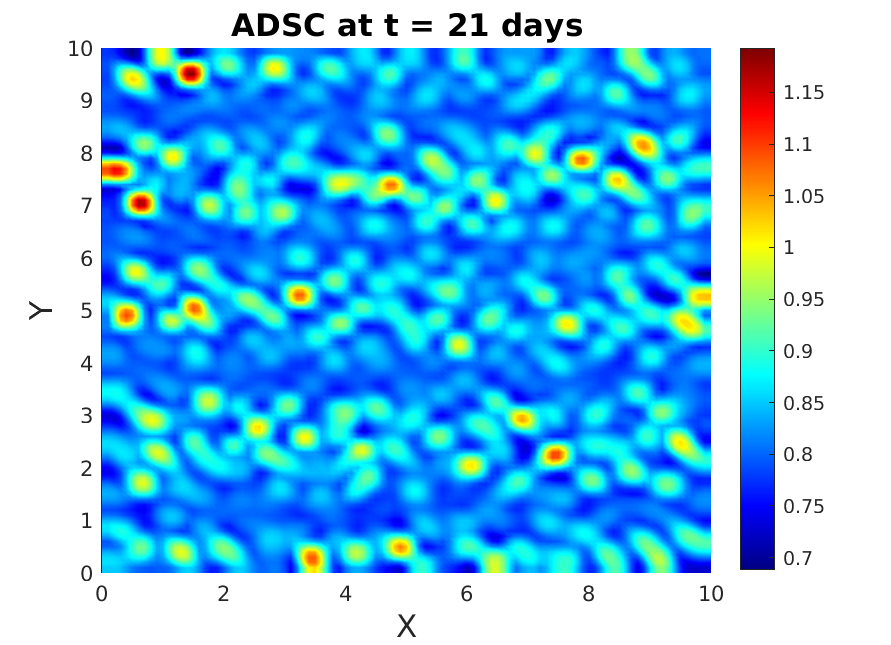}
		\caption{ADSCs at 21 days }
	\end{subfigure}	\\[1ex]
	\begin{subfigure}{0.32\textwidth}
		\centering
		\includegraphics[width=\textwidth]{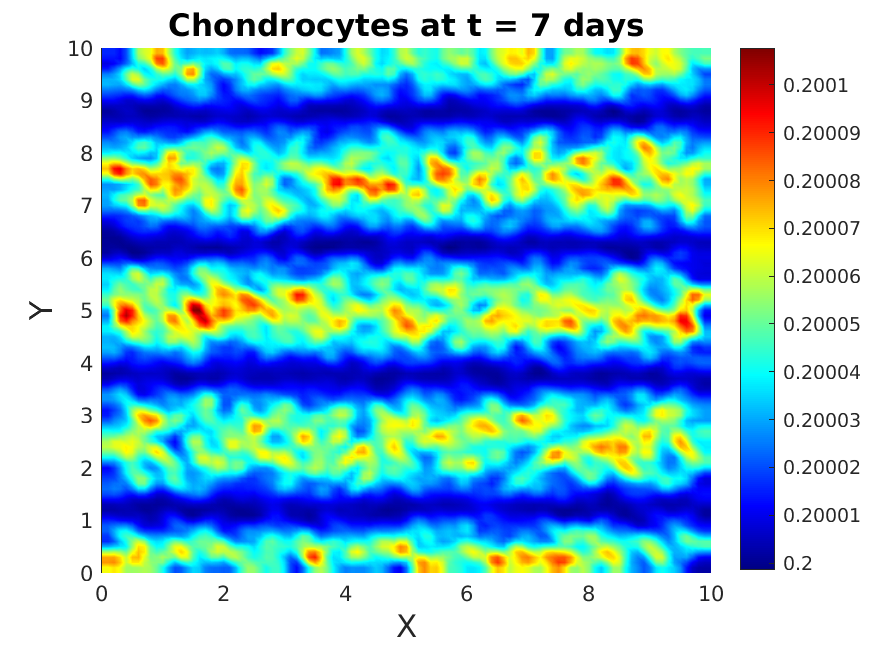}
		\caption{Chondrocytes at 7 days}
		\label{}
	\end{subfigure}
	\begin{subfigure}{0.32\textwidth}
		\centering
		\includegraphics[width=\textwidth]{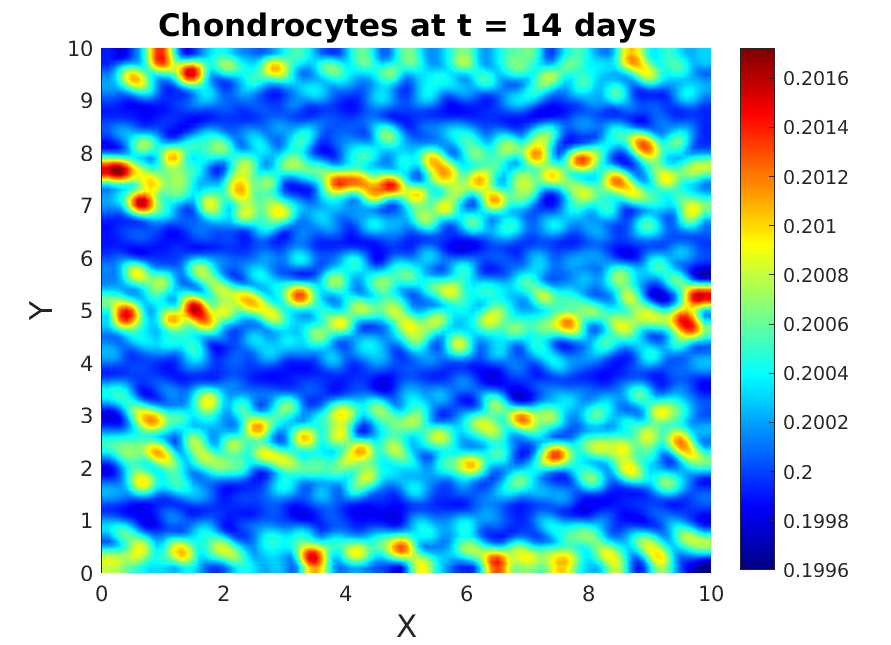}
		\caption{Chondrocytes at 14 days}
	\end{subfigure}	
	\begin{subfigure}{0.32\textwidth}
		\centering
		\includegraphics[width=\textwidth]{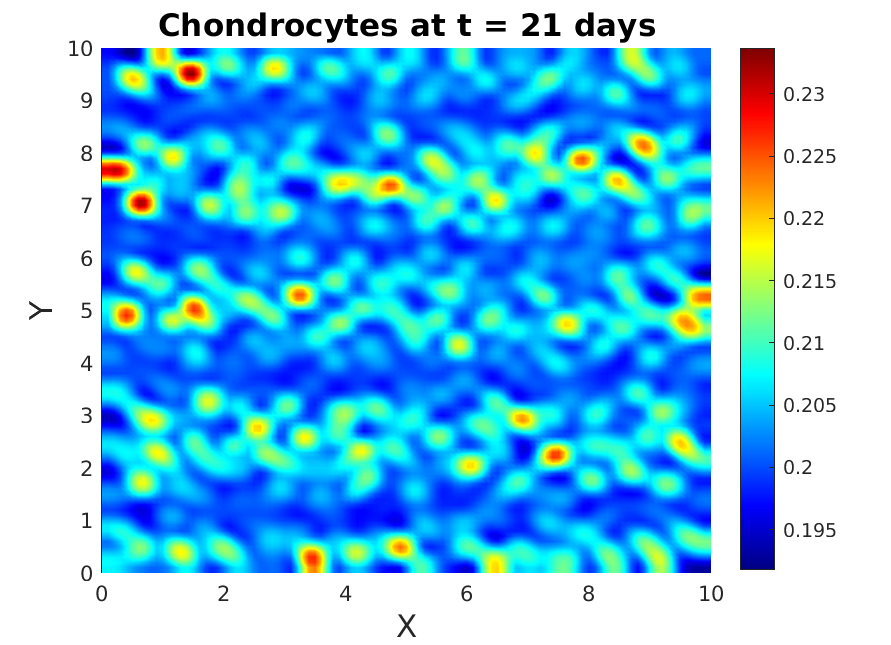}
		\caption{Chondrocytes at 14 days }
	\end{subfigure}\\[1ex]
	\begin{subfigure}{0.32\textwidth}
		\centering
		\includegraphics[width=\textwidth]{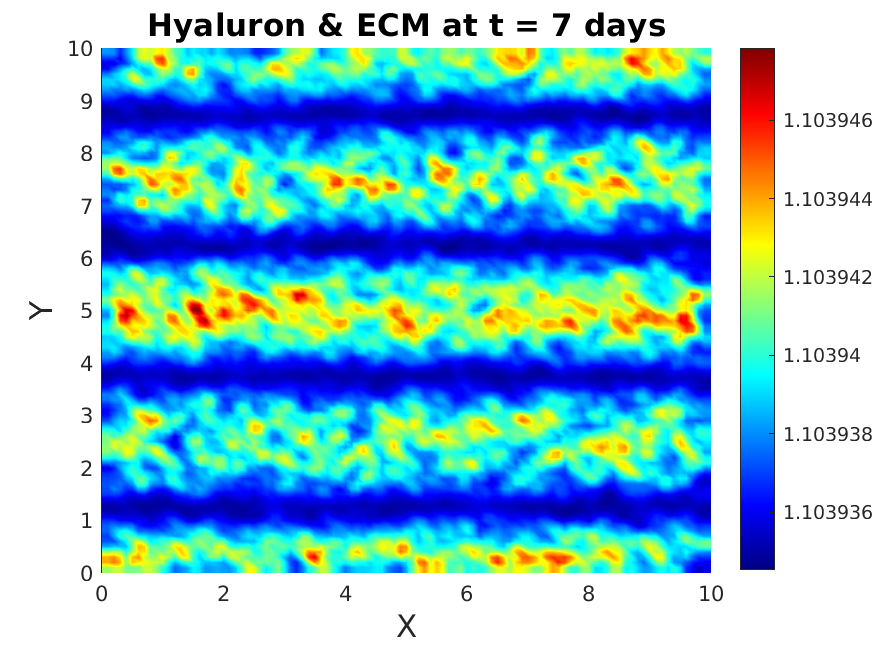}
		\caption{Hyaluron \& ECM at 7 days }
		\label{}
	\end{subfigure}
	\begin{subfigure}{0.32\textwidth}
		\centering
		\includegraphics[width=\textwidth]{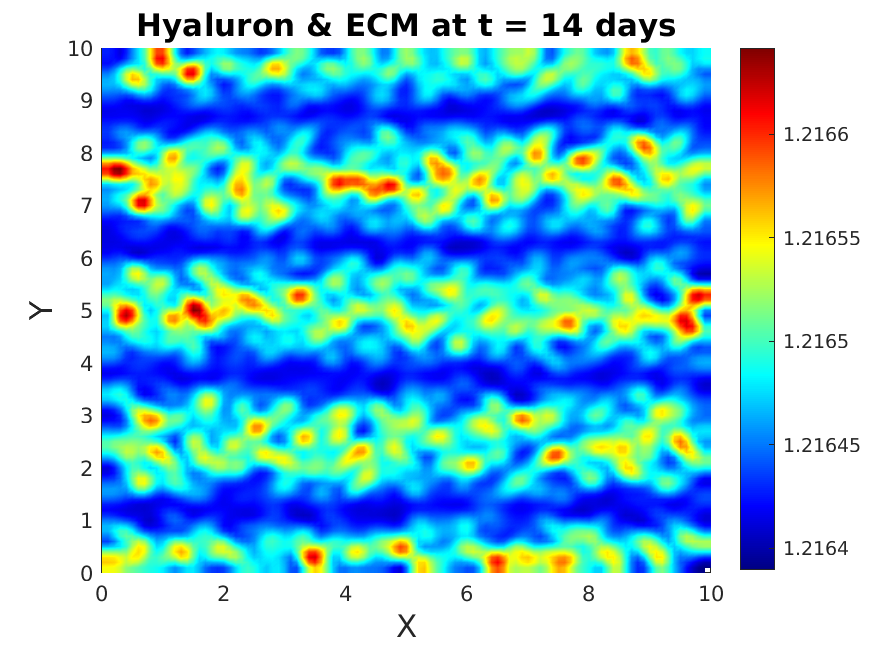}
		\caption{Hyaluron \& ECM at 14 days }
	\end{subfigure}	
	\begin{subfigure}{0.32\textwidth}
		\centering
		\includegraphics[width=\textwidth]{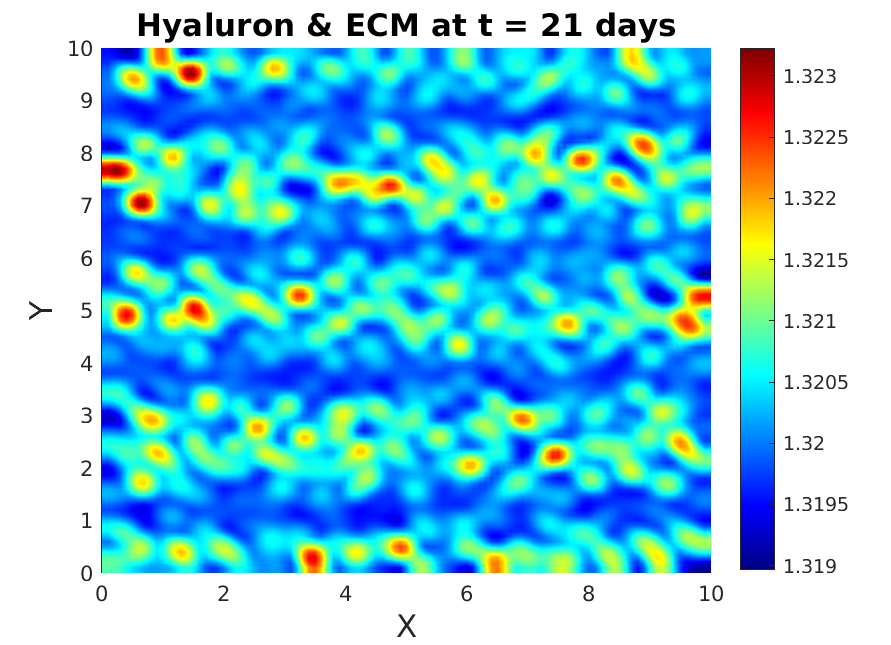}
		\caption{Hyaluron \& ECM at 21 days }
	\end{subfigure}
	\caption[]{ADSC, chondrocyte, and hyaluron/ECM density at 7, 14, and 21 days, initial conditions \eqref{eq:ICs-2D-cos}, $b>b_c$ (here $b=3.7$)}\label{fig:7}
\end{figure}

\appendix

%\subsection*{Appendix}
\section{Appendix}

\noindent	
The subsequent lemma closely follows Lemma A in the appendix of \cite{pang2017global}:
\begin{lemma}
	\label{lem_app}
	Assuming $0 < T < 1$, $c_{2j} \in C^{\frac{6}{5}, \frac{3}{5}}(Q_T)$, $c_{2j} \geq 0$, $h_j \in C(0, T; W^{1, 5}(\Omega))$ and $z_j \in C^{1, 0}(\bar{Q}_T) (j = 1, 2)$ and $\|z_j\|_{C^{1, 0}(\bar{Q}_T)} \leq M, \|h_j\|_{ C(0, T; W^{1, 5}(\Omega))} \leq M$ and $\|c_{2j}\|_{C^{\frac{6}{5}, \frac{3}{5}}(Q_T)} \leq M$. Then the solutions $\bar{h}_1$ and $\bar{h}_2$ of the ordinary differential equations
	\begin{equation}
		\label{app1}
		\partial_t \bar{h}_{j} = - \gamma_1 \bar{h}_j c_{2j} + \gamma_2\frac{c_{2j}}{K_{c_2} + c_{2j}}, \quad h_j(x, 0) = h_0(x), \quad j = 1, 2
	\end{equation}
	satisfy
	\begin{equation}
		\label{app3}
		\|\bar{h}_1 - \bar{h}_2\|_{C(0, T; W^{1, 5}(\Omega))} \leq T C(M) (\|(c_{21} - c_{22})\|_{C^{1, 0}(Q_T)} + \|z_1 - z_2\|_{C^{1, 0}(Q_T)} + \|h_1 - h_2\|_{C(0, T; W^{1,5}(\Omega))} ),
	\end{equation}
	where $C(M)$ is a constant depending only on $M$.
\end{lemma}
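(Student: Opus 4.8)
The plan is to estimate $\bar h_1-\bar h_2$ and $\nabla\bar h_1-\nabla\bar h_2$ directly from the closed‑form solution of the linear ODE \eqref{app1}. Solving with the integrating factor $\exp\!\big(\gamma_1\int_0^t c_{2j}\big)$ gives, for $j=1,2$,
\begin{equation*}
\bar h_j(x,t)=h_0(x)\,e^{-\gamma_1\int_0^t c_{2j}(x,s)\,ds}+\gamma_2\int_0^t e^{-\gamma_1\int_s^t c_{2j}(x,\sigma)\,d\sigma}\,\frac{c_{2j}(x,s)}{K_{c_2}+c_{2j}(x,s)}\,ds,
\end{equation*}
and differentiating in $x$ yields the companion expression already displayed in \eqref{nabh}, now with every datum carrying the index $j$. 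Three structural facts are used repeatedly: since $c_{2j}\ge0$, every exponential factor above is bounded by $1$; since $\|c_{2j}\|_{C^{6/5,3/5}(Q_T)}\le M$, both $c_{2j}$ and $\nabla c_{2j}$ are bounded on $\bar Q_T$ by $M$; and $\|h_0\|_{C^{2+\omega}(\bar\Omega)}$ is a fixed finite constant. Finally, observe that \eqref{app1} does not involve $z_j$ or $h_j$ at all, so it suffices to prove \eqref{app3} with $\|c_{21}-c_{22}\|_{C^{1,0}(Q_T)}$ alone on the right‑hand side; the stated inequality then follows because the two extra summands are nonnegative, and the hypotheses on $z_j,h_j$ are not needed.

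First I would bound $\|\bar h_1(\cdot,t)-\bar h_2(\cdot,t)\|_{L^\infty(\Omega)}$ uniformly in $t\in(0,T)$. Subtracting the two solution formulas and telescoping every product through $A_1B_1-A_2B_2=(A_1-A_2)B_1+A_2(B_1-B_2)$, only two kinds of difference appear: differences of exponentials, handled by $|e^{-a}-e^{-b}|\le|a-b|$ with $|a-b|\le\gamma_1 t\,\|c_{21}-c_{22}\|_{C^0(Q_T)}$; and $\big|\tfrac{c_{21}}{K_{c_2}+c_{21}}-\tfrac{c_{22}}{K_{c_2}+c_{22}}\big|\le\tfrac{1}{K_{c_2}}\,|c_{21}-c_{22}|$, because $\xi\mapsto\xi/(K_{c_2}+\xi)$ is globally Lipschitz with constant $1/K_{c_2}$ on $[0,\infty)$. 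Each resulting term carries an explicit factor $t$ — either from the exponential‑difference estimate or from the outer $\int_0^t ds$ — so $\|\bar h_1(\cdot,t)-\bar h_2(\cdot,t)\|_{L^\infty(\Omega)}\le t\,C(M)\,\|c_{21}-c_{22}\|_{C^0(Q_T)}$, hence the same in $L^5(\Omega)$ up to the factor $|\Omega|^{1/5}$.

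Next I would bound $\|\nabla\bar h_1(\cdot,t)-\nabla\bar h_2(\cdot,t)\|_{L^5(\Omega)}$, term by term from \eqref{nabh}. The extra factors now occurring are $\nabla h_0$ (bounded in $L^\infty$), $\nabla c_{2j}$ and the time integrals $\int_0^s\nabla c_{2j}\,dw$ (bounded in $L^5(\Omega)$ by $s\,|\Omega|^{1/5}M$ via Minkowski's integral inequality), and $\nabla\!\big(\tfrac{c_{2j}}{K_{c_2}+c_{2j}}\big)=\tfrac{K_{c_2}}{(K_{c_2}+c_{2j})^2}\,\nabla c_{2j}$. Telescoping each product as before, every surviving difference is one of: $\|e^{-a_1}-e^{-a_2}\|_{L^\infty}\le\gamma_1 t\,\|c_{21}-c_{22}\|_{C^0}$; $\|\nabla c_{21}-\nabla c_{22}\|_{L^5}\le|\Omega|^{1/5}\,\|c_{21}-c_{22}\|_{C^{1,0}(Q_T)}$; or $\big\|\tfrac{K_{c_2}}{(K_{c_2}+c_{21})^2}-\tfrac{K_{c_2}}{(K_{c_2}+c_{22})^2}\big\|_{L^\infty}\le C\,\|c_{21}-c_{22}\|_{C^0}$ (again Lipschitzness of a smooth bounded function of $c_{2j}\ge0$), each multiplied by $L^\infty$‑ or $L^5$‑bounded factors; and as before each term inherits an overall factor $t$. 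Hence $\|\nabla\bar h_1(\cdot,t)-\nabla\bar h_2(\cdot,t)\|_{L^5(\Omega)}\le t\,C(M)\,\|c_{21}-c_{22}\|_{C^{1,0}(Q_T)}$. Adding the two estimates, taking the supremum over $t\in(0,T)$ and using $t\le T<1$ gives \eqref{app3}.

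The only genuinely laborious part is the bookkeeping of the roughly a dozen summands arising from \eqref{nabh}: in each one must check that, after the product is telescoped, (i) the single ``difference'' factor left over is one of the three controlled quantities above, and (ii) an overall factor of $t$ is present, either because the term sits under the outer $ds$‑integral or because it is itself an exponential‑difference factor. No Gronwall argument, compactness, or regularity beyond the stated hypotheses is required — the whole estimate is explicit, and essentially identical in structure to the one used to prove \eqref{hWbound} in Lemma \ref{local_existence}.
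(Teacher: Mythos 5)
Your argument is correct. The explicit solution formula, the telescoping identity $A_1B_1-A_2B_2=(A_1-A_2)B_1+A_2(B_1-B_2)$, the Lipschitz bounds $|e^{-a}-e^{-b}|\le |a-b|$ for $a,b\ge 0$ and $\bigl|\tfrac{\xi}{K_{c_2}+\xi}-\tfrac{\eta}{K_{c_2}+\eta}\bigr|\le \tfrac{1}{K_{c_2}}|\xi-\eta|$, and the fact that the $C^{\frac{6}{5},\frac{3}{5}}$ hypothesis controls both $c_{2j}$ and $\nabla c_{2j}$ in sup norm, do yield $t\,C(M)\|c_{21}-c_{22}\|_{C^{1,0}(Q_T)}$ for $\bar h_1-\bar h_2$ and its gradient in $L^5(\Omega)$, uniformly in $t\in(0,T)$; and enlarging the right-hand side by the nonnegative $z$- and $h$-differences is exactly what the paper does too. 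The one structural difference is where the subtraction happens: the paper first writes the ODE satisfied by the difference, $\partial_t(\bar h_1-\bar h_2)=w_1(\bar h_1-\bar h_2)+w_2$ with $w_1=-\gamma_1 c_{21}$, $w_2=(c_{21}-c_{22})\bigl\{-\gamma_1\bar h_2+\tfrac{\gamma_2 K_{c_2}}{(K_{c_2}+c_{21})(K_{c_2}+c_{22})}\bigr\}$ and zero initial value, and then applies variation of constants to this difference equation, reducing everything to bounding $w_1,\,w_2,\,\nabla w_1,\,\nabla w_2$. Because the difference has vanishing initial datum, every term of that representation (and of its gradient) sits under an $\int_0^t ds$, so the factor $T$ is automatic and no exponential-difference estimate is needed. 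Your route, subtracting the two closed-form solutions, retains the initial-datum terms $h_0\bigl(e^{-\gamma_1\int_0^t c_{21}}-e^{-\gamma_1\int_0^t c_{22}}\bigr)$ and their gradient analogues, so you must invoke $|e^{-a}-e^{-b}|\le|a-b|$ to extract $t$ there and track roughly a dozen telescoped summands; the paper's formulation buys lighter bookkeeping, while yours is marginally more self-contained in that it never introduces the auxiliary coefficients $w_1,w_2$. Both are rigorous and give the same estimate.
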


\begin{proof}
	We can directly have the following ODE from \eqref{app1}
	\begin{equation}
		\label{app2}
		\partial_t (\bar{h}_1 - \bar{h}_2) = w_1(\bar{h}_1 - \bar{h}_2) + w_2, \quad \bar{h}_1(x, 0) - \bar{h}_2(x, 0) = 0
	\end{equation}
	where $w_1 = -\gamma_1 c_{21}, ~ w_2 = (c_{21} - c_{22})\left\{- \gamma_1 \bar{h}_2 + \frac{\gamma_2 K_{c_1}}{(K_{c_1} + c_{21})(K_{c_1} + c_{22})}\right\}$. Solving \eqref{app2}
	\begin{equation*}
		(\bar{h}_1 - \bar{h}_2)(x, t) = \int_0^t e^{\int_s^t w_1(x, \tau) d \tau} w_2(x, s)ds
	\end{equation*}
	and
	\begin{equation*}
		(\nabla \bar{h}_1 - \nabla \bar{h}_2)(x, t) = \int_0^t e^{\int_s^t w_1(x, \tau) d \tau} \nabla w_2(x, s)ds + \int_0^t e^{\int_s^t w_1(x, \tau)d\tau}w_2(x, s) \int_s^t \nabla w_1(x, \tau) d\tau ds.
	\end{equation*}
	We can find a $C(M)$ such that, $\|w_1\|_{C(Q_T)} \leq C(M)$, also
	\begin{align*}
		& \|\nabla w_1\|_{C(0, T; L^5(\Omega))} = \gamma_1 \|\nabla c_{21}\|_{C(0, T; L^5(\Omega))} \leq C(M),                                                                      \\[5pt]
		\|w_2\|_{C(Q_T)} & = \left\|- \gamma_1 \bar{h}_2(c_{21} - c_{22}) + \tfrac{\gamma_2 K_{c_1}(c_{21} - c_{22})}{(K_{c_1} + c_{21})(K_{c_1} + c_{22})}\right\|_{C(Q_T)} \leq C(M)\|(c_{21} - c_{22})\|_{C(Q_T)}       \\[5pt]
		& \leq C(M)(\|(c_{21} - c_{22})\|_{C(Q_T)} + \|z_1 - z_2\|_{C(Q_T)} + \|h_1 - h_2\|_{C(0, T; W^{1, 5}(\Omega))} )
	\end{align*}
	and
	\begin{align*}
		\|\nabla w_2\|_{C(0, T; L^5(\Omega))} = \Big\|& - \gamma_1 (c_{21} - c_{22}) \nabla \bar{h}_2 - \gamma_1 \bar{h}_2 (\nabla c_{21} - \nabla c_{22}) + \tfrac{\gamma_2 K_{c_1}}{(K_{c_1} + c_{21})(K_{c_1} + c_{22})}(\nabla c_{21} - \nabla c_{22}) \\
		& - \tfrac{\gamma_2 K_{c_1}(c_{21}-c_{22})}{(K_{c_1} + c_{21})^2(K_{c_1} + c_{22})}\nabla c_{21} -\tfrac{\gamma_2 K_{c_1}(c_{21}-c_{22})}{(K_{c_1} + c_{21})(K_{c_1} + c_{22})^2}\nabla c_{22}\Big\|_{C(0, T; L^5(\Omega))}\\
		& \leq C(M)(\|(c_{21} - c_{22})\|_{C^{1, 0}(Q_T)} \\
		& \leq C(M)(\|(c_{21} - c_{22})\|_{C^{1, 0}(Q_T)} + \|z_1 - z_2\|{C^{1, 0}(Q_T)} + \|h_1 - h_2\|_{C(0, T; W^{1, 5}(\Omega))} )
	\end{align*}
	Now, as in \cite{pang2017global}, we can also have the following two estimates
	\begin{align*}
		\|\bar{h}_1(\cdot, t) - \bar{h}_2(\cdot, t)\|_{L^5(\Omega)} \leq T C(M) (\|(c_{21} - c_{22})\|_{C(Q_T)} + \|z_1 - z_2\|_{C(Q_T)} + \|h_1 - h_2\|_{C(0, T;  W^{1, 5}(\Omega))} )
	\end{align*}
	and
	\begin{align*}
		\|\nabla \bar{h}_1(\cdot, t) - \nabla \bar{h}_2(\cdot, t)\|_{L^5(\Omega)} \leq T C(M) \big(\|(c_{21} - c_{22})\|_{C^{1, 0}(Q_T)} & + \|z_1 - z_2\|_{C^{1, 0}(Q_T)}\\
		& + \|h_1 - h_2\|_{C(0, T;  W^{1, 5}(\Omega))} \big) .
	\end{align*}
\end{proof}

		\subsection*{Acknowledgement} CS and NM acknowledge funding by the German Research Foundation DFG within the SPP 2311. SCM was funded by DAAD, MathApp, and the Nachwuchsring at RPTU Kaiserslautern-Landau.
	%	\newpage
%		\bibliographystyle{unsrt}
%		\bibliography{Bibliography}
%		
\phantomsection
\printbibliography	

%\subsection{Appendix}

	\end{document}